\theoremstyle{plain}
  \newtheorem{thm}{Theorem}[section]
  \newtheorem{lem}[thm]{Lemma}
  \newtheorem{cor}[thm]{Corollary}
  \newtheorem{prop}[thm]{Proposition}
  \newtheorem{conj}[thm]{Conjecture}
  \newtheorem*{obs*}{Observation}
\theoremstyle{definition}
  \newtheorem{defn}[thm]{Definition}
\theoremstyle{remark}
  \newtheorem{rem}[thm]{Remark}
\newcommand{\Z}{\mathbb{Z}}
\newcommand{\C}{\mathbb{C}}
\newcommand{\R}{\mathbb{R}}
\newcommand{\Vol}{\operatorname{Vol}}
\newcommand{\CS}{\operatorname{CS}}
\newcommand{\Li}{\operatorname{Li}}
\newcommand{\Hom}{\operatorname{Hom}}
\renewcommand{\L}{\mathcal{L}}
\newcommand{\cs}{\operatorname{cs}}
\renewcommand{\sl}{\mathfrak{sl}}
\newcommand{\SL}{\rm{SL}}
\newcommand{\Tr}{\operatorname{Tr}}
\newcommand{\Tor}{\mathbb{T}}
\newcommand{\arccosh}{\operatorname{arccosh}}
\newcommand{\Res}{\operatorname{Res}}
\renewcommand{\Re}{\operatorname{Re}}
\renewcommand{\Im}{\operatorname{Im}}
\renewcommand{\i}{\sqrt{-1}}
\numberwithin{equation}{section}
\begin{document}
\title[The colored Jones polynomial of the figure-eight knot]
{On the asymptotic behavior of the colored Jones polynomial of the figure-eight knot associated with a real number}
\author{Hitoshi Murakami}
\address{
Graduate School of Information Sciences,
Tohoku University,
Aramaki-aza-Aoba 6-3-09, Aoba-ku,
Sendai 980-8579, Japan}
\email{hitoshi@tohoku.ac.jp}
\author{Anh T.~Tran}
\address{
Department of Mathematical Sciences, The University of Texas at Dallas, Richardson,
TX 75080, USA}
\email{att140830@utdallas.edu}
\date{\today}
\dedicatory{Dedicated to the memory of Toshie Takata}
\begin{abstract}
We study the asymptotic behavior of the $N$-dimensional colored Jones polynomial evaluated at $\exp(\xi/N)$ for a real number $\xi$ greater than a certain constant.
We prove that, from the asymptotic behavior, we can extract the $\SL(2;\C)$ Chern--Simons invariant and the Reidemeister torsion twisted by the adjoint action both associated with a representation determined by $\xi$.
\end{abstract}
\keywords{colored Jones polynomial, figure-eight knot, volume conjecture, Chern--Simons invariant, Reidemeister torsion}
\subjclass{Primary 57K16 57K14 57K10}
\thanks{The first author is supported by JSPS KAKENHI Grant Numbers JP16H03927, JP22H01117, JP20K03601, JP20K03931.
The second author is supported by grants from the Simons Foundation (\#354595 and \#708778).}
\maketitle
\section{Introduction}\label{sec:introduction}
Let $N\ge2$ be an integer.
\par
In \cite{Kashaev:MODPLA95}, R.~Kashaev introduced a link invariant $\langle K\rangle_{N}$ for a knot $K$ in the three-sphere $S^3$ by using the so-called quantum dilogarithm.
In \cite{Kashaev:LETMP97}, he studied its asymptotic behavior when $N\to\infty$ for several hyperbolic knots and conjectured that it would determine its hyperbolic volume for any {\em hyperbolic} knot, where a knot is called hyperbolic if its complement possesses a complete hyperbolic structure with finite volume.
More precisely he conjectured
\begin{conj}[Kashaev's conjecture]
Let $H$ be a {\em hyperbolic} knot in $S^3$.
Then the following equality holds.
\begin{equation}\label{eq:Kashaev}
  \lim_{N\to\infty}
  \frac{\log\left|\langle H\rangle_{N}\right|}{N}
  =
  \frac{\operatorname{V}\left(S^3\setminus{H}\right)}{2\pi},
\end{equation}
where $\operatorname{V}\left(S^3\setminus{H}\right)$ is the hyperbolic volume.
\end{conj}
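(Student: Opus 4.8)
The plan is to establish the conjecture by a steepest-descent analysis of a finite state-integral model for $\langle H\rangle_{N}$ built from an ideal triangulation of $S^3\setminus H$. First, by the theorem of H.~Murakami and J.~Murakami one has $\langle H\rangle_{N}=J_{N}\bigl(H;\exp(2\pi\i/N)\bigr)$, where $J_{N}$ is the $N$-dimensional colored Jones polynomial normalized to take the value $1$ on the unknot, so that \eqref{eq:Kashaev} becomes the analytic assertion
\begin{equation*}
  \lim_{N\to\infty}\frac{2\pi}{N}\log\Bigl|J_{N}\bigl(H;\exp(2\pi\i/N)\bigr)\Bigr|=\Vol\bigl(S^3\setminus H\bigr),
\end{equation*}
and one works from this reformulation.

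Next, fix an ideal triangulation $\mathcal{T}$ of $S^3\setminus H$ with $n$ tetrahedra. Starting from Kashaev's $R$-matrix expression for $\langle H\rangle_{N}$, collect the quantum factorials at $q=\exp(2\pi\i/N)$ into Faddeev's noncompact quantum dilogarithm $\Phi_{b}$ with $b^{2}=1/N$, and turn the resulting state sum into an integral by a Fourier--Gauss transform, as in the Andersen--Kashaev formulation; this presents $\langle H\rangle_{N}$, up to an explicitly controlled prefactor, as a finite-dimensional integral $\int_{\mathcal{C}}\prod_{j=1}^{n}\Phi_{b}^{\varepsilon_{j}}\bigl(\ell_{j}(\mathbf{z})\bigr)\,e^{\pi\i\,Q(\mathbf{z})}\,d\mathbf{z}$ with $\varepsilon_{j}\in\{\pm1\}$, the $\ell_{j}$ affine, and $Q$ a quadratic form carrying the Neumann--Zagier data of $\mathcal{T}$. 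Inserting the uniform expansion $\log\Phi_{b}(x)=\frac{1}{2\pi\i\,b^{2}}\Li_2\!\bigl(-e^{2\pi x}\bigr)+O(1)$ as $b\to0$, valid away from the poles and zeros of $\Phi_{b}$, converts the integrand into $\exp\!\bigl(\frac{N}{2\pi\i}\mathcal{V}_{\mathcal{T}}(\mathbf{z})+O(1)\bigr)$, where the potential $\mathcal{V}_{\mathcal{T}}$ — a sum of dilogarithms plus a quadratic term — has critical-point equations that, together with the completeness condition at the cusp, are exactly Thurston's gluing equations for $\mathcal{T}$.

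Now analyze this Laplace-type integral. Thurston's equations have the complete hyperbolic structure as a distinguished (``geometric'') solution $\mathbf{z}_{0}$, and a standard computation identifies the corresponding critical value $\frac{1}{2\pi\i}\mathcal{V}_{\mathcal{T}}(\mathbf{z}_{0})$ with a branch of the complexified volume $\frac{1}{2\pi}\bigl(\Vol(S^3\setminus H)+\i\,\CS(S^3\setminus H)\bigr)$; in particular
\begin{equation*}
  \Re\!\left(\frac{1}{2\pi\i}\,\mathcal{V}_{\mathcal{T}}(\mathbf{z}_{0})\right)=\frac{\Vol\bigl(S^3\setminus H\bigr)}{2\pi}.
\end{equation*}
If $\mathcal{C}$ can be deformed to a steepest-descent contour through $\mathbf{z}_{0}$, then the Hessian of $\mathcal{V}_{\mathcal{T}}$ at $\mathbf{z}_{0}$ — nondegenerate and computing, up to normalization, the adjoint Reidemeister torsion, in keeping with the refined conjecture studied in the present paper — makes the stationary-phase estimate applicable: the $\mathbf{z}_{0}$-contribution has size $N^{c}\exp\!\bigl(\frac{N}{2\pi}\Vol(S^3\setminus H)\bigr)$, which yields $\liminf_{N}\frac{2\pi}{N}\log|J_{N}|\ge\Vol(S^3\setminus H)/2\pi$, while bounding the integrand on the same contour by $\exp\!\bigl(\frac{N}{2\pi}\Vol(S^3\setminus H)+o(N)\bigr)$ gives the matching upper bound.

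The real difficulty is concentrated in the last step. One must control the error in the $\Phi_{b}$-expansion uniformly, including where $\mathcal{C}$ approaches the poles or zeros of $\Phi_{b}$, which constrains the choice of $\mathcal{T}$ and of $\mathcal{C}$; and, more seriously, because the state sum is an alternating sum, locating the geometric saddle does not by itself force the growth rate $\exp(\frac{N}{2\pi}\Vol)$ — one must prove that $\mathcal{C}$ can be pushed onto a steepest-descent contour on which $\mathbf{z}_{0}$ strictly dominates every other critical point of $\mathcal{V}_{\mathcal{T}}$ (in particular every non-geometric solution of the gluing equations), so that no destructive cancellation lowers the exponential order. This domination statement is exactly what is missing in general: the program has been completed only for special families — the figure-eight knot $4_{1}$, the knot $5_{2}$, certain twist and double-twist knots, and a handful of links — by exploiting explicit hypergeometric formulas together with Ohtsuki's Poisson-summation technique, and the results of the present paper on $4_{1}$ with a real parameter belong to the same circle of ideas. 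A parallel route via WKB analysis of the $q$-difference equation satisfied by $N\mapsto J_{N}(H;q)$, whose classical limit is the $A$-polynomial of $H$, reduces the conjecture to singling out the geometric branch of that curve and meets the same branch-domination obstacle. The proposal thus furnishes a complete strategy, but a proof valid for every hyperbolic $H$ awaits this uniform domination estimate for the geometric saddle.
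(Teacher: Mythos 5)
The statement you were asked about is a \emph{conjecture}: the paper does not prove it, and it remains open for general hyperbolic knots. Your text is therefore correctly read not as a proof but as a survey of the standard strategy, and to your credit you say so explicitly in the last paragraph. The genuine gap is exactly the one you name: after rewriting $\langle H\rangle_{N}=J_{N}\bigl(H;e^{2\pi\i/N}\bigr)$ (Murakami--Murakami) and expressing the invariant as a Laplace-type integral with potential whose critical points are solutions of Thurston's gluing equations, one must (a) justify deforming the integration contour onto a steepest-descent contour through the geometric solution $\mathbf{z}_{0}$, with uniform control of the $O(1)$ errors in the quantum-dilogarithm expansion near its poles and zeros, and (b) show that on that contour no other critical point (no non-geometric solution of the gluing equations, and no boundary contribution) produces an exponential contribution of equal or larger real part that could cancel or dominate the geometric one. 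Neither step is known in general; an ideal triangulation need not even admit a positively oriented geometric solution, and the alternating signs in the state sum make cancellation a real threat. So the argument as written establishes nothing beyond the (known) reformulation in the first step.

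For comparison, what the present paper actually proves (Theorem~\ref{thm:main}) is a refined asymptotic for the figure-eight knot at a \emph{real} parameter $\xi$ with $|\xi|>\kappa$, and it succeeds precisely because the general obstacles disappear in that setting: Habiro's one-dimensional sum \eqref{eq:J_N_fig8} replaces a multi-dimensional state integral, the summand is rewritten via the quantum dilogarithm $T_{N}$, the limit potential $F$ is \emph{real-valued} on the real interval (Remark~\ref{rem:F_real}) with a unique interior maximum at $\varphi(\xi)/\xi$, and Ohtsuki's Poisson-summation criterion (Proposition~\ref{prop:Poisson}) plus the one-variable saddle point method then give the full asymptotic expansion with no competing saddles. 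If you want to turn your outline into mathematics, the honest scope is such special families; for arbitrary hyperbolic $H$ the ``uniform domination estimate for the geometric saddle'' you identify is the open problem, not a deferred lemma.
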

\par
Let $J_{N}(K;q)$ be the colored Jones polynomial associated with the $N$-dimensional irreducible representation of the Lie algebra $\sl(2;\C)$ \cite{Kirby/Melvin:INVEM1991,Reshetikhin/Turaev:INVEM1991}.
We normalize it so that $J_{N}(U;q)=1$ for the unknot $U$, and that $J_{2}(K;q)$ is the ordinary Jones polynomial \cite{Jones:BULAM31985}.
In \cite{Murakami/Murakami:ACTAM12001}, J.~Murakami and the first author proved that Kashaev's invariant $\langle K\rangle_{N}$ coincides with the $N$-dimensional colored Jones polynomial $J_{N}(K;q)$  evaluated at $q=e^{2\pi\i/N}$.
They also generalized Kashaev's conjecture for general knots:
\begin{conj}[Volume Conjecture]
For {\em any} knot $K\subset{S^3}$, we have
\begin{equation}\label{eq:VC}
  \lim_{N\to\infty}
  \frac{\log\left|J_{N}\left(K;e^{2\pi\i/N}\right)\right|}{N}
  =
  \frac{\Vol\left(S^3\setminus{K}\right)}{2\pi}.
\end{equation}
Here, $\Vol(S^3\setminus{K})$ is the simplicial volume, also known as the Gromov norm \cite{Gromov:INSHE82} {\rm(}see also \cite{Soma:INVEM1981}{\rm)}, which is normalized so that $\Vol\left(S^3\setminus{H}\right)=\operatorname{V}\left(S^3\setminus{H}\right)$ if $H$ is hyperbolic.
\end{conj}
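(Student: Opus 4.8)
The Volume Conjecture in the stated generality is a central open problem, so what I can offer is the standard reduction of it to the hyperbolic case, a complete treatment of the non-hyperbolic part, and a precise location of the real obstacle. \emph{Reduction to prime, then to hyperbolic, pieces.} Because the normalized colored Jones polynomial is multiplicative under connected sum, $J_N(K_1\#K_2;q)=J_N(K_1;q)\,J_N(K_2;q)$, the left-hand side of \eqref{eq:VC} is additive over connected summands; the Gromov norm is additive as well, so it is enough to treat prime knots. For a prime knot $K$, geometrization decomposes $S^3\setminus K$ along its canonical (JSJ) system of incompressible tori into hyperbolic pieces $M_1,\dots,M_r$ and Seifert-fibered pieces, and the Gromov norm vanishes on the latter and is additive over the former, so $\Vol(S^3\setminus K)=\sum_{j}\operatorname{V}(M_j)$. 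The goal is then the matching asymptotic identity $\lim_{N\to\infty}\frac1N\log\bigl|J_N(K;e^{2\pi\i/N})\bigr|=\sum_{j}\frac{\operatorname{V}(M_j)}{2\pi}$, to be obtained by transporting the asymptotics across the torus-gluings: when $K=P(C)$ is a satellite with companion $C$ and pattern $P$ in the solid torus, one expands $J_N(K;q)$ as a finite $\Z[q^{\pm1}]$-linear combination, determined by $P$, of colored Jones polynomials of $C$ with bounded colors, which reduces the estimate to the companion $C$ and to a colored version of \eqref{eq:VC} for the link $P\cup(\text{core})$ in $S^3$.

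\emph{The non-hyperbolic part.} For a torus knot $T$ the complement is Seifert fibered, $\Vol(S^3\setminus T)=0$, and the explicit evaluation of Kashaev's invariant $\langle T\rangle_N=J_N(T;e^{2\pi\i/N})$ (Kashaev--Tirkkonen) grows only polynomially in $N$; hence \eqref{eq:VC} holds with both sides zero. Feeding this into the satellite expansion and inducting on the number of JSJ pieces handles, at least in principle, every knot whose complement is a graph manifold, and reduces the general case to establishing the asymptotics contributed by the hyperbolic pieces.

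\emph{The hyperbolic case --- the main obstacle.} What is left is Kashaev's original conjecture \eqref{eq:Kashaev} for a hyperbolic knot $H$, and this is where essentially all the difficulty lies. The approach is to write $\langle H\rangle_N$ as a state sum or state integral assembled from quantum dilogarithms attached to an ideal triangulation of $S^3\setminus H$ and to carry out a saddle-point analysis as $N\to\infty$: the critical-point equations of the resulting potential should be Thurston's gluing and completeness equations, so the geometric critical point ought to contribute $\exp\!\bigl(\tfrac{N}{2\pi}\bigl(\operatorname{V}(S^3\setminus H)+\i\CS(S^3\setminus H)\bigr)\bigr)$ up to a subexponential prefactor governed by the adjoint Reidemeister torsion. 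Making this rigorous for a given $H$ requires (a) a sufficiently explicit finite formula for $\langle H\rangle_N$, (b) matching the geometric critical point with the complete hyperbolic structure (a form of the one-loop / AJ picture), and (c) deforming the summation or integration cycle onto a steepest-descent cycle through that critical point while bounding the subdominant contributions uniformly. No general mechanism for (a)--(c) is known: the conjecture has been confirmed only for finitely many hyperbolic knots, the figure-eight knot $4_1$ being the first and most thoroughly analyzed example --- together with the cases treated by Ohtsuki's asymptotic-expansion method --- which is exactly why the refined analysis of the present paper, at a real parameter $\xi$, is set up for $4_1$. A uniform solution would have to pass through the still-open quantum modularity conjecture of Garoufalidis--Zagier for the Kashaev invariant, or through a general ``state integral equals Chern--Simons'' theorem; supplying either of these is the genuine obstacle, and it lies beyond the methods developed here.
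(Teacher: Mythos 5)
This statement is stated in the paper as a \emph{conjecture}; the paper offers no proof of it (nor does one exist in the literature), and the paper's actual contribution, Theorem~\ref{thm:main}, concerns a different evaluation point ($q=e^{\xi/N}$ with $\xi$ real, $|\xi|>\kappa$) for the single knot $E$. So there is no proof in the paper to compare yours against, and you are right that the general statement is open. Your survey of the standard reduction is mostly accurate: multiplicativity of $J_N$ under connected sum together with additivity of the Gromov norm does reduce the problem to prime knots, and the torus-knot case (growth of order $N^{3/2}$, hence exponential growth rate zero) is indeed a theorem of Kashaev--Tirkkonen.

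One concrete error, however, sits exactly where you wave your hands: in the satellite step you assert that $J_N(P(C);q)$ is a finite $\Z[q^{\pm1}]$-combination of colored Jones polynomials of the companion $C$ ``with bounded colors.'' That is false. The cabling formula expresses $J_N(P(C);q)$ in terms of $J_k(C;q)$ with colors $k$ ranging up to roughly $wN$, where $w$ is the winding number of the pattern, and the coefficients oscillate, so massive cancellation can (and does) occur among terms each of which is individually exponentially large. This is precisely why the volume conjecture is \emph{not} known to be stable under taking satellites, even granting it for the companion and the pattern; the known partial results (e.g.\ for Whitehead doubles of certain torus knots) require separate delicate analysis. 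So your claimed reduction of graph-manifold complements ``by induction on JSJ pieces'' does not go through even in principle as stated, and the obstacle is not confined to the hyperbolic pieces. Your description of the hyperbolic case --- potential functions from quantum dilogarithms, critical points matching the gluing equations, the difficulty of justifying the steepest-descent deformation --- is a fair account of the state of the art and of what this paper's method (Poisson summation plus the saddle point method for the figure-eight knot) actually accomplishes in its special case.
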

\par
Kashaev's conjecture was complexified by T.~Takata, J.~Murakami, M.~Okamoto, Y.~Yokota and the first author in \cite{Murakami/Murakami/Okamoto/Takata/Yokota:EXPMA02} by dropping the absolute value symbol (and replacing $\langle H\rangle_{N}$ with $J_{N}\left(H;e^{2\pi\i/N}\right)$) in \eqref{eq:Kashaev}.
\begin{conj}[Complexification of Kashaev's conjecture]
For a hyperbolic knot $H$, we have
\begin{equation}\label{eq:CVC}
  \lim_{N\to\infty}
  \frac{\log J_{N}\left(H;e^{2\pi\i/N}\right)}{N}
  =
  \frac{\operatorname{V}\left(S^3\setminus{H}\right)+\i\CS(S^3\setminus{H})}{2\pi},
\end{equation}
where $\CS$ is the $SO(3)$ Chern--Simons invariant of $H$ associated with the Levi-Civita connection \cite{Meyerhoff:LMSLN112}.
\end{conj}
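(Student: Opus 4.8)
This statement is, at present, a conjecture; it is known only for the figure-eight knot $4_1$ (the subject of this paper) and for a small number of other census knots, so I describe the strategy that succeeds in those cases and indicate at the end why the general case is out of reach. The plan is a multidimensional saddle-point analysis of an integral representation of Kashaev's invariant. First I would rewrite $\langle H\rangle_N=J_N\!\bigl(H;e^{2\pi\i/N}\bigr)$ as a finite $r$-fold sum coming from an ideal triangulation of $S^3\setminus H$; for $4_1$ one may take the classical one-dimensional expression $\sum_{k=0}^{N-1}\prod_{j=1}^{k}\bigl|1-e^{2\pi\i j/N}\bigr|^{2}$. Using Faddeev's quantum dilogarithm and its $N\to\infty$ asymptotic expansion, each summand becomes $\exp\!\bigl(N\,\Phi(\mathbf{t})+O(\log N)\bigr)$ for a potential function $\Phi$ built from the classical dilogarithm $\Li_2$, whose critical-point equations $\partial\Phi/\partial t_i=0$ coincide, after exponentiation, with Thurston's gluing equations in the Neumann--Zagier normalization; their solutions thus correspond to the $\SL(2;\C)$ representations of $\pi_1(S^3\setminus H)$ compatible with the triangulation.

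Second, I would replace the sum by an integral — via the Poisson summation formula, retaining only the central Fourier mode, or by direct Euler--Maclaurin-type estimates — so that $\langle H\rangle_N\doteq\int\exp\!\bigl(N\,\Phi(\mathbf{t})\bigr)\,d\mathbf{t}$ up to sub-exponential factors. Third, I would single out the critical point $\mathbf{t}_{0}$ belonging to the discrete faithful (geometric) representation, deform the cycle of integration onto a steepest-descent cycle through $\mathbf{t}_{0}$, and apply the saddle-point method; the dominant contribution is $\exp\!\bigl(N\,\Phi(\mathbf{t}_{0})\bigr)$, so
\[
  \lim_{N\to\infty}\frac{\log J_N\!\bigl(H;e^{2\pi\i/N}\bigr)}{N}=\Phi(\mathbf{t}_{0}).
\]
Finally, by Neumann's theorem relating the dilogarithm evaluated on a solution of the gluing equations to the complex volume of the associated flat connection, one gets $\Phi(\mathbf{t}_{0})=\bigl(\operatorname{V}(S^3\setminus H)+\i\,\CS(S^3\setminus H)\bigr)/(2\pi)$ modulo the standard $\pi^{2}\Z$-indeterminacy of $\CS$, which is precisely the right-hand side of \eqref{eq:CVC}.

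The hard part is the analytic bookkeeping in the second and third steps, and — for a general knot — the very existence of a presentation on which to perform it. The quantum-dilogarithm asymptotics are uniform only away from the lattice of poles $\mathbf{t}\in\Z^{r}$, so the part of the sum near those points must be estimated separately; one must show that the non-central Poisson modes and the portion of the steepest-descent cycle away from $\mathbf{t}_{0}$ have strictly smaller exponential rate; and one must exhibit an explicit homotopy carrying the cycle of summation onto the descent cycle without crossing a pole or a competing critical point with larger $\Re\Phi$. For $4_1$ this is feasible because the relevant potential has a single dominant critical value, but for an arbitrary hyperbolic knot neither a sufficiently controlled multi-sum nor such a deformation is available, which is why the conjecture remains open in general.
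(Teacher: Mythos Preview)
The paper does not prove this statement: it is explicitly labeled a \emph{conjecture} and is simply recorded in the introduction as background and motivation, with no argument offered. You recognize this immediately in your first sentence, so there is no discrepancy to flag.

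Your outline of the saddle-point strategy for the cases where the conjecture is known is accurate and, in spirit, matches the machinery the paper deploys for its actual main result (Theorem~\ref{thm:main}): express the invariant via a quantum dilogarithm, pass from sum to integral by Poisson summation, and apply the saddle-point method at the critical point of a dilogarithmic potential. Note, however, that the paper's Theorem~\ref{thm:main} concerns the figure-eight knot at a \emph{real} parameter $\xi>\kappa$, not at $\xi=2\pi\i$, so even within this paper the specific statement you were asked about is left open and your remarks about the general obstructions are well taken.
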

\par
More detailed asymptotic formulas are known for several knots \cite{Andersen/Hansen:JKNOT2006,Ohtsuki:QT2016,Ohtsuki/Yokota:MATPC2018}:
\begin{equation*}
  \langle H\rangle_{N}
  \underset{N\to\infty}{\sim}
  \omega(H)N^{3/2}\exp\left(\frac{N}{2\pi}\operatorname{CV}(H)\right),
\end{equation*}
where $\operatorname{CV}(H):=\Vol\left(S^3\setminus{H}\right)+\i\CS(S^3\setminus{H})$ is the complex volume and $2\i\omega(H)^2$ is the adjoint (cohomological) Reidemeister torsion twisted by the holonomy representation of $\pi_1(S^3\setminus{H})$ to $\SL(2;\C)$ \cite{Ohtsuki/Takata:GEOTO2015}.
\par
It was also generalized by replacing $2\pi\i$ with another complex number $\xi$ \cite{Dimofte/Gukov/Lenells/Zagier:CNTP2010,Murakami:JTOP2013}.
\begin{conj}[Generalized Volume Conjecture]
Let $\xi\ne2\pi\i$ be a complex number close to $2\pi\i$, and $H$ a hyperbolic knot.
Then we have
\begin{equation}\label{eq:GVC}
  J_{N}\left(H;e^{\xi/N}\right)
  \underset{N\to\infty}{\sim}
  \frac{C}{\sinh(\xi)}\times\tau(\xi)^{1/2}\left(\frac{N}{\xi}\right)^d
  \exp\left(\frac{N}{\xi}S(\xi)\right)
\end{equation}
with $d$ a constant.
Here $S(\xi)$ and $\tau(\xi)$ are related to the $\SL(2;\C)$ Chern--Simons invariant and the adjoint Reidemeister torsion, respectively, both associated with a certain representation of $\pi_1(S^3\setminus{H})$ to $\SL(2;\C)$.
\end{conj}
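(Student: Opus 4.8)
The plan is to attack the conjecture by a steepest-descent (saddle-point) analysis of a finite-sum model for the colored Jones polynomial evaluated at $e^{\xi/N}$. First I would fix a geometric ideal triangulation of $S^3\setminus H$ with $n$ tetrahedra and use one of the state-sum invariants attached to it---the Andersen--Kashaev Teichm\"uller TQFT partition function, Hikami's invariant, or the formula of \cite{Dimofte/Gukov/Lenells/Zagier:CNTP2010}---together with an identification of a suitable normalization of that invariant with $J_N\!\left(H;e^{\xi/N}\right)$ for $\xi$ in a small disc about $2\pi\i$. The output should be a representation of the schematic form
\begin{equation*}
  J_N\!\left(H;e^{\xi/N}\right)
  =\sum_{\mathbf{k}}g_{\xi}(\mathbf{k}/N)\,
  \exp\!\left(\frac{N}{\xi}\Phi_{\xi}(\mathbf{k}/N)\right)\bigl(1+o(1)\bigr),
\end{equation*}
where $g_{\xi}$ is a slowly varying amplitude and $\Phi_{\xi}$ is a potential assembled from the dilogarithm whose critical-point equations become, after a change of variables, Thurston's gluing equations for the triangulation under the boundary condition that the logarithmic holonomy of the meridian is pinned by $\xi$ (the value $\xi=2\pi\i$ giving the complete structure).

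Next I would convert the sum to an integral by Poisson summation or Euler--Maclaurin and deform the integration contour---necessarily into complexified shape coordinates, since for non-real $\xi$ the steepest-descent locus leaves the real slice---so that it passes through the critical points of $\Phi_{\xi}$. By Mostow rigidity and the Neumann--Zagier analysis of deformed gluing equations, for $\xi$ close enough to $2\pi\i$ there is a distinguished critical point $\rho_{\xi}$ deforming the holonomy of the complete hyperbolic structure, and $\Phi_{\xi}(\rho_{\xi})$ equals the value $S(\xi)$ of the $\SL(2;\C)$ Chern--Simons function on the corresponding incomplete structure; continuity in $\xi$ then propagates this identification from $\xi=2\pi\i$ across the disc.

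Granting that $\rho_{\xi}$ is the dominant critical point, the remainder is standard: a Gaussian integration transverse to $\rho_{\xi}$ produces the power $\left(N/\xi\right)^{d}$ (with $d$ governed by the number of surviving integration variables) together with an inverse square root of the Hessian determinant of $\Phi_{\xi}$ at $\rho_{\xi}$; absorbing the amplitude $g_{\xi}(\rho_{\xi})$ into this Hessian yields the factor $\tau(\xi)^{1/2}$, which one matches with the square root of the adjoint Reidemeister torsion of $S^3\setminus H$ twisted by $\rho_{\xi}$ by way of the 1-loop conjecture of Dimofte--Garoufalidis, the identification at $\xi=2\pi\i$ having been established for two-bridge knots in \cite{Ohtsuki/Takata:GEOTO2015}. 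Bookkeeping on the normalizations---the ratio between the state-sum normalization and the convention $J_N(U;q)=1$, plus the contribution of the peripheral direction not seen by the nondegenerate Gaussian---supplies the prefactor $C/\sinh(\xi)$, whose pole at $\xi=2\pi\i$ accounts for the hypothesis $\xi\neq2\pi\i$; a uniform bound on the non-critical part of the deformed contour then finishes the error analysis.

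The serious obstacle is precisely the step I have \emph{assumed}: that the geometric critical point dominates, that is, $\Re\!\left(\Phi_{\xi}(\rho_{\xi})/\xi\right)$ strictly exceeds $\Re\!\left(\Phi_{\xi}(\rho')/\xi\right)$ for every other critical point $\rho'$ (points on other components of the $\SL(2;\C)$ character variety, Galois conjugates of $\rho_{\xi}$, reducible representations). This is a global property of $\Phi_{\xi}$ admitting no local certificate, and it is exactly where the conjecture is open in general: it has been verified for only a short list of knots, of which the figure-eight knot is the prototype and the one this paper treats (for real $\xi$). A subsidiary difficulty is that the 1-loop identification of the Hessian with the twisted torsion is itself conjectural beyond the families where it has been checked, so an unconditional proof would have to establish that as well; and even the first step---an exact state-sum for $J_N\!\left(H;e^{\xi/N}\right)$ valid for arbitrary $\xi$ near $2\pi\i$ and amenable to asymptotic control---requires care, since the standard formulas are tailored to $\xi=2\pi\i$.
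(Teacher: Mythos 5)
The statement you have been asked to prove is stated in the paper as a \emph{conjecture} (the Generalized Volume Conjecture of \cite{Dimofte/Gukov/Lenells/Zagier:CNTP2010,Murakami:JTOP2013}); the paper offers no proof of it and does not claim one. What the paper actually proves is the single new instance in Theorem~\ref{thm:main}: the figure-eight knot at real $\xi$ with $|\xi|>\kappa$, handled by completely explicit means --- Habiro's one-dimensional sum \eqref{eq:J_N_fig8}, the Faddeev-type quantum dilogarithm $T_N$, Poisson summation (Proposition~\ref{prop:Poisson}), and a one-variable saddle point argument whose dominance is \emph{verified}, not assumed, via the monotonicity of $\psi$ on the real axis (Lemma~\ref{lem:psi_negative}) and the explicit control of $\Re\psi$ off the axis (Lemma~\ref{lem:Poisson_psi}). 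So there is no proof in the paper against which your proposal can be matched; the only fair comparison is between your general programme and the paper's worked special case.

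As a proof of the conjecture your proposal has genuine gaps, which to your credit you identify yourself: (i) an exact state-sum expression for $J_N\!\left(H;e^{\xi/N}\right)$ for a general hyperbolic knot $H$ and general $\xi$ near $2\pi\i$, with a slowly varying amplitude and a dilogarithm potential whose critical equations are the gluing equations, is not available --- the Teichm\"uller-TQFT and Hikami-type invariants are not known to coincide with the colored Jones polynomial, and Habiro-type formulas exist only for special families; (ii) the dominance of the geometric critical point over all other critical points (other characters, Galois conjugates, reducibles) is a global statement with no known general certificate and is precisely where the conjecture is open; (iii) the identification of the Gaussian/Hessian factor with the adjoint Reidemeister torsion is the 1-loop conjecture, itself unproved in general. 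An argument that assumes (i)--(iii) reduces the conjecture to other open conjectures rather than proving it. The paper's contribution is exactly to carry out all three steps unconditionally in one low-dimensional case, where the potential $F$ is a concrete combination of two dilogarithms, the unique relevant critical point $\varphi(\xi)/\xi$ is located by hand, and the Hessian $-\xi\sqrt{(2\cosh(\xi)-1)^2-4}$ is matched with ${\Tor}_{\mu}(\rho)$ by direct computation rather than by appeal to the 1-loop conjecture.
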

\par
In this paper, we are mainly interested in the figure-eight knot.
We first list known results for the asymptotic behavior of the colored Jones polynomial of the figure-eight knot $E$.
\par
We define a function $\varphi$ as
\begin{equation*}
  \varphi(\xi)
  :=
  \arccosh\left(\cosh(\xi)-\frac{1}{2}\right)
\end{equation*}
for a complex number $\xi$, where we use the following branch of $\arccosh$:
\begin{equation*}
  \arccosh(x)
  :=
  \log\left(x-\i\sqrt{1-x^2}\right),
\end{equation*}
and we choose the branch cut of $\log$ as $(-\infty,0)$.
Note that $\varphi(0)=-\pi\i/3$ and $\varphi(\kappa)=0$, where $\kappa:=\log\left(\frac{3+\sqrt{5}}{2}\right)$.
We also define
\begin{align*}
  S(\xi)
  &:=
  \Li_2(e^{-\xi-\varphi(\xi)})-\Li_2(e^{-\xi+\varphi(\xi)})+\xi\varphi(\xi),
  \\
  \tilde{S}(\xi)
  &:=
  \Li_2(e^{-\xi-\varphi(\xi)})-\Li_2(e^{-\xi+\varphi(\xi)})
  +(\xi-2\pi\i)(\varphi(\xi)+2\pi\i),
  \\
  T(\xi)
  &:=
  \frac{2}{\sqrt{(2\cosh(\xi)+1)(2\cosh(\xi)-3)}},
\end{align*}
where
\begin{equation*}
  \Li_2(z):=-\int_{0}^{z}\frac{\log(1-x)}{x}\,dx
\end{equation*}
is the dilogarithm function, where we choose the branch cut as $(1,\infty)$.
Note that $S(0)=\tilde{S}(2\pi\i)=\Li_2(e^{\pi\i/3})-\Li_2(e^{-\pi\i/3})=\i\times 2.02988...$.
\par
The quantities $S(\xi)$, $\tilde{S}(\xi)$, and $T(\xi)$ are related to the Chern--Simons invariant and the adjoint Reidemeister torsion associated with a certain representation of $\pi_1\left(S^3\setminus{E}\right)$ to $\SL(2;\C)$.
See Section~\ref{sec:CST} for details.
\begin{enumerate}
\item
$\xi=2\pi\i$:
\par
This corresponds to the case of the Volume Conjecture.
Kashaev sketched a proof of \eqref{eq:VC} in \cite{Kashaev:LETMP97}, and then T.~Ekholm gave a detailed proof of \eqref{eq:VC}.
See for example \cite[Section~3.2]{Murakami/Yokota:2018} for Ekholm's proof.
Later, J.E.~Andersen and S.K.~Hansen \cite[Theorem~1]{Andersen/Hansen:JKNOT2006} followed Kashaev's method to prove the following asymptotic formula.
\begin{equation*}
  J_{N}\left(E;e^{2\pi\i/N}\right)
  \underset{N\to\infty}{\sim}
  2\pi^{3/2}\left(\frac{N}{2\pi\i}\right)^{3/2}T(2\pi\i)^{1/2}
  \exp\left(\frac{N}{2\pi\i}\tilde{S}(2\pi\i)\right).
\end{equation*}
Since $S(0)=\i\Vol\left(S^3\setminus{E}\right)$, this refines the Volume Conjecture for the figure-eight knot.
\item
$\xi$ is close to $2\pi\i$ and not purely imaginary:
\par
Y.~Yokota and the first author proved the following formula \cite{Murakami/Yokota:JREIA2007}.
\begin{equation}\label{eq:Yokota}
  \lim_{N\to\infty}
  \frac{\log J_{N}(E;e^{\xi/N})}{N}
  =
  \frac{\tilde{S}(\xi)}{\xi}.
\end{equation}
\item
$\xi$ is purely imaginary with $5\pi/3<|\xi|<7\pi/3$, and $2\pi/|\xi|$ is irrational with finite irrationality measure:
\par
In \cite{Murakami:KYUMJ2004} (see \cite{Murakami:KYUMJ2016} for correction), the first author proved the following formula.
\begin{equation*}
  \lim_{N\to\infty}
  \frac{\log J_{N}(E;e^{\xi/N})}{N}
  =
  \frac{\tilde{S}(\xi)}{\xi}.
\end{equation*}
See also \cite[6.2.2]{Murakami:Novosibirsk}.
\item
$\xi$ is of the form $2\pi\i+u$ for a real number $u$ with $0<|u|<\kappa$:
\par
The first author proved the following asymptotic formula \cite[Theorem~1.4]{Murakami:JTOP2013}:
\begin{equation}\label{eq:J_top}
  J_{N}\left(E;e^{\xi/N}\right)
  \underset{N\to\infty}{\sim}
  \frac{\sqrt{-\pi}}{2\sinh(u/2)}T(\xi)^{1/2}\left(\frac{N}{\xi}\right)^{1/2}
  \exp\left(\frac{N}{\xi}\tilde{S}(\xi)\right).
\end{equation}
Note that this refines \eqref{eq:Yokota} when $\xi$ is as above.
\item
$\xi$ satisfies the inequalities $|2\cosh{\xi}-2|<1$ and $|\Im{\xi}|<\pi/3$:
\par
In this case, $J_{N}\left(E;e^{\xi/N}\right)$ converges.
In fact, we have
\begin{equation*}
  \lim_{N\to\infty}
  J_{N}\left(E;e^{\xi/N}\right)
  =
  \frac{1}{\Delta(E;e^{\xi})},
\end{equation*}
where $\Delta(E;t)=-t+3-t^{-1}$ is the normalized Alexander polynomial of the figure-eight knot $E$
\cite[Theorem~1.1]{Murakami:JPJGT2007}.
\par
See \cite{Garoufalidis/Le:GEOTO2011} for general knots.
\item
$\xi=\pm\kappa$:
\par
In this case, $J_{N}\left(E;e^{\kappa/N}\right)$ grows polynomially.
More precisely, we have
\begin{equation*}
  J_{N}\left(E;e^{\kappa/N}\right)
  \underset{N\to\infty}{\sim}
  \frac{\Gamma(1/3)}{(3\kappa)^{2/3}}N^{2/3},
\end{equation*}
where $\Gamma(z)$ is the Gamma function \cite[Theorem~1.1]{Hikami/Murakami:COMCM2008}.
\item
$\xi$ is real and $|\xi|>\kappa$:
\par
The first author proved the following formula \cite[Theorem~8.1]{Murakami:KYUMJ2004} (see also \cite[Theorem~3.2]{Murakami:ACTMV2008} and \cite[Lemma~6.7]{Murakami:Novosibirsk}).
\begin{equation}\label{eq:Kyungpook}
  \lim_{N\to\infty}
  \frac{\log J_{N}(E;e^{\xi/N})}{N}
  =
  \frac{S(\xi)}{|\xi|}.
\end{equation}
\end{enumerate}
\par
The purpose of this paper is to refine \eqref{eq:Kyungpook}.
We will show
\begin{thm}\label{thm:main}
If $\xi$ is real and $|\xi|>\kappa$, then we have
\begin{equation*}
  J_{N}\left(E;e^{\xi/N}\right)
  \underset{N\to\infty}{\sim}
  \frac{\sqrt{\pi}}{2\sinh(|\xi|/2)}T(\xi)^{1/2}\left(\frac{N}{|\xi|}\right)^{1/2}
  \exp\left(\frac{N}{|\xi|}S(\xi)\right).
\end{equation*}
\end{thm}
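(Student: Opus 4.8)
The plan is to apply the Laplace method to Habiro's cyclotomic expansion of the colored Jones polynomial. Since the figure-eight knot $E$ is amphichiral we have $J_{N}(E;q)=J_{N}(E;q^{-1})$, so it suffices to treat $\xi>0$ and to prove the stated formula with $|\xi|$ replaced by $\xi$. Starting from
\begin{equation*}
  J_{N}(E;q)
  =\sum_{k=0}^{N-1}\prod_{j=1}^{k}\bigl(q^{(N+j)/2}-q^{-(N+j)/2}\bigr)\bigl(q^{(N-j)/2}-q^{-(N-j)/2}\bigr)
\end{equation*}
and substituting $q=e^{\xi/N}$, the identity $4\sinh A\sinh B=2\cosh(A+B)-2\cosh(A-B)$ rewrites the $k$-th summand as
\begin{equation*}
  g_{N}(k):=\prod_{j=1}^{k}\bigl(2\cosh\xi-2\cosh(j\xi/N)\bigr),
\end{equation*}
a product of \emph{positive} real numbers because $\xi>0$ and $1\le j\le N-1$. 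Hence $J_{N}(E;e^{\xi/N})=\sum_{k=0}^{N-1}g_{N}(k)$ is a sum of positive terms and its asymptotics can be extracted by a one-variable Laplace argument, with no contour deformation required; this is what makes the real case more elementary than the cases in which $\xi$ is complex.

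I would then introduce the potential $h(u):=\log\bigl(2\cosh\xi-2\cosh(u\xi)\bigr)$ and $\Phi(u):=\int_{0}^{u}h(t)\,dt$, so that $\log g_{N}(k)=\sum_{j=1}^{k}h(j/N)$, and apply the Euler--Maclaurin formula to get, uniformly for $k/N$ in a compact subinterval of $(0,1)$,
\begin{equation*}
  \log g_{N}(k)=N\Phi(k/N)+\tfrac12\bigl(h(k/N)-h(0)\bigr)+O(1/N).
\end{equation*}
Writing $\log(2\sinh s)=s+\log(1-e^{-2s})$ and using $\int\log(1-e^{-2s})\,ds=\tfrac12\Li_{2}(e^{-2s})$ gives the closed form
\begin{equation*}
  \Phi(u)=\xi u+\frac{1}{\xi}\Bigl(\Li_{2}\bigl(e^{-(1+u)\xi}\bigr)-\Li_{2}\bigl(e^{-(1-u)\xi}\bigr)\Bigr).
\end{equation*}
Since $h$ is strictly decreasing on $(0,1)$ with $h(0)=\log(2\cosh\xi-2)>0$ — here the hypothesis $\xi>\kappa$ enters, because $\cosh\kappa=3/2$ — and $h(1^{-})=-\infty$, the function $\Phi$ is strictly concave with a unique interior maximum at the point $u_{0}\in(0,1)$ characterized by $\cosh(u_{0}\xi)=\cosh\xi-\tfrac12$, that is, $u_{0}=\varphi(\xi)/\xi$. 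Using $\cosh\xi-\cosh\varphi(\xi)=\tfrac12$ and the identity $\sinh^{2}\varphi(\xi)=\tfrac14(2\cosh\xi+1)(2\cosh\xi-3)$ that follows from it, one computes
\begin{equation*}
  \Phi(u_{0})=\frac{S(\xi)}{\xi},
  \qquad
  \Phi''(u_{0})=h'(u_{0})=-2\xi\sinh\varphi(\xi)=-\frac{2\xi}{T(\xi)},
  \qquad
  \tfrac12\bigl(h(u_{0})-h(0)\bigr)=-\log\bigl(2\sinh(\xi/2)\bigr).
\end{equation*}

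Finally I would carry out the Laplace method at the peak. The ratio $g_{N}(k)/g_{N}(k-1)=2\cosh\xi-2\cosh(k\xi/N)$ exceeds $1$ precisely when $k<u_{0}N$, so $g_{N}(k)$ is unimodal with maximum at $k_{0}:=\floor{u_{0}N}$, and $g_{N}(k_{0})\sim e^{N\Phi(u_{0})-\log(2\sinh(\xi/2))}$. For $|k/N-u_{0}|\le N^{-1/2}\log N$ the displayed estimate together with the Taylor expansion $N\Phi(k/N)=N\Phi(u_{0})+\tfrac{N}{2}\Phi''(u_{0})(k/N-u_{0})^{2}+o(1)$ gives $g_{N}(k)=g_{N}(k_{0})\exp\bigl(\tfrac{N}{2}\Phi''(u_{0})(k/N-u_{0})^{2}+o(1)\bigr)$, whose Riemann sum over this window is asymptotic to $\sqrt{2\pi N/|\Phi''(u_{0})|}$; the contributions of the remaining $k$ are $e^{-\Theta(N)}$ relative to $g_{N}(k_{0})$ by unimodality and do not affect the leading term. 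Assembling the factors,
\begin{equation*}
  J_{N}(E;e^{\xi/N})
  \sim g_{N}(k_{0})\sqrt{\frac{2\pi N}{|\Phi''(u_{0})|}}
  \sim\frac{\sqrt{\pi}}{2\sinh(\xi/2)}\,T(\xi)^{1/2}\Bigl(\frac{N}{\xi}\Bigr)^{1/2}\exp\Bigl(\frac{N}{\xi}S(\xi)\Bigr),
\end{equation*}
which is the assertion. The step I expect to be the main obstacle is making the Euler--Maclaurin estimate uniform near the endpoint $u=1$, where $h'(u)\to-\infty$ and the remainder ceases to be $O(1/N)$; I would handle this by truncating at $u=1-\delta$ and estimating $\sum_{k>(1-\delta)N}g_{N}(k)$ crudely — the omitted factors $2\cosh\xi-2\cosh(j\xi/N)$ are bounded away from $1$, so this tail is $e^{-\Theta(N)}$ relative to $g_{N}(k_{0})$ — which is legitimate because the maximizer $u_{0}=\varphi(\xi)/\xi$ stays bounded away from $1$ for fixed $\xi$. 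The analogous crude estimate disposes of the range $k<\delta N$.
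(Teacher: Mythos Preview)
Your approach is essentially correct and genuinely different from the paper's. The paper rewrites each summand via a quantum dilogarithm $T_N(z)$, shows $\tfrac1N T_N\to\tfrac1\xi\L_2$ uniformly, truncates the sum near the critical point, and then applies Ohtsuki's Poisson summation (Proposition~\ref{prop:Poisson}) to replace the sum by an integral before invoking the saddle point method. You instead exploit the feature, specific to real $\xi>\kappa$, that every summand $g_N(k)=\prod_{j=1}^{k}\bigl(2\cosh\xi-2\cosh(j\xi/N)\bigr)$ is positive, and run Euler--Maclaurin on $\log g_N(k)$ followed by the classical discrete Laplace method. Your route is more elementary and self-contained---no contour integrals, no quantum dilogarithm, no Poisson summation---but the positivity is essential and the argument does not extend to complex $\xi$; the paper's machinery is the same framework used for $\xi$ near $2\pi\i$ in the cited works, which is presumably why the authors chose it. One small imprecision: for $k$ with $|k/N-u_0|$ just outside $N^{-1/2}\log N$ the ratio $g_N(k)/g_N(k_0)$ is of order $e^{-\Theta(\log^2 N)}$, not $e^{-\Theta(N)}$; what you actually need (and what unimodality plus the Gaussian shape gives) is that the total contribution outside the window is $o\bigl(\sqrt{N}\,g_N(k_0)\bigr)$, which suffices for the leading term.
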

Moreover, we can show that $T(\xi)$ is the Reidemeister torsion twisted by the adjoint action of a representation $\rho$ of $\pi_1(S^3\setminus{E})$ to $\SL(2;\C)$ determined by $\xi$, and that $S(\xi)-\xi\eta/2$ is the Chern--Simons invariant of $\rho$ associated with the meridian and the preferred longitude of $E\subset S^{3}$.
See Section~\ref{sec:CST} for details.
\begin{rem}
Suppose that $u:=\xi-2\pi\i$ is a small complex number or a real number with $|u|<\kappa$.
In this case $\Im\varphi(u)<0$.
By using a well-known formula
\begin{equation*}
  \Li_2(z)+\Li_2(z^{-1})+\frac{\pi^2}{6}+\frac{1}{2}\left(\log(-z)\right)^2=0.
\end{equation*}
we have
\begin{equation*}
\begin{split}
  &\tilde{S}(\xi)
  \\
  =&
  -\Li_2(e^{\xi+\varphi(\xi)})+\Li_2(e^{\xi-\varphi(\xi)})
  -\frac{1}{2}\left(\log(-e^{\xi+\varphi(\xi)})\right)^2
  +\frac{1}{2}\left(\log(-e^{\xi-\varphi(\xi)})\right)^2
  \\
  &+u(\varphi(\xi)+2\pi\i)
  \\
  \\
  =&
  -\Li_2(e^{u+\varphi(u)})+\Li_2(e^{u-\varphi(u)})
  -\frac{1}{2}\left(u+\varphi(u)+\pi\i\right)^2
  \\
  &+\frac{1}{2}\left(u-\varphi(u)-\pi\i\right)^2
   +(2\pi\i+u)\varphi(u)
  \\
  =&
  -\Li_2(e^{u+\varphi(u)})+\Li_2(e^{u-\varphi(u)})-u\varphi(u).
\end{split}
\end{equation*}
In the second equality, we use the fact that $\Im\varphi(u)<0$.
Therefore \eqref{eq:J_top} coincides with the formula appearing in \cite[Theorem~1.4]{Murakami:JTOP2013}.
\end{rem}
\begin{rem}
In \cite{Murakami:JTOP2013}, the first author followed \cite{Andersen/Hansen:JKNOT2006} to obtain the asymptotic formula, but in the current paper we follow \cite{Ohtsuki:QT2016}.
\end{rem}
\section{Preliminaries}
In this section, we first introduce the colored Jones polynomial, and then we define a quantum dilogarithm, and variants of the logarithm and the dilogarithm.
We also describe some of their properties.
\par
For a knot $K$ in the three-sphere $S^3$, we denote by $J_N(K;q)$ the colored Jones polynomial of $K$ associated with the $N$-dimensional irreducible representation of the Lie algebra $\frak{sl}(2;\C)$ \cite{Kirby/Melvin:INVEM1991,Reshetikhin/Turaev:INVEM1991}.
We normalize it so that $J_N(U;q)=1$ for the unknot $U\subset S^3$.
Note that $J_2(K;q)$ is the original Jones polynomial \cite{Jones:BULAM31985}.
\par
Let $E$ be the figure-eight knot.
We use the following formula obtained by K.~Habiro \cite[P.~36 (1)]{Habiro:SURIK2000} and T.T.Q.~Le \cite[P.~129]{Le:TOPOA2003}.
See also \cite{Masbaum:ALGGT12003}.
\begin{equation}\label{eq:J_N_fig8}
\begin{split}
  J_N(E;q)
  &=
  \sum_{k=0}^{N-1}
  \prod_{l=1}^{k}\left(q^{(N-l)/2}-q^{-(N-l)/2}\right)\left(q^{(N+l)/2}-q^{(N+l)/2}\right)
  \\
  &=
  \sum_{k=0}^{N-1}
  q^{-kN}
  \prod_{l=1}^{k}\left(1-q^{N-l}\right)\left(1-q^{N+l}\right).
\end{split}
\end{equation}
\par
Next, we define functions $T_N(z)$, $\L_{0}(z)$, $\L_{1}(z)$, and $\L_{2}(z)$, which are related as
\begin{align*}
  T_{N}(z)
  &=
  \frac{N}{\xi}\L_{2}(z)+O(1/N)\quad(N\to\infty),
  \\
  \frac{d\,\L_{2}}{d\,z}(z)
  &=
  -2\pi\i\L_{1}(z),
  \\
  \frac{d\,\L_{1}}{d\,z}(z)
  &=
  -\L_{0}(z).
\end{align*}
\par
Let $\xi$ be a positive real number.
We define $C_{0}:=(-\infty,-1]\cup\{e^{\tau\i}\mid0\le\tau\le\pi\}\cup[1,\infty)$ and $C_{\theta}:=e^{\i\theta}C_{0}$, where $\theta$ is a positive real number with $\tan\theta<\frac{\pi}{\xi}$ and we orient $C_{0}$ from left to right.
\par
Consider the following integrals:
\begin{equation*}
  \int_{C_{\theta}}\frac{e^{(2z-1)x}}{x\sinh(x)\sinh(\gamma x)}\,dx,
  \quad
  \int_{C_{\theta}}\frac{e^{(2z-1)x}}{x^{m}\sinh(x)}\,dx,
\end{equation*}
where $m=0,1,2$, $\gamma:=\frac{\xi}{2N\pi\i}$ for an integer $N\ge2$, and $z$ is a complex number with $0<\Re(ze^{\i\theta})<\cos\theta$.
Here we follow \cite{Faddeev:LETMP1995} to introduce $T_N(z)$, which plays an important role in the paper.
Note that the set of the poles of the integrands are
\begin{equation*}
  \{k\pi\i\mid k\in\Z\}\cup\{2lN\pi^2/\xi\mid l\in\Z\}
  \quad\text{and}\quad
  \{k\pi\i\mid k\in\Z\},
\end{equation*}
respectively.
Therefore, if $N$ is large enough, then the only pole inside the unit circle centered at the origin is $0\in\C$ and so the path of integral $C_{\theta}$ avoids the poles.
Proofs of their convergences (Lemmas~\ref{lem:converge_T} and \ref{lem:integrals} below) are given in Section~\ref{sec:proofs_lemmas}.
\begin{lem}\label{lem:converge_T}
The integral $\int_{C_{\theta}}\frac{e^{(2z-1)x}}{x\sinh(x)\sinh(\gamma x)}\,dx$ converges if $z\in\C$ satisfies $-\frac{\xi\sin\theta}{4N\pi}<\Re(ze^{\theta\i})<\cos\theta+\frac{\xi\sin\theta}{4N\pi}$.
\end{lem}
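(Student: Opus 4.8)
The plan is to separate the compact part of the contour $C_{\theta}$ from its two unbounded rays, dispose of the compact part by continuity, and control the integrand on the rays by crude exponential bounds for $\sinh$. Concretely, write $C_{\theta}$ as the union of a compact subarc and the two rays $R_{+}:=\{r\,e^{\i\theta}\mid r\ge R_{0}\}$ and $R_{-}:=\{-r\,e^{\i\theta}\mid r\ge R_{0}\}$, where $R_{0}\ge1$ is chosen large. On the compact complement the integrand is continuous: for $N$ large enough $C_{\theta}$ avoids every pole $k\pi\i$ and $2lN\pi^{2}/\xi$ of the integrand, as recorded just before the statement, so that portion contributes a finite amount and only the two rays require estimation.

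On the rays I would use the elementary inequality $|\sinh(w)|\ge\tfrac12\bigl(e^{|\Re(w)|}-e^{-|\Re(w)|}\bigr)\ge c\,e^{|\Re(w)|}$, valid once $|\Re(w)|$ is bounded below, applied both to $w=x$ and to $w=\gamma x$. Since $\gamma=\xi/(2N\pi\i)$ is purely imaginary, a direct computation gives, for $x=\pm r\,e^{\i\theta}$,
\[
  |\Re(x)|=r\cos\theta,
  \qquad
  |\Re(\gamma x)|=\frac{\xi\sin\theta}{2N\pi}\,r,
\]
both of which tend to $+\infty$ because $0<\theta<\pi/2$, while $|e^{(2z-1)x}|=\exp\bigl(\pm r\,(2\Re(z e^{\i\theta})-\cos\theta)\bigr)$ on $R_{\pm}$. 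Combining these, for $r$ large and $x=r\,e^{\i\theta}\in R_{+}$,
\[
  \left|\frac{e^{(2z-1)x}}{x\sinh(x)\sinh(\gamma x)}\right|
  \le
  \frac{1}{c^{2}\,r}\,\exp\!\left(r\Bigl(2\Re(z e^{\i\theta})-2\cos\theta-\frac{\xi\sin\theta}{2N\pi}\Bigr)\right),
\]
and the exponent is strictly negative because $\Re(z e^{\i\theta})<\cos\theta$; likewise, for $x=-r\,e^{\i\theta}\in R_{-}$,
\[
  \left|\frac{e^{(2z-1)x}}{x\sinh(x)\sinh(\gamma x)}\right|
  \le
  \frac{1}{c^{2}\,r}\,\exp\!\left(-r\Bigl(2\Re(z e^{\i\theta})+\frac{\xi\sin\theta}{2N\pi}\Bigr)\right),
\]
with exponent strictly negative because $\Re(z e^{\i\theta})>0$. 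Hence the integrals over $R_{+}$ and $R_{-}$ converge absolutely, in fact exponentially fast, and together with the finite contribution of the compact part this yields convergence of $\int_{C_{\theta}}\frac{e^{(2z-1)x}}{x\sinh(x)\sinh(\gamma x)}\,dx$.

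There is no genuine obstacle here; the computation is routine. The only points demanding care are the bookkeeping of the real parts along the two rays and the observation that the two-sided hypothesis $0<\Re(z e^{\i\theta})<\cos\theta$ is used in an essential, symmetric fashion — the upper bound controls the ray $R_{+}$ and the lower bound controls the ray $R_{-}$. Finally, I would note that the extra factor $\sinh(\gamma x)$, which is the new ingredient compared with Faddeev's classical integral, only improves matters: it supplies the additional exponentially small factor $e^{-\frac{\xi\sin\theta}{2N\pi}\,r}$ on both rays, so that no restriction on $\theta$ beyond $0<\theta<\pi/2$ is needed for this particular lemma.
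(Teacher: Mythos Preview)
Your proposal is correct and follows essentially the same route as the paper: parametrize the two unbounded rays of $C_{\theta}$ by $x=\pm r\,e^{\i\theta}$, bound $|\sinh(x)|$ and $|\sinh(\gamma x)|$ from below by exponentials in $r\cos\theta$ and $\frac{\xi\sin\theta}{2N\pi}r$ respectively, and observe that the two-sided hypothesis $0<\Re(ze^{\i\theta})<\cos\theta$ makes the resulting exponent strictly negative on each ray. The paper phrases the same estimate via the asymptotics $\sinh(as)\sim\tfrac12 e^{\pm as}$ rather than your explicit inequality $|\sinh(w)|\ge\tfrac12\bigl(e^{|\Re w|}-e^{-|\Re w|}\bigr)$, but the resulting bounds are identical.
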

\begin{lem}\label{lem:integrals}
If $z\in\C$ satisfies $0<\Re(ze^{\theta\i})<\cos\theta$, then the integral $\int_{C_{\theta}}\frac{e^{(2z-1)x}}{x^{m}\sinh(x)}\,dx$ converges for $m=0,1,2$.
\end{lem}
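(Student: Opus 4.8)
The plan is to show that, for each $m\in\{0,1,2\}$ (in fact for every integer $m\ge0$), the integrand $e^{(2z-1)x}/(x^{m}\sinh x)$ is continuous along the whole contour $C_{\theta}$ and decays exponentially along its two infinite rays, so that the integral converges absolutely.

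First I would record that $\theta\in(0,\pi/2)$, since $0<\tan\theta<\pi/\xi$; in particular $\cos\theta>0$. As noted just above the lemma, the poles of the integrand are the points $k\pi\i$ with $k\in\Z$. Every point of $C_{\theta}=e^{\theta\i}(-\infty,-1]\cup e^{\theta\i}\{e^{\tau\i}\mid 0\le\tau\le\pi\}\cup e^{\theta\i}[1,\infty)$ has modulus at least $1$, so $C_{\theta}$ avoids the pole $0$; and since $\theta\notin\pi/2+\pi\Z$, neither of the two rays meets the imaginary axis, so $C_{\theta}$ avoids the poles $k\pi\i$ with $k\ne0$ as well. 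Hence the integrand is continuous on $C_{\theta}$, the integral over the compact semicircular arc is automatically finite, and it remains only to control the behaviour as $|x|\to\infty$ along the two rays.

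On the ray $e^{\theta\i}[1,\infty)$ I would put $x=re^{\theta\i}$ with $r\ge1$. Then $|e^{(2z-1)x}|=e^{r(2\Re(ze^{\theta\i})-\cos\theta)}$, while the reverse triangle inequality gives $|\sinh x|\ge|\sinh(\Re x)|=\sinh(r\cos\theta)$, and $|x^{-m}|=r^{-m}\le1$; since $\sinh(r\cos\theta)\ge\frac14 e^{r\cos\theta}$ for all large $r$, this yields
\[
  \left|\frac{e^{(2z-1)x}}{x^{m}\sinh x}\right|
  \le
  4\,e^{-2r\left(\cos\theta-\Re(ze^{\theta\i})\right)}
\]
for all large $r$, and the exponent is negative because $\Re(ze^{\theta\i})<\cos\theta$. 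On the ray $e^{\theta\i}(-\infty,-1]$ I would put $x=-re^{\theta\i}$ with $r\ge1$; the same computation gives $|e^{(2z-1)x}|=e^{r(\cos\theta-2\Re(ze^{\theta\i}))}$ and $|\sinh x|\ge\sinh(r\cos\theta)$, hence
\[
  \left|\frac{e^{(2z-1)x}}{x^{m}\sinh x}\right|
  \le
  4\,e^{-2r\Re(ze^{\theta\i})}
\]
for all large $r$, whose exponent is negative because $\Re(ze^{\theta\i})>0$. Thus on each ray the integrand is dominated by an exponentially decaying function of $r$, so both ray integrals converge absolutely; together with the finite contribution of the arc, this gives the stated convergence.

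The computation is elementary and I do not anticipate a genuine obstacle. The one point worth emphasising is that the hypothesis $0<\Re(ze^{\theta\i})<\cos\theta$ is used at both ends: the upper bound $\Re(ze^{\theta\i})<\cos\theta$ forces exponential decay along the outgoing ray $e^{\theta\i}[1,\infty)$, while the lower bound $\Re(ze^{\theta\i})>0$ does so along the incoming ray $e^{\theta\i}(-\infty,-1]$; and the elementary inequality $|\sinh x|\ge|\sinh(\Re x)|$ is exactly what offsets the growth of $|e^{(2z-1)x}|$ against $1/|\sinh x|$ on each of them. The factor $x^{-m}$ plays no role beyond the harmless bound $|x^{-m}|\le1$ on $C_{\theta}$, which is why the same argument handles $m=0,1,2$ simultaneously.
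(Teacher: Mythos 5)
Your proof is correct and follows essentially the same route as the paper's: parametrize each infinite ray, bound $|\sinh x|$ from below via $|\sinh x|\ge\tfrac12\left|e^{\Re x}-e^{-\Re x}\right|$, and use $\Re(ze^{\theta\i})<\cos\theta$ on the outgoing ray and $\Re(ze^{\theta\i})>0$ on the incoming one to get exponential decay. The only cosmetic difference is that you discard the harmless factor $|x|^{-m}\le1$ and absorb constants into a cruder bound valid for large $r$, whereas the paper keeps the exact integrand; the substance is identical.
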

\par
Now define functions $T_N(z)$ and $\L_{m}(z)$ ($m=0,1,2$) by using the integrals above.
\begin{defn}
Fix an integer $N\ge2$ and put $\gamma:=\frac{\xi}{2N\pi\i}$.
We define
\begin{equation*}
  T_N(z)
  :=
  \frac{1}{4}
  \int_{C_{\theta}}
  \frac{e^{(2z-1)x}}{x\sinh(x)\sinh(\gamma x)}\,dx,
\end{equation*}
for a complex number $z$ with $-\frac{\xi\sin\theta}{4N\pi}<\Re(ze^{\theta\i})<\cos\theta+\frac{\xi\sin\theta}{4N\pi}$.
\end{defn}
\par
\begin{defn}
For a complex number $z$ with $0<\Re(ze^{\i\theta})<\cos\theta$, we define
\begin{align*}
  \mathcal{L}_0(z)
  &:=
  \int_{C_{\theta}}\frac{e^{(2z-1)x}}{\sinh(x)}\,dx,
  \\
  \L_1(z)
  &:=
  -\frac{1}{2}
  \int_{C_{\theta}}\frac{e^{(2z-1)x}}{x\sinh(x)}\,dx,
  \\[5mm]
  \L_2(z)
  &:=
  \frac{\pi\i}{2}
  \int_{C_{\theta}}\frac{e^{(2z-1)x}}{x^2\sinh(x)}\,dx.
\end{align*}
\end{defn}
\begin{lem}\label{lem:L0_1_2}
For $m=0,1,2$, we calculate $\L_{m}(z)$ as follows.
\begin{align}
  \L_0(z)
  &=
  \frac{-2\pi\i}{1-e^{-2\pi\i z}},
  \label{eq:L0}
  \\
  \L_1(z)
  &=
  \begin{cases}
    \log\left(1-e^{2\pi\i z}\right)&\text{if $\Im{z}\ge0$,}
    \\[3mm]
    \pi\i(2z-1)+\log\left(1-e^{-2\pi\i z}\right)&\text{if $\Im{z}<0$,}
  \end{cases}
  \label{eq:L1}
  \\[5mm]
  \L_2(z)
  &=
  \begin{cases}
    \Li_2\left(e^{2\pi\i z}\right)&\text{if $\Im{z}\ge0$,}
    \\[3mm]
    \pi^2\left(2z^2-2z+\frac{1}{3}\right)-\Li_2\left(e^{-2\pi\i z}\right)
    &\text{if $\Im{z}<0$}.
  \end{cases}
  \label{eq:L2}
\end{align}
\end{lem}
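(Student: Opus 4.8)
The plan is to evaluate all three integrals defining $\L_0,\L_1,\L_2$ by the residue theorem, after closing $C_{\theta}$ with a large circular arc. Set $w:=2z-1$; the hypothesis $0<\Re(ze^{\theta\i})<\cos\theta$ is then equivalent to $-\cos\theta<\Re(we^{\theta\i})<\cos\theta$, and the integrands $e^{wx}/(x^{m}\sinh x)$ for $m=0,1,2$ are meromorphic with a pole of order $m+1$ at $x=0$ and simple poles at $x=k\pi\i$ for $k\in\Z\setminus\{0\}$. Since $C_{\theta}$ runs above $0$ (because of the semicircular piece of $C_{0}$) and, for small $\theta$, below $\pi\i$, it separates $0$ from the poles $k\pi\i$ with $k\ge 1$. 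I would close $C_{\theta}$ in the upper half-plane when $\Im z>0$ and in the lower half-plane when $\Im z<0$, treating $\Im z=0$ by continuity.

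Assume first $\Im z>0$. Truncating $C_{\theta}$ at radius $R=(M+\tfrac12)\pi$ and closing with the arc $A_{R}:=\{Re^{\i\phi}\mid\theta\le\phi\le\pi+\theta\}$ gives a positively oriented closed contour that encircles exactly $\pi\i,2\pi\i,\dots,M\pi\i$ (the points $0,-\pi\i,-2\pi\i,\dots$ lie outside it). On $|x|=R$ one has $|\sinh x|=\sqrt{\sinh^{2}(\Re x)+\sin^{2}(\Im x)}\ge c\,e^{R|\cos\phi|}$ for a constant $c>0$ independent of $M$ (this is where the choice $R=(M+\tfrac12)\pi$ is used), so
\[
  \left|\frac{e^{wx}}{x^{m}\sinh x}\right|\le\frac{1}{cR^{m}}\,e^{R\bigl(\Re(we^{\i\phi})-|\cos\phi|\bigr)}
  \qquad(x=Re^{\i\phi}).
\]
The main obstacle is to check that $\Re(we^{\i\phi})-|\cos\phi|$ stays negative and bounded away from $0$ on $[\theta,\pi+\theta]$, so that $\int_{A_{R}}\to 0$ as $M\to\infty$: at the endpoints $\phi=\theta$ and $\phi=\pi+\theta$ this is exactly the two defining inequalities $\pm\Re(we^{\theta\i})<\cos\theta$, near $\phi=\pi/2$ it reduces to $\Im w>0$, and a short estimate interpolates in between. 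Granting this, $\int_{C_{\theta}}e^{wx}/(x^{m}\sinh x)\,dx=2\pi\i\sum_{k\ge1}\Res_{x=k\pi\i}e^{wx}/(x^{m}\sinh x)$; since $\Res_{x=k\pi\i}e^{wx}/(x^{m}\sinh x)=(-1)^{k}e^{wk\pi\i}/(k\pi\i)^{m}$ and $-e^{w\pi\i}=e^{2\pi\i z}$, summing the resulting geometric series ($m=0$), the series $-\sum t^{k}/k$ ($m=1$), and $\sum t^{k}/k^{2}$ ($m=2$) in $t=e^{2\pi\i z}$ and multiplying by the prefactors $1$, $-\tfrac12$, $\tfrac{\pi\i}{2}$ from the definitions yields \eqref{eq:L0}, \eqref{eq:L1}, \eqref{eq:L2} in the case $\Im z\ge 0$; the series converge precisely because $|e^{2\pi\i z}|<1$.

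For $\Im z<0$ I would run the mirror argument, closing in the lower half-plane; the enclosed poles are now $0,-\pi\i,-2\pi\i,\dots$, so besides $\sum_{k\ge1}\Res_{x=-k\pi\i}$ (which produces the same kind of series in $e^{-2\pi\i z}$) there is a contribution from the Laurent expansion at $0$: from $e^{wx}/(x^{m}\sinh x)=x^{-m-1}+w\,x^{-m}+(\tfrac{w^{2}}{2}-\tfrac16)x^{-m+1}+\cdots$ one reads off $\Res_{x=0}=1,\ w,\ \tfrac{w^{2}}{2}-\tfrac16$ for $m=0,1,2$. Carrying the signs through, the residue at $0$ contributes $0$ to $\L_{0}$, the term $\pi\i(2z-1)$ to $\L_{1}$, and $\pi^{2}(2z^{2}-2z+\tfrac13)$ to $\L_{2}$, giving the $\Im z<0$ rows of \eqref{eq:L0}--\eqref{eq:L2}.

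Finally, for real $z$ with $0<\Re z<1$ both $\L_{m}$ and the first-row right-hand sides are continuous, because $1-e^{2\pi\i z}=2\sin(\pi z)\,e^{\i(\pi z-\pi/2)}$ has positive real part there, so the principal logarithm and the dilogarithm are regular; hence the $\Im z\ge 0$ formulas persist on $\Im z=0$. Using $1-e^{2\pi\i z}=e^{\pi\i(2z+1)}(1-e^{-2\pi\i z})$ one checks that the two rows of each formula agree on $\Im z=0$, which also justifies assigning $\Im z=0$ to the first row. As a consistency check, differentiating the right-hand sides of \eqref{eq:L1} and \eqref{eq:L2} reproduces $d\L_{1}/dz=-\L_{0}$ and $d\L_{2}/dz=-2\pi\i\L_{1}$, matching the relations recorded just before Lemma~\ref{lem:L0_1_2}.
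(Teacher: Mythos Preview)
Your argument is correct and follows the same residue-theorem strategy as the paper, with two minor differences worth noting. First, the paper closes $C_\theta$ with straight-line segments (two vertical segments and a horizontal translate of $C_\theta$) rather than your circular arc $A_R$; this makes the decay estimates immediate since $|\sinh x|\ge\sinh|\Re x|$ on a vertical segment, whereas your bound $|\sinh(Re^{i\phi})|\ge c\,e^{R|\cos\phi|}$ near $\phi=\pi/2$ relies on $R\sin\phi$ staying within $O(1/R)$ of $(M+\tfrac12)\pi$ so that $|\sin(R\sin\phi)|$ is bounded away from~$0$---true, but an extra step your sketch suppresses. Second, for $\L_0$ the paper uses the shift $x\mapsto x+\pi\i$ together with $\sinh(x-\pi\i)=-\sinh x$ to reduce to a \emph{single} residue rather than an infinite geometric series, which handles all $z$ in the strip at once without a separate continuity argument on $\Im z=0$; your unified residue approach is tidier conceptually but requires that extra limiting step. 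One small slip: when closing downward for $m=0$, the residue of $e^{wx}/\sinh x$ at $x=0$ is $1$ and contributes $-2\pi\i$ to $\L_0$, not $0$; this $-2\pi\i$ then combines with the geometric tail $-2\pi\i\sum_{k\ge1}e^{-2\pi\i zk}$ to give $-2\pi\i/(1-e^{-2\pi\i z})$, so the final formula is indeed the same in both half-planes as \eqref{eq:L0} asserts.
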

Proofs are also given in Section~\ref{sec:proofs_lemmas}.
\begin{rem}
In \eqref{eq:L1} and \eqref{eq:L2}, we use $\log(1-x)$ and $\Li_2(x)$ only for $|x|\le1$ ($x\ne1$).
\end{rem}
Since
\begin{equation*}
\begin{split}
  T_{N}(z-\gamma/2)-T_{N}(z+\gamma/2)
  &=
  \int_{C_{\theta}}
  \frac{e^{(2z-\gamma-1)x}-e^{(2z+\gamma-1)x}}{4x\sinh(x)\sinh(\gamma x)}\,dx
  \\
  &=
  -
  \int_{C_{\theta}}
  \frac{e^{(2z-1)x}}{2x\sinh(x)}\,dx
  \\
  &=
  \L_1(z),
\end{split}
\end{equation*}
we have the following corollary.
\begin{cor}
If $0<\Re(ze^{\theta\i})<\cos\theta$, then we have
\begin{equation}\label{eq:lem}
  \frac{\exp\left(T_{N}(z-\gamma/2)\right)}{\exp\left(T_{N}(z+\gamma/2)\right)}
  =
  1-e^{2\pi\i z}.
\end{equation}
\end{cor}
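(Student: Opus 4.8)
The plan is to read \eqref{eq:lem} off the integral definitions of $T_N$ and $\L_1$ together with the explicit evaluation of $\L_1$ in Lemma~\ref{lem:L0_1_2}; indeed the key identity $T_N(z-\gamma/2)-T_N(z+\gamma/2)=\L_1(z)$ is precisely the displayed computation preceding the statement, so only exponentiation and a small case analysis remain.

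In detail, I would first write $T_N(z-\gamma/2)-T_N(z+\gamma/2)$ as a single contour integral over $C_\theta$. This is legitimate once $N$ is large enough, since $\gamma=\frac{\xi}{2N\pi\i}$ is purely imaginary with $|\gamma|=O(1/N)$, so the shifted arguments $z\pm\gamma/2$ still satisfy $0<\Re\bigl((z\pm\gamma/2)e^{\theta\i}\bigr)<\cos\theta$, and (as noted after Lemma~\ref{lem:integrals}) $C_\theta$ avoids all poles of the three integrands. Then the factorization $e^{(2z-\gamma-1)x}-e^{(2z+\gamma-1)x}=-2e^{(2z-1)x}\sinh(\gamma x)$ cancels the $\sinh(\gamma x)$ in the denominator, leaving $-\tfrac12\int_{C_\theta}\frac{e^{(2z-1)x}}{x\sinh(x)}\,dx$, which is $\L_1(z)$ by definition. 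Exponentiating gives $\exp\bigl(T_N(z-\gamma/2)\bigr)/\exp\bigl(T_N(z+\gamma/2)\bigr)=\exp\bigl(\L_1(z)\bigr)$.

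It then remains to check $\exp\bigl(\L_1(z)\bigr)=1-e^{2\pi\i z}$ using Lemma~\ref{lem:L0_1_2}. If $\Im z\ge0$ then $\L_1(z)=\log(1-e^{2\pi\i z})$ and this is immediate. If $\Im z<0$ then $\L_1(z)=\pi\i(2z-1)+\log(1-e^{-2\pi\i z})$, so
\[
  \exp\bigl(\L_1(z)\bigr)=e^{\pi\i(2z-1)}\bigl(1-e^{-2\pi\i z}\bigr)=-e^{2\pi\i z}+1=1-e^{2\pi\i z},
\]
and \eqref{eq:lem} follows in both cases. There is essentially no real obstacle here: the only points needing care are the bookkeeping that makes the three integrals share the contour $C_\theta$ for $N$ large (the shifted arguments staying in the admissible strip), and remembering to run the case split $\Im z\ge0$ versus $\Im z<0$, since the genuine content — the closed forms for $\L_1$ — is already isolated in Lemma~\ref{lem:L0_1_2}.
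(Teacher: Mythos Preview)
Your proposal is correct and follows the paper's own argument: the paper derives $T_N(z-\gamma/2)-T_N(z+\gamma/2)=\L_1(z)$ by the same cancellation of $\sinh(\gamma x)$ in the integrand and then simply states the corollary, while you add the (straightforward) exponentiation step together with the case split on $\Im z$ coming from \eqref{eq:L1}. The extra care you take with the shifted arguments $z\pm\gamma/2$ is a welcome clarification the paper leaves implicit.
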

\par
We will show relations among the functions $T_{N}(z)$ and $\L_{m}(z)$ ($m=0,1,2$).
\par
We can prove that $\frac{1}{N}T_{N}(z)$ uniformly converges to $\frac{1}{\xi}\L_{2}(z)$.
More precisely, we have the following proposition.
See \cite[Proposition~A.1]{Ohtsuki:QT2016}.
\begin{prop}\label{prop:T_N_L_2}
For any positive real number $M$ and a sufficiently small positive real number $\nu$, we have
\begin{equation*}
  T_N(z)
  =
  \frac{N}{\xi}\L_2(z)
  +
  O(1/N)
  \quad(N\to\infty)
\end{equation*}
in the region
\begin{equation*}
  \{z\in\C\mid\nu\le\Re(ze^{\theta\i})\le\cos\theta-\nu,|\Im{z}|\le M\}.
\end{equation*}
In particular, the function $\frac{1}{N}T_N(z)$ uniformly converges to $\frac{1}{\xi}\L_2(z)$ in the region above.
\end{prop}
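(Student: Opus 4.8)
The plan is to exploit that $\gamma=\frac{\xi}{2N\pi\i}\to0$ as $N\to\infty$, so that $\frac{1}{\sinh(\gamma x)}$ is closely approximated by $\frac{1}{\gamma x}$. Writing
\[
  \frac{1}{\sinh(\gamma x)}
  =
  \frac{1}{\gamma x}+\Bigl(\frac{1}{\sinh(\gamma x)}-\frac{1}{\gamma x}\Bigr)
\]
and inserting this into the integral defining $T_N(z)$, the first term contributes
\[
  \frac{1}{4\gamma}\int_{C_{\theta}}\frac{e^{(2z-1)x}}{x^{2}\sinh(x)}\,dx
  =
  \frac{N}{\xi}\cdot\frac{\pi\i}{2}\int_{C_{\theta}}\frac{e^{(2z-1)x}}{x^{2}\sinh(x)}\,dx
  =
  \frac{N}{\xi}\L_{2}(z),
\]
since $\frac{1}{4\gamma}=\frac{N\pi\i}{2\xi}$. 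Thus it remains to prove that the remainder
\[
  T_N(z)-\frac{N}{\xi}\L_{2}(z)
  =
  \frac{1}{4}\int_{C_{\theta}}\frac{e^{(2z-1)x}}{x\sinh(x)}\Bigl(\frac{1}{\sinh(\gamma x)}-\frac{1}{\gamma x}\Bigr)\,dx
\]
is $O(1/N)$, uniformly on the stated region. (This essentially follows the argument of \cite[Proposition~A.1]{Ohtsuki:QT2016}.)

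To estimate the remainder I would split the contour $C_{\theta}$ at $|x|=\delta N$, where $\delta>0$ is a small constant depending only on $\xi$, chosen so that on the inner part $|\gamma x|\le\frac{|\xi|\delta}{2\pi}$ stays inside a fixed disk about the origin on which $\bigl|\frac{1}{\sinh(w)}-\frac{1}{w}\bigr|\le C'|w|$; such a bound is available because $\frac{1}{\sinh(w)}-\frac{1}{w}$ is holomorphic near $0$ and vanishes there. On the inner part $|x|\le\delta N$ this gives, using the cancellation $|\gamma x|/|x|=|\gamma|$,
\[
  \Bigl|\frac{e^{(2z-1)x}}{x\sinh(x)}\Bigl(\frac{1}{\sinh(\gamma x)}-\frac{1}{\gamma x}\Bigr)\Bigr|
  \le
  C'|\gamma|\,\Bigl|\frac{e^{(2z-1)x}}{\sinh(x)}\Bigr|,
\]
and the integral of the right-hand side over all of $C_{\theta}$ is finite and bounded uniformly on $\{\,\nu\le\Re(ze^{\theta\i})\le\cos\theta-\nu,\ |\Im z|\le M\,\}$; this is the uniform form of Lemma~\ref{lem:integrals} with $m=0$, coming from the exponential decay $\bigl|e^{(2z-1)x}/\sinh(x)\bigr|\le(\mathrm{const})\,e^{-2\nu t}$ along the two straight rays $x=\pm e^{\theta\i}t$ (the constant is uniform because $|\Re((2z-1)e^{\theta\i})|\le\cos\theta-2\nu<\cos\theta$ on the region). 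Since $|\gamma|=\frac{|\xi|}{2N\pi}$, the inner part contributes $O(1/N)$ uniformly.

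On the outer part $|x|>\delta N$ one uses only crude bounds. Writing $x=\pm e^{\theta\i}t$ one has $|\Re(\gamma x)|=\frac{|\xi|t\sin\theta}{2N\pi}$, hence $|\sinh(\gamma x)|\ge|\Re(\gamma x)|$ and therefore $\bigl|\frac{1}{\sinh(\gamma x)}-\frac{1}{\gamma x}\bigr|\le\frac{CN}{|x|}$; combined with $\bigl|e^{(2z-1)x}/(x\sinh x)\bigr|\le(\mathrm{const})\,e^{-2\nu t}$, the outer integral is at most $(\mathrm{const})\,N\int_{\delta N}^{\infty}e^{-2\nu t}t^{-2}\,dt$, which is exponentially small in $N$ and in particular $O(1/N)$. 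Adding the two contributions gives the proposition, and the stated uniform convergence of $\frac{1}{N}T_N(z)$ to $\frac{1}{\xi}\L_2(z)$ is then immediate.

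The step I expect to be the main obstacle is making everything uniform in $z$ while simultaneously controlling the far part of the contour, where $\gamma x$ is no longer small and the Taylor estimate for $\frac{1}{\sinh(w)}-\frac{1}{w}$ is useless: there one must instead lean on the exponential decay of $e^{(2z-1)x}/\sinh(x)$ to absorb the $O(N)$ growth of $\frac{1}{\sinh(\gamma x)}$. One should also record, as the text already points out, that for $N$ large the poles of $\frac{1}{\sinh(\gamma x)}$ --- which lie on the real axis at distance $\ge 2N\pi^{2}/|\xi|$ from the origin --- stay off $C_{\theta}$, so no residue contributions arise.
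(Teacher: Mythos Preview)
Your argument is correct and follows essentially the same route as the paper: write the remainder as $\frac{1}{4}\int_{C_{\theta}}\frac{e^{(2z-1)x}}{\gamma x^{2}\sinh(x)}\bigl(\frac{\gamma x}{\sinh(\gamma x)}-1\bigr)\,dx$, use the Taylor expansion of $w/\sinh(w)$ at $0$ to extract a factor $|\gamma|\sim 1/N$, and then reduce to the uniform boundedness of $\int_{C_{\theta}}\bigl|\frac{e^{(2z-1)x}}{\sinh(x)}\bigr|\,dx$ on the region, which is handled exactly as you indicate via the exponential decay on the rays (the paper's $I_{+}$, $I_{-}$, $I_{0}$ computation).

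The only difference is cosmetic but worth noting. The paper asserts the bound $\bigl|\frac{\gamma x}{\sinh(\gamma x)}-1\bigr|\le C|x|^{2}/N^{2}$ \emph{globally} on $C_{\theta}$ with a single constant $C$, appealing simply to the Taylor expansion; this is true (on the relevant ray the function $w/\sinh(w)-1$ is bounded and vanishes quadratically at $0$, and the ray stays at distance $\pi\sin\theta$ from the poles), but the paper does not spell it out. You instead split the contour at $|x|=\delta N$, using the Taylor bound only where $|\gamma x|$ is genuinely small and killing the far tail with the exponential decay of $e^{(2z-1)x}/\sinh(x)$. Your version is slightly longer but makes the uniformity in $N$ explicit without relying on the unproved global estimate; conversely, the paper's version is cleaner once one accepts that the quadratic bound holds on the whole contour.
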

A proof is given in Section~\ref{sec:proofs_lemmas}.
Since we may regard $\L_2(z)$ as a variant of the dilogarithm from Lemma~\ref{lem:L0_1_2}, the function $T_{N}(z)$ is another quantum dilogarithm.
\par
The functions $\L_{0}(z)$, $\L_{1}(z)$, and $\L_{2}(z)$ are related as follows.
\begin{lem}\label{lem:der_L}
The derivatives of $\L_1(z)$ and $\L_2(z)$ are given as follows:
\begin{align*}
  \frac{d\,\L_2}{d\,z}(z)
  &=
  -2\pi\i\L_1(z),
  \\
  \dfrac{d\,\L_1}{d\,z}(z)
  &=
  -\L_0(z).
\end{align*}
\end{lem}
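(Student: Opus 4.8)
The plan is to differentiate under the integral sign in the defining integral representations of $\L_{1}$ and $\L_{2}$. The key point is the elementary identity $\frac{\partial}{\partial z}e^{(2z-1)x}=2x\,e^{(2z-1)x}$, which yields
\begin{equation*}
  \frac{\partial}{\partial z}\,\frac{e^{(2z-1)x}}{x\sinh(x)}=\frac{2\,e^{(2z-1)x}}{\sinh(x)},
  \qquad
  \frac{\partial}{\partial z}\,\frac{e^{(2z-1)x}}{x^{2}\sinh(x)}=\frac{2\,e^{(2z-1)x}}{x\sinh(x)}.
\end{equation*}
Granting that one may interchange $\frac{d}{dz}$ with $\int_{C_{\theta}}$, the first identity gives
\begin{equation*}
  \frac{d\,\L_{1}}{d\,z}(z)
  =-\frac{1}{2}\int_{C_{\theta}}\frac{2\,e^{(2z-1)x}}{\sinh(x)}\,dx
  =-\int_{C_{\theta}}\frac{e^{(2z-1)x}}{\sinh(x)}\,dx
  =-\L_{0}(z),
\end{equation*}
while the second, together with the definition of $\L_{1}$, gives
\begin{equation*}
  \frac{d\,\L_{2}}{d\,z}(z)
  =\frac{\pi\i}{2}\int_{C_{\theta}}\frac{2\,e^{(2z-1)x}}{x\sinh(x)}\,dx
  =\pi\i\int_{C_{\theta}}\frac{e^{(2z-1)x}}{x\sinh(x)}\,dx
  =-2\pi\i\,\L_{1}(z).
\end{equation*}

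Thus the entire content of the lemma reduces to justifying differentiation under the integral over the non-compact contour $C_{\theta}$. I would do this by a routine dominated-convergence (equivalently Morera--Fubini) argument: for $z_{0}$ with $0<\Re(z_{0}e^{\theta\i})<\cos\theta$, choose a small closed disk $D\ni z_{0}$ still contained in that region; along $C_{\theta}$ the integrands $e^{(2z-1)x}/\sinh(x)$ and $e^{(2z-1)x}/(x\sinh(x))$, as well as their difference quotients in $z$, are bounded by an $x$-integrable function independent of $z\in D$. This is exactly the exponential-decay estimate underlying Lemmas~\ref{lem:converge_T} and \ref{lem:integrals} in Section~\ref{sec:proofs_lemmas}: the strict inequalities $0<\Re(ze^{\theta\i})<\cos\theta$ force genuine decay of $e^{(2z-1)x}$ relative to $\sinh(x)$ at both ends of $C_{\theta}$, locally uniformly in $z$. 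Hence each $\L_{m}$ is holomorphic on the region and the computation above is valid.

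Alternatively, one can bypass the analytic step by differentiating the closed formulas of Lemma~\ref{lem:L0_1_2} on each half-plane: on $\{\Im z>0\}$ one uses $\frac{d}{dw}\Li_{2}(w)=-\log(1-w)/w$ to get $\frac{d}{dz}\Li_{2}(e^{2\pi\i z})=-2\pi\i\log(1-e^{2\pi\i z})$ and $\frac{d}{dz}\log(1-e^{2\pi\i z})=-2\pi\i e^{2\pi\i z}/(1-e^{2\pi\i z})=2\pi\i/(1-e^{-2\pi\i z})$, and on $\{\Im z<0\}$ one differentiates the $\Li_{2}$-plus-polynomial expressions and uses $-\pi^{2}(4z-2)/(2\pi\i)=\pi\i(2z-1)$. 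Since $\L_{1}$ and $\L_{2}$ are holomorphic (and, from $1-e^{\pm2\pi\i z}=2\sin(\pi z)\,e^{\pm\i(\pi z-\pi/2)}$ on $(0,1)$, manifestly continuous across the real axis), the two one-sided computations glue to the stated identities. The only real obstacle is the first route's interchange of $\frac{d}{dz}$ and $\int_{C_{\theta}}$; I would present that route, citing the estimates of Section~\ref{sec:proofs_lemmas}, since everything else is a one-line computation.
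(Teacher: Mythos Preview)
Your proposal is correct and follows exactly the paper's approach: the paper too differentiates under the integral sign in the defining formulas for $\L_{1}$ and $\L_{2}$, using $\frac{\partial}{\partial z}e^{(2z-1)x}=2x\,e^{(2z-1)x}$, and reads off the result directly. In fact the paper does not pause to justify the interchange of $\frac{d}{dz}$ with $\int_{C_{\theta}}$, so your dominated-convergence remark (and the alternative via Lemma~\ref{lem:L0_1_2}) actually supplies more detail than the original.
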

\begin{proof}
We have
\begin{equation*}
\begin{split}
  \frac{d\,\L_2}{d\,z}(z)
  &=
  \frac{\pi\i}{2}\int_{C_{\theta}}\left(\frac{d}{d\,z}\frac{e^{(2z-1)x}}{x^2\sinh(x)}\right)\,dx
  \\
  &=
  \pi\i\int_{C_{\theta}}\frac{e^{(2z-1)x}}{x\sinh(x)}\,dx
  \\
  &=
  -2\pi\i\L_1(z),
\end{split}
\end{equation*}
and
\begin{equation*}
\begin{split}
  \frac{d\,\L_1}{d\,z}(z)
  &=
  -\frac{1}{2}\int_{C_{\theta}}\left(\frac{d}{d\,z}\frac{e^{(2z-1)x}}{x\sinh(x)}\right)\,dx
  \\
  &=
  -\int_{C_{\theta}}\frac{e^{(2z-1)x}}{\sinh(x)}\,dx
  \\
  &=
  -\L_0(z),
\end{split}
\end{equation*}
completing the proof.
\end{proof}
\section{Summation}
In this section, we use the quantum dilogarithm $T_N(z)$ to express $J_N(E;e^{\xi/N})$ without the product of a sequence.
\par
Since the figure-eight knot $E$ is amphicheiral, its colored Jones polynomial is symmetric, that is, $J_{N}(E;q)=J_{N}(E;q^{-1})$.
So we have $J_{N}(E;e^{\xi/N})=J_{N}(E;e^{-\xi/N})$ and we do not need to consider the case where $\xi<0$.
\par
Recall that we choose a positive real number $\theta$ so that $\tan\theta<\frac{\pi}{\xi}$ and that we put $\gamma:=\frac{\xi}{2N\pi\i}$.
Putting $z:=\frac{\xi}{2\pi\i}(1-l/N)$ ($0<l<N$) in \eqref{eq:lem}, we have
\begin{equation*}
  \frac{\exp\left(T_{N}\left(\frac{\xi}{2\pi\i}(1-l/N)-\frac{\xi}{4N\pi\i}\right)\right)}
       {\exp\left(T_{N}\left(\frac{\xi}{2\pi\i}(1-l/N)+\frac{\xi}{4N\pi\i}\right)\right)}
  =
  1-e^{\xi(1-l/N)}
\end{equation*}
since
\begin{equation*}
  \Re\left(\frac{\xi}{2\pi\i}\left(1-\frac{l}{N}\right)e^{\theta\i}\right)
  =
  \frac{\xi}{2\pi}\left(1-\frac{l}{N}\right)\sin\theta
  <
  \frac{1}{2}\left(1-\frac{l}{N}\right)\cos\theta,
\end{equation*}
which is between $0$ and $\cos\theta$.
Therefore we have
\begin{equation*}
\begin{split}
  \prod_{l=1}^{k}\bigl(1-e^{(N-l)\xi/N}\bigr)
  &=
  \prod_{l=1}^{k}
  \frac{\exp\left(T_{N}\left(\frac{\xi}{2\pi\i}\left(1-\frac{2l+1}{2N}\right)\right)\right)}
       {\exp\left(T_{N}\left(\frac{\xi}{2\pi\i}\left(1-\frac{2l-1}{2N}\right)\right)\right)}
  \\
  &=
  \frac{\exp\left(T_{N}\left(\frac{\xi}{2\pi\i}(1-\frac{2k+1}{2N})\right)\right)}
       {\exp\left(T_{N}\left(\frac{\xi}{2\pi\i}(1-\frac{1}{2N})\right)\right)}.
\end{split}
\end{equation*}
\par
Similarly, putting $z:=\frac{\xi}{2\pi\i}(1+l/N)$ ($0<l<N$) in \eqref{eq:lem}, we have
\begin{equation*}
  \frac{\exp
        \left(
          T_{N}
          \left(
            \frac{\xi}{2\pi\i}(1+l/N)-\frac{\xi}{4N\pi\i}
          \right)
        \right)}
       {\exp
        \left(
          T_{N}
          \left(
            \frac{\xi}{2\pi\i}(1+l/N)+\frac{\xi}{4N\pi\i}
          \right)
        \right)}
  =
  1-e^{\xi(1+l/N)}
\end{equation*}
since
\begin{equation*}
  \Re\left(\frac{\xi}{2\pi\i}\left(1+\frac{l}{N}\right)e^{\theta\i}\right)
  =
  \frac{\xi}{2\pi}\left(1+\frac{l}{N}\right)\sin\theta
  <
  \frac{1}{2}\left(1+\frac{l}{N}\right)\cos\theta,
\end{equation*}
which is between $0$ and $\cos\theta$.
So we have
\begin{equation*}
\begin{split}
  \prod_{l=1}^{k}\bigl(1-e^{(N+l)\xi/N}\bigr)
  &=
  \prod_{l=1}^{k}
  \frac{\exp\left(T_{N}\left(\frac{\xi}{2\pi\i}\left(1+\frac{2l-1}{2N}\right)\right)\right)}
       {\exp\left(T_{N}\left(\frac{\xi}{2\pi\i}\left(1+\frac{2l+1}{2N}\right)\right)\right)}
  \\
  &=
  \frac{\exp\left(T_{N}\left(\frac{\xi}{2\pi\i}(1+\frac{1}{2N})\right)\right)}
       {\exp\left(T_{N}\left(\frac{\xi}{2\pi\i}(1+\frac{2k+1}{2N})\right)\right)}.
\end{split}
\end{equation*}
Therefore we have from \eqref{eq:J_N_fig8}
\begin{equation*}
\begin{split}
  &J_N\bigl(E;\exp(\xi/N)\bigr)
  \\
  &=
  \frac{\exp\left(T_{N}\left(\frac{\xi}{2\pi\i}(1+\frac{1}{2N})\right)\right)}
       {\exp\left(T_{N}\left(\frac{\xi}{2\pi\i}(1-\frac{1}{2N})\right)\right)}
  \\
  &\quad\times
  \sum_{k=0}^{N-1}
  e^{-k\xi}
  \frac{\exp\left(T_{N}\left(\frac{\xi}{2\pi\i}(1-\frac{2k+1}{2N})\right)\right)}
       {\exp\left(T_{N}\left(\frac{\xi}{2\pi\i}(1+\frac{2k+1}{2N})\right)\right)}.
\end{split}
\end{equation*}
We use the following lemma, a proof of which is given in Section~\ref{sec:proofs_lemmas}.
\begin{lem}\label{lem:T_special}
We have
\begin{equation*}
  \frac{\exp\left(T_{N}\left(\frac{\xi}{2\pi\i}(1+\frac{1}{2N})\right)\right)}
       {\exp\left(T_{N}\left(\frac{\xi}{2\pi\i}(1-\frac{1}{2N})\right)\right)}
  =
  \frac{1}{1-e^{\xi}}
  =
  \frac{-e^{-\xi/2}}{2\sinh(\xi/2)}.
\end{equation*}
\end{lem}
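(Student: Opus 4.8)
The plan is to read off Lemma~\ref{lem:T_special} directly from the corollary preceding it, i.e.\ from the functional equation~\eqref{eq:lem}, applied at the single point $z=\frac{\xi}{2\pi\i}$. Indeed, writing $\gamma=\frac{\xi}{2N\pi\i}$ as usual, one has $\frac{\xi}{2\pi\i}\cdot\frac{1}{2N}=\frac{\xi}{4N\pi\i}=\frac{\gamma}{2}$, so with $z_0:=\frac{\xi}{2\pi\i}$ the two arguments occurring in the statement are
\[
  \frac{\xi}{2\pi\i}\Bigl(1+\frac{1}{2N}\Bigr)=z_0+\frac{\gamma}{2},
  \qquad
  \frac{\xi}{2\pi\i}\Bigl(1-\frac{1}{2N}\Bigr)=z_0-\frac{\gamma}{2}.
\]
Thus the left-hand side of the lemma is exactly the reciprocal of the ratio appearing in \eqref{eq:lem} at $z=z_0$.

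Before invoking \eqref{eq:lem} I would check the admissibility condition $0<\Re(ze^{\theta\i})<\cos\theta$ for $z=z_0$ and also for $z_0\pm\gamma/2$ (so that $T_N$ is defined at these points). Since $\xi>0$, a short computation gives $\Re\bigl((z_0+s\gamma/2)e^{\theta\i}\bigr)=\frac{\xi}{2\pi}\bigl(1+\tfrac{s}{2N}\bigr)\sin\theta$ for $s\in\{-1,0,1\}$, which is positive; and the hypothesis $\tan\theta<\frac{\pi}{\xi}$ yields $\frac{\xi}{2\pi}\sin\theta<\frac12\cos\theta$, so, because the factors $1+\frac{s}{2N}$ stay in $[\tfrac34,\tfrac54]$ for $N\ge2$, all three quantities lie strictly between $0$ and $\cos\theta$. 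Hence \eqref{eq:lem} is legitimately applicable at $z=z_0$.

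Applying \eqref{eq:lem} then gives $\dfrac{\exp\bigl(T_N(z_0-\gamma/2)\bigr)}{\exp\bigl(T_N(z_0+\gamma/2)\bigr)}=1-e^{2\pi\i z_0}=1-e^{\xi}$, and taking reciprocals produces $\dfrac{1}{1-e^{\xi}}$. Finally I would factor $1-e^{\xi}=-e^{\xi/2}\bigl(e^{\xi/2}-e^{-\xi/2}\bigr)=-2e^{\xi/2}\sinh(\xi/2)$ to rewrite this as $\dfrac{-e^{-\xi/2}}{2\sinh(\xi/2)}$, which is the claimed identity. There is no genuine obstacle here: the only step requiring any care is the verification that $z_0$ and $z_0\pm\gamma/2$ lie in the strip where $T_N$ and the identity \eqref{eq:lem} are valid, and that is the one-line estimate from $\tan\theta<\pi/\xi$ indicated above.
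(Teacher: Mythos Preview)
Your proof is correct and is essentially the same argument as the paper's: both rest on the identity $T_N(z-\gamma/2)-T_N(z+\gamma/2)=\L_1(z)$ at $z=\frac{\xi}{2\pi\i}$. The only difference is cosmetic---you invoke the already-stated corollary~\eqref{eq:lem} (and take reciprocals), whereas the paper reopens the integral and quotes the $\Im z<0$ branch of Lemma~\ref{lem:L1_2} to obtain $\pi\i-\xi-\log(1-e^{-\xi})$ directly; your route is slightly more economical and your admissibility check for $z_0,\,z_0\pm\gamma/2$ is a nice touch that the paper leaves implicit.
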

\par
Now we define
\begin{equation*}
  f_{N}(z)
  :=
  \frac{1}{N}
  \left(
    T_{N}\left(\frac{\xi(1-z)}{2\pi\i}\right)
    -
    T_{N}\left(\frac{\xi(1+z)}{2\pi\i}\right)
  \right)
  -\xi z
  +2\pi\i z
\end{equation*}
so that
\begin{equation}\label{eq:Jones_sum}
  J_N\bigl(E;\exp(\xi/N)\bigr)
  =
  \frac{1}{2\sinh(\xi/2)}
  \sum_{k=0}^{N-1}\exp\left(N\times f_{N}\left(\frac{2k+1}{2N}\right)\right).
\end{equation}
Since $T_{N}(z)$ is defined for $z$ with $0<\Re(z e^{\theta\i})<\cos\theta$, the function $f_{N}(z)$ is defined for $z$ with
\begin{equation*}
  \left|\frac{\Im{z}}{\tan\theta}+\Re{z}\right|<1
\end{equation*}
from the assumption $\tan\theta<\frac{\pi}{\xi}$.
\par
From Proposition~\ref{prop:T_N_L_2}, $f_N(z)$ converges to the following function:
\begin{equation*}
  F(z)
  :=
  \frac{1}{\xi}
  \left(
    \L_2\left(\frac{\xi(1-z)}{2\pi\i}\right)
    -
    \L_2\left(\frac{\xi(1+z)}{2\pi\i}\right)
  \right)
  -\xi z+2\pi\i z
\end{equation*}
in the region
\begin{equation*}
  \left\{
    z\in\C\Bigm|
    \left|\frac{\Im{z}}{\tan\theta}+\Re{z}\right|
    \le
    1-\frac{2\pi\nu}{\xi\sin\theta},
    |\Re{z}|\le\frac{2\pi M}{\xi}-1
  \right\}.
\end{equation*}
Since $\Im\left(\frac{\xi(1\pm z)}{2\pi\i}\right)=\frac{-\xi}{2\pi}(1\pm\Re{z})$,
\begin{equation*}
  F(z)
  =
  \frac{1}{\xi}
  \left(
    \Li_2\left(e^{-\xi(1+z)}\right)
    -
    \Li_2\left(e^{-\xi(1-z)}\right)
  \right)
  +\xi z
\end{equation*}
when $-1<\Re{z}<1$ from \eqref{eq:L2}.
\begin{rem}\label{rem:F_real}
When $x$ is real and $-1<x<1$, then $F(x)$ is also real.
\end{rem}
From Lemma~\ref{lem:der_L}, the first and the second derivatives of $F(z)$ are given as follows:
\begin{align*}
  F'(z)
  &=
  \L_1\left(\frac{\xi(1-z)}{2\pi\i}\right)
  +
  \L_1\left(\frac{\xi(1+z)}{2\pi\i}\right)
  -\xi+2\pi\i,
  \\
  F''(z)
  &=
  \frac{\xi}{2\pi\i}
  \left(
    \L_0\left(\frac{\xi(1-z)}{2\pi\i}\right)
    -
    \L_0\left(\frac{\xi(1+z)}{2\pi\i}\right)
  \right)
  \\
  &=
  \frac{\xi(e^{-\xi z}-e^{\xi z})}{e^{\xi}+e^{-\xi}-e^{\xi z}-e^{-\xi z}}.
\end{align*}
Note that when $-1<\Re{z}<1$, we have
\begin{equation}\label{eq:F'}
\begin{split}
  F'(z)
  &=
  \log(1-e^{-\xi(1-z)})+\log(1-e^{-\xi(1+z)})+\xi
  \\
  &=
  \log(e^{\xi}+e^{-\xi}-e^{\xi z}-e^{-\xi z})
\end{split}
\end{equation}
from  \eqref{eq:L1}, since $\Im\left(\frac{\xi(1\pm z)}{2\pi\i}\right)=-\frac{\xi}{2\pi}(1\pm\Re{z})$.
\par
We assume that $\xi>\kappa$, where $\kappa:=\arccosh(3/2)=\log(\frac{3+\sqrt{5}}{2})=0.962\dots$.
Put
\begin{equation*}
  \varphi(\xi)
  :=
  \arccosh\left(\cosh(\xi)-\frac{1}{2}\right)
  =
  \log
  \left(
    \cosh(\xi)-\frac{1}{2}+\frac{1}{2}\sqrt{(2\cosh(\xi)-1)^2-4}
  \right).
\end{equation*}
Note that since $\xi>\kappa$, we have $\cosh(\xi)>\cosh(\varphi(\xi))$ and $\cosh(\xi)>\cosh(\kappa)=\frac{3}{2}$.
So we have $0<\varphi(\xi)<\xi$.
Note also that this definition is the same as that in Section~\ref{sec:introduction}.
\par
Since $0<\varphi(\xi)<\xi$, we have
\begin{align}
  F(\varphi(\xi)/\xi)
  &=
  \frac{1}{\xi}
  \left(
    \Li_2(e^{-\xi-\varphi(\xi)})-\Li_2(e^{-\xi+\varphi(\xi)})
  \right)
  +\varphi(\xi),
  \label{eq:F_phi}
  \\
  F'(\varphi(\xi)/\xi)
  &=0,
  \label{eq:F'_phi}
  \\
  F''(\varphi(\xi)/\xi)
  &=
  -\xi\sqrt{(2\cosh(\xi)-1)^2-4}.
  \label{eq:F''_phi}
\end{align}
The second equality follows since
\begin{equation*}
  e^{\varphi(\xi)}+e^{-\varphi(\xi)}
  =
  e^{\xi}+e^{-\xi}-1.
\end{equation*}
\par
Put
\begin{equation*}
  S(\xi)
  :=
  \xi F(\varphi(\xi)/\xi)
  =
  \Li_2(e^{-\xi-\varphi(\xi)})-\Li_2(e^{-\xi+\varphi(\xi)})
  +
  \xi\varphi(\xi).
\end{equation*}
Since
\begin{equation*}
\begin{split}
  \frac{d\,S(\xi)}{d\,\xi}
  &=
  (1+\varphi'(\xi))\log(1-e^{-\xi-\varphi(\xi)})
  -
  (1-\varphi'(\xi))\log(1-e^{-\xi+\varphi(\xi)})
  \\
  &\quad
  +\varphi(\xi)+\xi\varphi'(\xi)
  \\
  &=
  \log\frac{e^{\varphi(\xi)}-e^{-\xi}}{1-e^{-\xi+\varphi(\xi)}}
  +
  \varphi'(\xi)\log\left(e^{\xi}+e^{-\xi}-e^{\varphi(\xi)}-e^{-\varphi(\xi)}\right)
  \\
  &=
  \log\frac{e^{\varphi(\xi)}-e^{-\xi}}{1-e^{-\xi+\varphi(\xi)}}
  >0
\end{split}
\end{equation*}
and $S(\kappa)=0$, $S(\xi)>0$ for $\xi>\kappa$.
\par
Recalling that $F(x)\in\R$ when $x$ is real and $-1<x<1$, we have the following lemma.
\begin{lem}
As a function of a real variable $x$, the real-valued function $F(x)$ takes its maximum $S(\xi)/\xi$ at $x=\varphi(\xi)/\xi$ in the half open interval $[0,1)$.
Moreover, it is strictly increasing {\rm(}decreasing, respectively{\rm)} when $x<\varphi(\xi)/\xi$ {\rm(}$x>\varphi(\xi)/\xi$, respectively{\rm)}.
\end{lem}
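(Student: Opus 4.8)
The plan is to show that the real-valued function $F$ is strictly concave on $(0,1)$, so that its unique critical point there, namely $x=\varphi(\xi)/\xi$, is forced to be the global maximizer on $[0,1)$, with the claimed one-sided monotonicity read off from the sign of $F'$.

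First I would record, using Remark~\ref{rem:F_real}, that $F(x)$ is real for $x\in[0,1)$, and that it is smooth there: by \eqref{eq:F'} we have $F'(x)=\log\bigl(e^{\xi}+e^{-\xi}-e^{\xi x}-e^{-\xi x}\bigr)=\log\bigl(2\cosh\xi-2\cosh(\xi x)\bigr)$, whose argument is strictly positive on $[0,1)$ because $\cosh$ is strictly increasing on $[0,\infty)$ and $0\le\xi x<\xi$. Next I would use the explicit expression for the second derivative, $F''(x)=\dfrac{\xi\bigl(e^{-\xi x}-e^{\xi x}\bigr)}{e^{\xi}+e^{-\xi}-e^{\xi x}-e^{-\xi x}}=\dfrac{-2\xi\sinh(\xi x)}{2\cosh\xi-2\cosh(\xi x)}$, and observe that for $x\in(0,1)$ the numerator is negative (as $\xi>0$ and $\xi x>0$) while the denominator is positive by the estimate just mentioned; hence $F''(x)<0$ for every $x\in(0,1)$, which implies that $F'$ is strictly decreasing on $[0,1)$.

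Since $0<\varphi(\xi)<\xi$ gives $\varphi(\xi)/\xi\in(0,1)$, and $F'(\varphi(\xi)/\xi)=0$ by \eqref{eq:F'_phi}, the strict monotonicity of $F'$ forces $F'(x)>0$ for $0\le x<\varphi(\xi)/\xi$ and $F'(x)<0$ for $\varphi(\xi)/\xi<x<1$. Therefore $F$ is strictly increasing on $[0,\varphi(\xi)/\xi]$ and strictly decreasing on $[\varphi(\xi)/\xi,1)$, so on the half open interval $[0,1)$ it attains its maximum uniquely at $x=\varphi(\xi)/\xi$, and the maximum value is $F(\varphi(\xi)/\xi)=S(\xi)/\xi$ by the definition $S(\xi)=\xi F(\varphi(\xi)/\xi)$.

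There is no genuine obstacle here; the only points needing a little care are verifying that the argument of the logarithm in \eqref{eq:F'} (equivalently the denominator of $F''$) stays strictly positive on the whole half open interval $[0,1)$ -- it tends to $0^{+}$ only as $x\to1^{-}$, which makes $F'\to-\infty$ but does not affect monotonicity -- and recalling from the discussion preceding the lemma that $\varphi(\xi)/\xi$ lies strictly between $0$ and $1$, so that it is indeed an interior critical point rather than a boundary one.
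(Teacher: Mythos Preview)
Your proof is correct and follows essentially the same approach as the paper's: both arguments show that $F'$ is strictly decreasing on $[0,1)$ and vanishes at the interior point $\varphi(\xi)/\xi$, from which the monotonicity and maximum statements follow immediately. The only cosmetic difference is that the paper reads off the strict decrease of $F'$ directly from the formula $F'(x)=\log\bigl(e^{\xi}+e^{-\xi}-e^{\xi x}-e^{-\xi x}\bigr)$ (since $2\cosh(\xi x)$ is increasing for $x\ge0$), whereas you reach the same conclusion by computing $F''$ explicitly and checking its sign.
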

\begin{proof}
Since we have
\begin{equation*}
  F'(x)
  =
  \log(e^{\xi}+e^{-\xi}-e^{\xi x}-e^{-\xi x}),
\end{equation*}
we see that $F'(x)$ is strictly decreasing.
Since $F(0)=0$ and $F'(x)$ becomes $0$ when $x=\varphi(\xi)/\xi<1$, the lemma follows.
\end{proof}
We have the following corollary.
\begin{cor}
For any $c_{-}$ and $c_{+}$ with $0<c_{-}<\varphi(\xi)/\xi<c_{+}<1$, there exists $\delta>0$ such that $F(x)<S(\xi)/\xi-\delta$ if $x<c_{-}$ or $x>c_{+}$.
\end{cor}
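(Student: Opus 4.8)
This corollary is a direct quantitative consequence of the preceding lemma, so the plan is short: simply convert the two strict inequalities $F(c_-)<S(\xi)/\xi$ and $F(c_+)<S(\xi)/\xi$, which are forced by the strict monotonicity of $F$ on either side of its maximizer, into a single uniform gap. Throughout, $x$ is taken in the half open interval $[0,1)$ on which $F$ was analyzed in the lemma (this is also the only range relevant to the summation, where $x=\frac{2k+1}{2N}\in(0,1)$).

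First I would invoke the lemma: $F$ is strictly increasing on $[0,\varphi(\xi)/\xi]$ and strictly decreasing on $[\varphi(\xi)/\xi,1)$, with $F(\varphi(\xi)/\xi)=S(\xi)/\xi$. Hence, for $x\le c_-$ one has $F(x)\le F(c_-)$, and since $0<c_-<\varphi(\xi)/\xi$ lies strictly inside the region of increase, $F(c_-)<F(\varphi(\xi)/\xi)=S(\xi)/\xi$; symmetrically, for $x\ge c_+$ one has $F(x)\le F(c_+)<S(\xi)/\xi$ because $\varphi(\xi)/\xi<c_+<1$ lies strictly inside the region of decrease. Therefore
\begin{equation*}
  \delta_-:=\frac{S(\xi)}{\xi}-F(c_-)>0
  \qquad\text{and}\qquad
  \delta_+:=\frac{S(\xi)}{\xi}-F(c_+)>0 .
\end{equation*}

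Setting $\delta:=\frac{1}{2}\min\{\delta_-,\delta_+\}>0$, I would conclude: if $x\le c_-$ then $F(x)\le F(c_-)=S(\xi)/\xi-\delta_-\le S(\xi)/\xi-2\delta<S(\xi)/\xi-\delta$, and likewise $F(x)<S(\xi)/\xi-\delta$ when $x\ge c_+$. This is precisely the assertion. There is essentially no obstacle to overcome here: all of the analytic content (computing $F'$, locating the maximizer $\varphi(\xi)/\xi$, and establishing strict monotonicity on each side) was already carried out for the lemma, and the only mild point is to pick $\delta$ strictly below the minimum of the two one-sided gaps so that the conclusion is the strict inequality $F(x)<S(\xi)/\xi-\delta$ rather than merely $F(x)\le S(\xi)/\xi-\delta$; halving the minimum, as above, handles this cleanly.
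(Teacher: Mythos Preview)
Your proof is correct and is exactly the natural argument: the paper in fact does not give an explicit proof of the corollary at all, treating it as an immediate consequence of the preceding lemma, and what you wrote is precisely the routine unpacking of that consequence. There is nothing to add or correct.
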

So we have $e^{N\times F(x)}<e^{N(S(\xi)/\xi-\delta)}$ if $x\not\in[c_{-},c_{+}]$.
Since $f_N(x)$ uniformly converges to $F(x)$ from Proposition~\ref{prop:T_N_L_2}, we also have $\left|e^{N\times f_N(x)}\right|<e^{N(S(\xi)/\xi-\delta')}$ for some $\delta'>0$ if $x\not\in[c_{-},c_{+}]$ and $N$ is sufficiently large.
\par
Therefore we have
\begin{align*}
  \left|\sum_{0<k/N<c_{-}}\exp\left(N\times f_N\left(\frac{2k+1}{2N}\right)\right)\right|
  &<
  c_{-}Ne^{N(S(\xi)/\xi-\delta')}
  \\
  \intertext{and}
  \left|\sum_{c_{+}<k/N<1}\exp\left(N\times f_N\left(\frac{2k+1}{2N}\right)\right)\right|
  &<
  (1-c_{+})Ne^{N(S(\xi)/\xi-\delta')}.
\end{align*}
As a result, we have
\begin{equation*}
\begin{split}
  &\sum_{k=0}^{N-1}\exp\left(N\times f_N\left(\frac{2k+1}{2N}\right)\right)
  \\
  =&
  \sum_{c_{-}\le k/N\le c_{+}}\exp\left(N\times f_N\left(\frac{2k+1}{2N}\right)\right)
  +
  O\left(Ne^{N(S(\xi)/\xi-\delta')}\right).
\end{split}
\end{equation*}
From \eqref{eq:Jones_sum}, we also have
\begin{multline}\label{eq:J_summation}
  J_N\bigl(E;\exp(\xi/N)\bigr)
  \\
  =
  \frac{1}{2\sinh(\xi/2)}
  \sum_{c_{-}\le k/N\le c_{+}}\exp\left(N\times f_N\left(\frac{2k+1}{2N}\right)\right)
  +
  O\left(Ne^{N(S(\xi)/\xi-\delta')}\right).
\end{multline}
\section{Integration}\label{sec:integration}
In this section, we use the Poisson summation formula (see \cite[Proposition~4.2]{Ohtsuki:QT2016}) to change the summation in \eqref{eq:J_summation} into an integration.
Then by using the saddle point method (see also \cite[Proposition~3.2]{Ohtsuki:QT2016}) to prove Theorem~\ref{thm:main}.
\par
Define
\begin{equation*}
  \psi(z)
  :=
  F(z+\varphi(\xi)/\xi)-F(\varphi(\xi)/\xi)
\end{equation*}
in the region
\begin{equation}\label{eq:region_psi}
  \left\{
    z\in\C
    \Bigm|
    -\frac{\varphi(\xi)}{\xi}\le\Re{z}<1-\frac{\varphi(\xi)}{\xi},
    -1-\frac{\varphi(\xi)}{\xi}<\frac{\Im{z}}{\tan\theta}+\Re{z}<1-\frac{\varphi(\xi)}{\xi}
  \right\}.
\end{equation}
Then since $\psi(0)=0$, $\psi'(0)=0$, and $\psi''(0)=-\xi\sqrt{(2\cosh(\xi)-1)^2-4}$ from \eqref{eq:F'_phi} and \eqref{eq:F''_phi}, $\psi(z)$ is of the form
\begin{equation*}
  \psi(z)
  =
  \frac{-\xi}{2}\sqrt{(2\cosh(\xi)-1)^2-4}\times z^2+a_3z^3+a_4z^4+\cdots
\end{equation*}
in \eqref{eq:region_psi}.
\begin{lem}\label{lem:psi_negative}
If $x\ne0$ is real and satisfies $-\varphi(\xi)/\xi\le x<1-\varphi(\xi)/\xi$, then $\psi(x)<0$.
\end{lem}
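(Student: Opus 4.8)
The plan is to reduce the claim directly to the preceding lemma describing the behavior of the real function $F$ on the interval $[0,1)$. Recall that $\psi(z)=F(z+\varphi(\xi)/\xi)-F(\varphi(\xi)/\xi)$ and that $F(\varphi(\xi)/\xi)=S(\xi)/\xi$ by the definition of $S(\xi)$. Hence, writing $y:=x+\varphi(\xi)/\xi$, the inequality $\psi(x)<0$ is equivalent to $F(y)<S(\xi)/\xi$.

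First I would observe that the hypothesis $-\varphi(\xi)/\xi\le x<1-\varphi(\xi)/\xi$ says precisely that $y\in[0,1)$; since $0<\varphi(\xi)/\xi<1$, the value $y=\varphi(\xi)/\xi$ is attained exactly when $x=0$, so the assumption $x\ne0$ is equivalent to $y\ne\varphi(\xi)/\xi$. Moreover $F(y)$ is real by Remark~\ref{rem:F_real}, so $\psi(x)$ is a genuine real number and the asserted inequality makes sense.

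Then I would invoke the lemma stating that, as a function of the real variable, $F$ attains its maximum $S(\xi)/\xi$ on $[0,1)$ only at $y=\varphi(\xi)/\xi$, being strictly increasing on $[0,\varphi(\xi)/\xi)$ and strictly decreasing on $(\varphi(\xi)/\xi,1)$. Consequently, for $y\in[0,1)$ with $y\ne\varphi(\xi)/\xi$ we obtain the strict inequality $F(y)<F(\varphi(\xi)/\xi)=S(\xi)/\xi$, that is, $\psi(x)<0$.

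There is essentially no obstacle here: the only point needing a word of care is the bookkeeping that the admissible range of $x$ corresponds, under the shift $y=x+\varphi(\xi)/\xi$, to the half-open interval $[0,1)$ on which the monotonicity lemma was established, and that the strictness of the conclusion comes from the strict monotonicity of $F$ on both sides of its peak together with the hypothesis $x\ne0$.
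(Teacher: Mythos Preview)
Your argument is correct. The paper takes a slightly different route: rather than invoking the earlier lemma on $F$, it recomputes $\psi'(x)$ directly and factors it as
\[
  \psi'(x)=\log\bigl(e^{-\varphi(\xi)}(1-e^{\xi x})(e^{2\varphi(\xi)}-e^{-\xi x})+1\bigr),
\]
from which the sign of $\psi'$ on the interval is read off and the unique maximum at $x=0$ follows. Your approach is more economical, since it recognizes that $\psi$ is just $F$ shifted in the argument and lowered by the constant $F(\varphi(\xi)/\xi)$, so the monotonicity lemma for $F$ on $[0,1)$ transfers verbatim. The paper's computation yields a little extra information (the second zero of $\psi'$ at $x=-2\varphi(\xi)/\xi$, which lies outside the interval), but this is not used anywhere later.
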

\begin{proof}
Since $e^{\xi}+e^{-\xi}-e^{\varphi(\xi)}-e^{-\varphi(\xi)}=1$, we have
\begin{equation}\label{eq:psi'}
\begin{split}
  \psi'(x)
  &=
  \log(e^{\xi}+e^{-\xi}-e^{\xi x+\varphi(\xi)}-e^{-\xi x-\varphi(\xi)})
  \\
  &=
  \log(e^{\varphi(\xi)}+e^{-\varphi(\xi)}-e^{\xi x+\varphi(\xi)}-e^{-\xi x-\varphi(\xi)}+1)
  \\
  &=
  \log
  \left(
    e^{-\varphi(\xi)}(1-e^{\xi x})(e^{2\varphi(\xi)}-e^{-\xi x})+1
  \right)
\end{split}
\end{equation}
from \eqref{eq:F'}.
Therefore we see that $\psi'(x)=0$ if $x=0$ or $-2\varphi(\xi)/\xi$, $\psi'(x)>0$ if $-2\varphi(\xi)/\xi<x<0$, and $\psi'(x)<0$ otherwise.
\par
Therefore for $-\varphi(\xi)/\xi<x<0$, $\psi(x)$ is monotonically increasing and for $0<x<1-\varphi(\xi)/\xi$ it is monotonically decreasing.
So $\psi(x)$ takes its unique maximum $0$ at $x=0$, which shows that $\psi(x)<0$ when $x\ne0$.

\end{proof}
Now we use the following proposition.
\begin{prop}[{\cite[Proposition~4.2]{Ohtsuki:QT2016}}]\label{prop:Poisson}
Let $b_{-}$ and $b_{+}$ be real numbers with $b_{-}<0<b_{+}$.
Put
\begin{align*}
  \Lambda
  &:=
  \left\{
    \frac{k}{N}\Bigm| k\in\Z, b_{-}\le\frac{k}{N}\le b_{+}
  \right\},
  \\
  C
  &:=
  \{t\in\R\mid b_{-}\le t\le b_{+}\},
  \\
  D
  &:=
  \{z\in\C\mid\Re{\psi(z)}<0\}.
\end{align*}
Assume that $\psi(z)$ is a holomorphic function of the form
\begin{equation}\label{eq:f}
  \psi(z)
  =
  az^2+a_3z^3+a_4z^4+\cdots
\end{equation}
with $\Re(a)<0$, defined in a neighborhood $P$ of $0\in\C$ that includes the $\delta_0$-neighborhood $N_{\delta_0}$ of $0$ for $\delta_0>0$.
We choose $P$ so that the region $D\cap P$ has two connected components.
We also assume the following:
\begin{enumerate}
\item
$b_{-}$ and $b_{+}$ are in different components of $D\cap P$ and moreover $\Re{\psi(b_{\pm})}<-\varepsilon_0$ for some $\varepsilon_0>0$,
\item
Both $b_{-}$ and $b_{+}$ are in a connected component of
\begin{equation}\label{eq:R+}
  \{x+y\i\mid x\in[b_{-},b_{+}],y\in[0,\delta_0],\Re{\psi(x+y\i)}<2\pi y\}
\end{equation}
\item
Both $b_{-}$ and $b_{+}$ are in a connected component of
\begin{equation}\label{eq:R-}
  \{x-y\i\mid x\in[b_{-},b_{+}],y\in[0,\delta_0],\Re{\psi(x-y\i)}<2\pi y\}
\end{equation}
\end{enumerate}
Then there exists $\varepsilon>0$, depending on $a$ and $\varepsilon_0$, such that
\begin{equation*}
  \frac{1}{N}
  \sum_{z\in\Lambda}
  e^{N\psi(z)}
  =
  \int_{C}e^{N\psi(z)}\,dz
  +
  O(e^{-N\varepsilon}).
\end{equation*}
\end{prop}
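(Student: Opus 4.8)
The plan is to turn the sum into an integral by the Poisson summation formula and to show that, among the resulting Fourier modes, only the zeroth one fails to be exponentially small; hypotheses (1)--(3) are exactly what is needed to control the remaining modes.

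First I would pass from the sum over $\Lambda$ to a sum over $\Z$. Fix a smooth cut-off $\chi\colon\R\to[0,1]$ with $\chi\equiv1$ on $[b_-,b_+]$ and $\operatorname{supp}\chi\subset(b_--\eta,b_++\eta)$, where $\eta>0$ is small enough that $\psi$ is holomorphic on a complex neighborhood of $\operatorname{supp}\chi$ and, by continuity from hypothesis (1), $\Re\psi<-\varepsilon_0/2$ on the two collars $[b_--\eta,b_-]$ and $[b_+,b_++\eta]$. Since $|e^{N\psi(k/N)}|<e^{-N\varepsilon_0/2}$ at the $O(N)$ lattice points $k/N$ in these collars, replacing $\sum_{z\in\Lambda}$ by $\sum_{k\in\Z}\chi(k/N)$ costs only $O(e^{-N\varepsilon})$. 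Then the Poisson summation formula applied to $t\mapsto\chi(t)e^{N\psi(t)}$ gives
$$\frac1N\sum_{z\in\Lambda}e^{N\psi(z)}=\sum_{m\in\Z}\int_{\R}\chi(t)\,e^{N\psi(t)}e^{-2\pi imNt}\,dt+O(e^{-N\varepsilon}).$$
The $m=0$ term equals $\int_{\R}\chi(t)e^{N\psi(t)}\,dt=\int_C e^{N\psi(t)}\,dt+O(e^{-N\varepsilon})$, the error once more coming from $\Re\psi<-\varepsilon_0/2$ off $C$ on $\operatorname{supp}\chi$, so it remains to bound $\sum_{m\ne0}$ of these integrals by $O(e^{-N\varepsilon})$.

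For $m\ne0$ the idea is to deform the contour into the half-plane where $e^{-2\pi imNz}$ decays: downward into $\Im z<0$ for $m\ge1$, upward for $m\le-1$; I treat $m\ge1$, the other case being symmetric with (3) replaced by (2). On the two collars, where $\chi$ is not holomorphic, the integrand is pointwise $O(e^{-N\varepsilon_0/2})$ and two integrations by parts there yield $O(m^{-2}e^{-N\varepsilon_0/2})$ plus boundary terms at $b_\pm$. For the central piece $\int_{b_-}^{b_+}e^{N\psi(t)}e^{-2\pi imNt}\,dt$, holomorphy of $e^{N\psi}$ and Cauchy's theorem let me replace $[b_-,b_+]$ by a path $\Gamma$ that drops vertically from $b_-$, runs through the region $\{x-yi:x\in[b_-,b_+],\,0<y\le\delta_0,\,\Re\psi(x-yi)<2\pi y\}$---which by hypothesis (3) is connected and joins both endpoints---and climbs back to $b_+$. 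Along the main part of $\Gamma$,
$$\bigl|e^{N\psi(z)}e^{-2\pi imNz}\bigr|=e^{N(\Re\psi(z)-2\pi|\Im z|)}\,e^{-2\pi(m-1)N|\Im z|}\le e^{-N\varepsilon_1}\,e^{-2\pi(m-1)N|\Im z|},$$
where $-\varepsilon_1:=\max_{\Gamma}\bigl(\Re\psi(z)-2\pi|\Im z|\bigr)<0$ by compactness together with the strict inequalities of (1) and (3); the factor $e^{-N\varepsilon_1}$ handles the mode $m=1$, while the extra factor $e^{-2\pi(m-1)N|\Im z|}$ makes $\sum_{m\ge2}$ converge. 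Finally, the two vertical segments and the collar boundary terms at $b_\pm$ carry a factor $e^{N\psi(b_\pm)}=O(e^{-N\varepsilon_0})$, so---retaining the oscillatory phases $e^{-2\pi imNb_\pm}$ rather than passing to absolute values, so that the residual $m$-sums stay (conditionally) convergent---these contributions are also $O(e^{-N\varepsilon})$. Taking $\varepsilon$ below $\varepsilon_0/3$ and $\varepsilon_1$ completes the argument.

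The hard part is exactly this last bookkeeping. Because $\psi$ is defined only near $C$ and near $0$, every deformation must stay inside its domain; and the endpoint contributions, estimated too crudely, produce a divergent $\sum_m1/m$, so one must keep the phases and use that the dangerous terms are multiplied by $e^{N\psi(b_\pm)}=O(e^{-N\varepsilon_0})$. This is what dictates the precise form of the hypotheses: the constant $2\pi$ in (2) and (3) is matched to the oscillation $e^{-2\pi|m|N|\Im z|}$ of the $m$-th mode, the requirement that $b_-$ and $b_+$ lie in the \emph{same} component of those regions is what allows a single deformation to carry the whole central integral, and hypothesis (1) controls the vertical pieces; the dependence of $\varepsilon$ on $a$ and $\varepsilon_0$ enters through the size near $0$ of the region $D$, which is governed by $\Re(a)$.
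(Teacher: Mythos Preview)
The paper does not prove Proposition~\ref{prop:Poisson}; it is quoted from \cite[Proposition~4.2]{Ohtsuki:QT2016} and used as a black box, the paper's own work being the verification (Lemma~\ref{lem:Poisson_psi}) that the particular $\psi$ coming from the figure-eight knot satisfies the hypotheses. So there is no in-paper proof to compare your sketch against; what you have outlined is a proof of the cited result itself.

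Your route---Poisson summation, then push the $m\neq0$ modes into the half-plane where $e^{-2\pi imNz}$ decays---is the natural one, and you read the hypotheses correctly: the constant $2\pi$ in (2) and (3) is calibrated to the $m=\pm1$ modes, the connectedness lets a single deformed path carry the whole central integral, and (1) supplies the decay on the vertical pieces near $b_\pm$.

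The soft spot you already flag is the real issue. After taking absolute values, the vertical/endpoint pieces contribute $O(e^{-N\varepsilon_0}/m)$ per mode, which does not sum; and your fix of retaining the phases $e^{-2\pi imNb_\pm}$ and appealing to conditional convergence is fragile, since nothing in the hypotheses keeps $Nb_\pm$ away from integers (so $\sum_{m\ge1}e^{-2\pi imNb_\pm}/m=-\log(1-e^{-2\pi iNb_\pm})$ is not uniformly bounded). A cleaner way to close this is to sum the geometric series \emph{before} estimating: replace $\sum_{m\ge1}\int_\Gamma e^{N\psi(z)-2\pi imNz}\,dz$ by the single integral $\int_\Gamma e^{N\psi(z)}\bigl(e^{2\pi iNz}-1\bigr)^{-1}\,dz$, equivalently start from an Abel--Plana or cotangent-kernel representation of the finite sum rather than Poisson. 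On the part of $\Gamma$ with $\Im z\le-\delta'<0$ the kernel is uniformly bounded, while on the short vertical legs near $b_\pm$ one uses $|e^{2\pi iNz}-1|\ge e^{-2\pi N\Im z}-1$ together with $\Re\psi<-\varepsilon_0$; the harmless logarithmic factor that appears is absorbed by $e^{-N\varepsilon_0}$. With this adjustment your argument is complete.
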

We will show that $\psi(z)$ satisfies the assumptions of Proposition~\ref{prop:Poisson}.
\par
Put $b_{-}:=-\varphi(\xi)/(2\xi)$, $b_{+}:=(1-\varphi(\xi)/\xi)/2$, $\delta_0:=\frac{1}{2}(1-\varphi(\xi)/\xi)\sin\theta$.
Define the following regions:
\begin{align*}
  P
  &:=
  \left\{
    z\in\C
    \mid
    -\frac{\varphi(\xi)}{\xi}<\Re{z}<1-\frac{\varphi(\xi)}{\xi},
    -1-\frac{\varphi(\xi)}{\xi}<\frac{\Im{z}}{\tan\theta}+\Re{z}<1-\frac{\varphi(\xi)}{\xi}
  \right\},
  \\
  D_{-}
  &:=
  \{z\in\C\mid\Re{\psi(z)}<0,\Re{z}<0\},
  \\
  D_{+}
  &:=
  \{z\in\C\mid\Re{\psi(z)}<0,\Re{z}>0\}.
\end{align*}
Note that $P$ is just the region \eqref{eq:region_psi} and so $\psi(z)$ is holomorphic in $P$.
See Figure~\ref{fig:P}.
\begin{figure}[h!]
\includegraphics{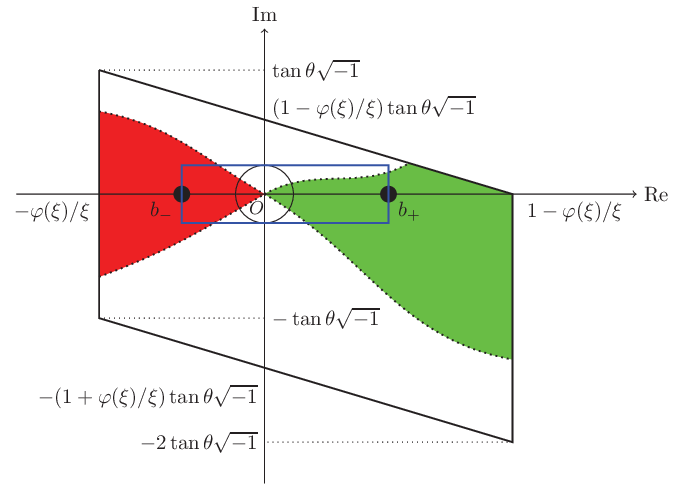}
\caption{The parallelogram is the boundary of $P$, the circle is the boundary of $N_{\delta_0}$, the red region is $D_{-}$, and the green region is $D_{+}$.
The cyan rectangle indicates the boundary of $R_{+}\cup R_{-}$} (see below).
\label{fig:P}
\end{figure}
\par
Then, we show the following lemma, from which the assumptions of Proposition~\ref{prop:Poisson} hold.
\begin{lem}\label{lem:Poisson_psi}
We assume that $\sin{\theta}<\frac{\varphi(\xi)}{\xi-\varphi(\xi)}$ and that $\tan\theta<\frac{\pi}{2\xi}$.
Then, the function $\psi(z)$ satisfies the following:
\begin{enumerate}
\item[(i).]
$\psi(z)$ is a holomorphic function of the form \eqref{eq:f} defined in the $\delta_0$-neighborhood $N_{\delta_0}$ of $0\in\C$.
\item[(ii).]
Both $D_{+}\cap P$ and $D_{-}\cap P$ are connected, and $b_{\pm}\in D_{\pm}\cap P$.
Moreover, $\Re\psi(b_{\pm})<-\varepsilon_0$ for some $\varepsilon_0>0$.
\item[(iii).]
Condition (2) in Proposition~\ref{prop:Poisson} is satisfied.
\item[(iv).]
Condition (3) in Proposition~\ref{prop:Poisson} is satisfied.
\end{enumerate}
\end{lem}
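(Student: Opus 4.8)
The plan is to verify the four items in turn, using the explicit formulas \eqref{eq:psi'}, \eqref{eq:F'} for $\psi'$ together with the sign analysis from the proof of Lemma~\ref{lem:psi_negative}, and to extract the size information $\Re\psi<0$ off the real axis by a convexity/monotonicity argument in the imaginary direction. Item (i) is essentially already done: $\psi$ is holomorphic on $P$ (which is the region \eqref{eq:region_psi}), and by construction $\psi(0)=\psi'(0)=0$, $\psi''(0)=-\xi\sqrt{(2\cosh\xi-1)^2-4}<0$, so the power series \eqref{eq:f} holds with $a=\psi''(0)/2$, $\Re a<0$; one just checks $N_{\delta_0}\subset P$, which follows from $\delta_0=\tfrac12(1-\varphi(\xi)/\xi)\sin\theta$ and the shape of the parallelogram $P$ (the distance from $0$ to $\partial P$ in any direction is at least $\delta_0$ once $\varphi(\xi)/\xi<1$; this uses $\tan\theta$ small).

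For (ii), the key computation is to locate the real points where $\Re\psi$ changes sign. On the real axis $\psi$ is real and, by Lemma~\ref{lem:psi_negative}, $\psi(x)<0$ for all real $x\ne0$ in $[-\varphi(\xi)/\xi,1-\varphi(\xi)/\xi)$; in particular $\psi(b_{\pm})<0$, so there is $\varepsilon_0>0$ with $\Re\psi(b_\pm)<-\varepsilon_0$, and $b_-\in D_-$, $b_+\in D_+$. The connectedness of $D_\pm\cap P$ is the substantive point: near $0$ the set $\{\Re\psi<0\}$ looks like the complement of the two sectors where $\Re(az^2)>0$, i.e.\ a neighbourhood of the two rays $\Re z=0$ removed, so $\{\Re\psi<0\}\cap P$ near $0$ has exactly two components distinguished by the sign of $\Re z$; away from $0$ one shows $\Re\psi$ stays negative on the relevant portions of $P$ by writing, for $z=x+y\i$,
\[
  \frac{\partial}{\partial y}\Re\psi(x+y\i)
  =
  -\Im\psi'(x+y\i)
\]
and using \eqref{eq:psi'} (more precisely its analytic continuation) to control the sign, together with the hypothesis $\sin\theta<\varphi(\xi)/(\xi-\varphi(\xi))$, which guarantees that the vertical slices through $P$ stay inside the domain where the $\log$ in \eqref{eq:F'} has the expected branch. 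This reduces connectedness of $D_\pm\cap P$ to the one–dimensional statement that along each such slice $\Re\psi$ is monotone in $y$ until it becomes negative and then stays negative, so each component is "star-shaped" toward the real segment $(b_-,0)$ resp.\ $(0,b_+)$.

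For (iii) and (iv) (conditions (2) and (3) of Proposition~\ref{prop:Poisson}), we must show that the horizontal segments $[b_-,b_+]\pm$ small imaginary part can be pushed into the regions \eqref{eq:R+}, \eqref{eq:R-}, i.e.\ that there is a height $y\in(0,\delta_0]$ with $\Re\psi(x\pm y\i)<2\pi y$ for all $x\in[b_-,b_+]$, and that the resulting path is connected (automatic, it is a horizontal segment). Here the point is that $\Re\psi(x)\le 0$ on $[b_-,b_+]$ with strict negativity except at $x=0$, and near $x=0$ we have $\Re\psi(x+y\i)\approx \Re(a)(x^2-y^2)$, so choosing $y$ of a fixed small size makes $\Re\psi(x+y\i)\le \Re(a)(x^2-y^2)+o(\cdot)$, which is $<2\pi y$ provided $y$ is small enough relative to $|\Re a|$ — and the uniform bound away from $x=0$ comes from compactness plus $\Re\psi(x)<0$ there. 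The hypothesis $\tan\theta<\frac{\pi}{2\xi}$ is used to keep the tilted segments inside $P$ and to ensure the $2\pi y$ comparison (which comes from the Poisson kernel) has the right slope relative to the boundary of $P$. The main obstacle I expect is (ii), the global connectedness of $D_\pm\cap P$: the local picture near $0$ is standard, but ruling out an extra component of $\{\Re\psi<0\}$ pinching off near the far vertical sides of the parallelogram requires the careful branch bookkeeping in \eqref{eq:psi'}–\eqref{eq:F'} and is exactly where the two quantitative hypotheses on $\theta$ are consumed.
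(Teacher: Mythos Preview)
Your outline has the right skeleton (Taylor form at $0$, vertical-slice monotonicity for connectedness, then a path in the rectangle for conditions (2)--(3)), but the two hypotheses on $\theta$ are assigned to the wrong steps, and this matters.

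In the paper, the assumption $\sin\theta<\varphi(\xi)/(\xi-\varphi(\xi))$ is used \emph{only} in (i): the distances from $0$ to the four sides of the parallelogram $P$ are $(1-\varphi(\xi)/\xi)\sin\theta$, $(1+\varphi(\xi)/\xi)\sin\theta$, $\varphi(\xi)/\xi$, and $1-\varphi(\xi)/\xi$, and the inequality is exactly what makes the first of these (which equals $2\delta_0$) smaller than the third, so $N_{\delta_0}\subset P$. It plays no role in branch control for \eqref{eq:psi'}. Conversely, the assumption $\tan\theta<\pi/(2\xi)$ is \emph{not} about keeping segments inside $P$ or about slope comparison with the Poisson kernel; it is the analytic engine behind (ii), (iii) and (iv). Writing $g(x,y):=e^{\xi}+e^{-\xi}-e^{\xi(x+y\i)+\varphi(\xi)}-e^{-\xi(x+y\i)-\varphi(\xi)}$, one has $\frac{\partial}{\partial y}\Re\psi(x+y\i)=-\arg g(x,y)$. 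Since any $z\in P$ has $|y|\le 2\tan\theta<\pi/\xi$, we get $|\xi y|<\pi$, so $\sin(\xi y)$ has the same sign as $y$; together with $\Re g>0$ and $\Im g=-2\sinh(\xi x+\varphi(\xi))\sin(\xi y)$, this pins down the quadrant of $g$, hence the sign and size of $\arg g$. Without the $\tan\theta$ bound one cannot conclude the monotonicity you need.

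A second concrete slip is the direction of that monotonicity in (ii). You write that along a vertical slice ``$\Re\psi$ is monotone in $y$ until it becomes negative and then stays negative''. The actual picture is the reverse: on the real axis $\Re\psi(x)=\psi(x)<0$ already (Lemma~\ref{lem:psi_negative}), and $\Re\psi(x+y\i)$ \emph{increases} as $|y|$ grows (consistent with $\Re(az^2)=|a|(y^2-x^2)$ near $0$). Thus $\{y:\Re\psi(x+y\i)<0\}\cap P$ is an open interval containing $0$; that is what gives connectedness of $D_\pm\cap P$.

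For (iii)--(iv), your local-Taylor-plus-compactness idea can be pushed through, but the paper's route is both shorter and uses the same computation already done for (ii): since $g(x,y)$ lies in the fourth (resp.\ first) quadrant for $y>0$ (resp.\ $y<0$), one has $|\arg g|<\pi/2<2\pi$, so $\frac{\partial}{\partial y}\bigl(\Re\psi(x+y\i)\mp 2\pi y\bigr)$ has a definite sign on all of $R_\pm$. Combined with $\Re\psi(x)\le 0$ at $y=0$, this gives $\Re\psi(x\pm y\i)<2\pi y$ on the whole rectangle $R_\pm\setminus\{0\}$, which immediately yields the connecting path. Your argument, by contrast, never invokes either hypothesis on $\theta$ in this step, which is a sign that the quantitative input is being placed elsewhere than where it is actually needed.
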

\begin{proof}
\quad\par
\begin{enumerate}
\item[(i).]
The distances to the upper side, the lower side, the left side, and the right side of $P$ from the origin $O$ is $(1-\varphi(\xi)/\xi)\sin\theta$, $(1+\varphi(\xi)/\xi)\sin\theta$, $\varphi(\xi)/\xi$ and $1-\varphi(\xi)/\xi$, respectively.
Since $(1-\varphi(\xi)/\xi)\sin\theta<\varphi(\xi)/\xi$ from the assumption $\sin\theta<\frac{\varphi}{\xi-\varphi(\xi)}$, $\delta_0$ is less than these distances.
So $N_{\delta_0}$ is contained in the parallelogram $P$.
Moreover since $\psi''(0)=F''(\varphi(\xi)/\xi)=-\xi\sqrt{(2\cosh(\xi)-1)^2-4}<0$, $\psi(z)$ is of the form \eqref{eq:f} with $a:=-\frac{\xi}{2}\sqrt{(2\cosh(\xi)-1)^2-4}$.
\item[(ii).]
We first show that $b_{\pm}\in D_{\pm}\cap P$.
It is clear that $b_{-}<0$, $b_{+}>0$, and $b_{\pm}\in P$.
The inequality $\Re\psi(b_{\pm})<0$ follows from Lemma~\ref{lem:psi_negative}.
So, we can choose $\varepsilon_0>0$ so that $\Re\psi(b_{\pm})<-\varepsilon_0$.
\par
Next, we will show that for each $x\ne0$ with $-\varphi(\xi)/\xi<x<1-\varphi(\xi)/\xi$, the set $\{y\in\R\mid\Re\psi(x+y\i)<0\}\cap P$ is an open interval containing $0$.
Then, we can see that $D_{\pm}\cap P$ is connected.
\par
If we put
\begin{equation*}
  g(x,y)
  :=
  e^{\xi}+e^{-\xi}-e^{\xi(x+y\i)+\varphi(\xi)}-e^{-\xi(x+y\i)-\varphi(\xi)},
\end{equation*}
then we have
\begin{equation}\label{eq:arg}
  \frac{d}{d\,y}\Re\psi(x+y\i)
  =
  -\arg g(x,y)
\end{equation}
from \eqref{eq:psi'}.
Now we have
\begin{align*}
  \Re g(x,y)
  &=
  2\cosh{\xi}
  -
  2\cosh\left(\xi\left(x+\frac{\varphi(\xi)}{\xi}\right)\right)\cos(\xi y),
  \intertext{and}
  \Im g(x,y)
  &=
  -\sinh\left(\xi\left(x+\frac{\varphi(\xi)}{\xi}\right)\right)\sin(\xi y).
  \notag
\end{align*}
\par
We can see that $\Re g(x,y)$ is positive since $0<x+\varphi(\xi)/\xi<1$.
We will show that $\Im g(x,y)<0$ ($\Im g(x,y)>0$) if $y>0$ ($y<0$, respectively).
If $x+y\i\in P$, then $-2\tan\theta<y<\tan\theta$ (see Figure~\ref{fig:P}).
Together with the assumption $\tan\theta<\frac{\pi}{2\xi}$, we conlude that $|y|\le2\tan\theta<\pi/\xi$, and so $|\xi y|<\pi$.
This implies that $\sin(\xi y)>0$ ($\sin(\xi y)<0$, respectively) if $y>0$ ($y<0$, respectively).
Since $x+\varphi(\xi)/\xi>0$, we see that $\Im g(x,y)<0$ ($\Im g(x,y)>0$) if $y>0$ ($y<0$, respectively).
\par
Therefore $\arg{g(x,y)}$ is positive (negative, respectively) when $y<0$ ($y>0$, respectively).
It follows that if $y>0$ ($y<0$, respectively), $\Re\psi(x+y\i)$ is monotonically increasing (decreasing, respectively) with respect to $y$.
Since $\Re\psi(x)<0$ from Lemma~\ref{lem:psi_negative}, we see that $\{y\in\R\mid\Re\psi(x+y\i)<0\}\cap P$ is an open interval containing $0$.
\par
Therefore, we conclude that $D_{\pm}\cap P$ is connected.
\item[(iii).]
We will show that the region $R_{+}:=[b_{-},b_{+}]\times[0,\delta_0]\setminus\{O\}$ is contained in the region $\{x+y\i\in\C\mid\Re\psi(x+y\i)<2\pi y\}$, which means that the region \eqref{eq:R+} equals $R_{+}$.
Then we can find a path connecting $b_{-}$ and $b_{+}$ in $R_{+}$, proving Condition (2).
\par
In fact, we can show if $b_{-}\le x\le b_{+}$ and $0\le y\le\delta_0$ ($(x,y)\ne(0,0)$), then $\Re\psi(x+y\i)-2\pi y<0$ as follows.
\par
When $y>0$, then $\Im g(x,y)<0$ from the argument in (ii), and so $g(x,y)$ is in the fourth quadrant in the complex plane because $\Re g(x,y)>0$.
Therefore we have $0>\arg g(x,y)>-\pi/2$.
This shows that $\frac{d}{d\,y}\bigl(\Re\psi(x+y\i)-2\pi y\bigr)<0$ when $y>0$ from \eqref{eq:arg}, and so we conclude that $\Re\psi(x+y\i)-2\pi y<0$ since $\Re\psi(x)\le0$ from Lemma~\ref{lem:psi_negative}.
\item[(iv).]
We will show that the region $R_{-}:=[b_{-},b_{+}]\times[-\delta_0,0]\setminus\{O\}$ is in the region $\{x+y\i\in\C\mid\Re\psi(x+y\i)<-2\pi y\}$, which means that the region \eqref{eq:R-} equals $R_{-}\cup\{O\}$.
Then we can find a path connecting $b_{-}$ and $b_{+}$ in $R_{-}$, proving Condition (3).
\par
We will show if $b_{-}\le x\le b_{+}$ and $-\delta_0\le y\le0$ ($(x,y)\ne(0,0)$), then $\Re\psi(x+y\i)+2\pi y<0$.
\par
When $y<0$, then $\Im g(x,y)>0$ from the argument in (ii), and so $g(x,y)$ is in the first quadrant because $\Re g(x,y)>0$.
Therefore we have $0<\arg g(x,y)<\pi/2$.
This shows that $\frac{d}{d\,y}\bigl(\Re\psi(x+y\i)+2\pi y\bigr)>0$ when $y<0$, and so we conclude that $\Re\psi(x+y\i)+2\pi y<0$ since $\Re\psi(x)\le0$ from Lemma~\ref{lem:psi_negative}.
\end{enumerate}
\end{proof}
Now we can prove Theorem~\ref{thm:main}.
\begin{proof}[Proof of Theorem~\ref{thm:main}]
From Proposition~\ref{prop:Poisson}, there exists $\varepsilon>0$ such that
\begin{equation*}
  \frac{1}{N}
  \sum_{-\varphi(\xi)/(2\xi)\le k/N\le(1-\varphi(\xi)/\xi)/2}
  e^{N\psi(k/N)}
  \\
  =
  \int_{-\varphi(\xi)/(2\xi)}^{(1-\varphi(\xi)/\xi)/2}
  e^{N\psi(z)}\,dz
  +
  O\left(e^{-N\varepsilon}\right).
\end{equation*}
Since $f_N(z+\varphi(\xi)/\xi+1/(2N))-S(\xi)/\xi$ uniformly converges to $\psi(z)$, we have
\begin{multline*}
  \frac{1}{N}
  \sum_{-\varphi(\xi)/(2\xi)\le k/N\le(1-\varphi(\xi)/\xi)/2}
  e^{N\left(f_N(k/N+\varphi(\xi)/\xi+1/(2N))-S(\xi)/\xi\right)}
  \\
  =
  \int_{-\varphi(\xi)/(2\xi)}^{(1-\varphi(\xi)/\xi)/2}
  e^{N\psi(z)}\,dz
  +O(e^{-N\varepsilon})
\end{multline*}
from \cite[Remark~4.4]{Ohtsuki:QT2016}.
Putting $l/N:=k/N+\varphi(\xi)/\xi$, we also have
\begin{multline*}
  \sum_{-\varphi(\xi)/(2\xi)\le k/N\le(1-\varphi(\xi)/\xi)/2}
  e^{N\left(f_N\left(k/N+\varphi(\xi)/\xi+1/(2N)\right)-S(\xi)/\xi\right)}
  \\
  =
  e^{-N S(\xi)/\xi}
  \sum_{\varphi(\xi)/(2\xi)\le l/N\le(1+\varphi(\xi)/\xi)/2}
  \exp\left(N\times f_N\left(\frac{2l+1}{2N}\right)\right).
\end{multline*}
Thus we obtain
\begin{multline}\label{eq:Poisson_f_N}
  \sum_{\varphi(\xi)/(2\xi)\le k/N\le(1+\varphi(\xi)/\xi)/2}
  \exp\left(N\times f_N\left(\frac{2k+1}{2N}\right)\right)
  \\
  =
  N\times e^{N S(\xi)/\xi}
  \left(
    \int_{-\varphi(\xi)/(2\xi)}^{(1-\varphi(\xi)/\xi)/2}
    e^{N\psi(z)}\,dz
    +
    O\left(e^{-N\varepsilon}\right)
  \right).
\end{multline}
Now by using the saddle point method \cite[Proposition~3.2]{Ohtsuki:QT2016} (see also \cite[Remark~3.3]{Ohtsuki:QT2016}), we have
\begin{equation}\label{eq:saddle}
  \int_{-\varphi(\xi)/(2\xi)}^{(1-\varphi(\xi)/\xi)/2}
  e^{N\psi(z)}\,dz
  =
  \frac{\sqrt{\pi}}{\sqrt{\frac{\xi}{2}\sqrt{(2\cosh(\xi)-1)^2-4}}\sqrt{N}}
  \left(
    1+O(N^{-1})
  \right).
\end{equation}
Putting $c_{-}:=\varphi(\xi)/(2\xi)$ and $c_{+}:=(1+\varphi(\xi)/\xi)/2$ in \eqref{eq:J_summation}, we finally have
\begin{equation*}
\begin{split}
  &J_N\bigl(E;\exp(\xi/N)\bigr)
  \\
  =&
  \frac{N\times e^{N S(\xi)/\xi}}{2\sinh(\xi/2)}
  \left(
    \int_{-\varphi(\xi)/(2\xi)}^{(1-\varphi(\xi)/\xi)/2}
    e^{N\psi(z)}\,dz
    +O\left(e^{-N\min\{\varepsilon,\delta'\}}\right)
  \right)
  \\
  =&
  \frac{\sqrt{\pi}\sqrt{N} e^{N\times S(\xi)/\xi}}
       {2\sinh(\xi/2)\sqrt{\frac{\xi}{2}\sqrt{(2\cosh(\xi)-1)^2-4}}}
  \left(1+O(N^{-1})\right).
\end{split}
\end{equation*}
where we use \eqref{eq:Poisson_f_N} at the first equality and \eqref{eq:saddle} at the second.
Letting $T(\xi)$ denote $\frac{2}{\sqrt{(2\cosh(\xi)-1)^2-4}}$, we have
\begin{equation*}
  J_N\bigl(E;\exp(\xi/N)\bigr)
  =
  \frac{\sqrt{\pi}}{2\sinh(\xi/2)}
  \sqrt{T(\xi)}
  \sqrt{\frac{N}{\xi}}
  \times
  e^{\frac{N}{\xi}S(\xi)}
  \left(1+O(N^{-1})\right).
\end{equation*}
Since $J_{N}(E;e^{-\xi/N})=J_{N}(E;e^{\xi/N})$ we obtain the required formula.
\end{proof}
\section{Topological interpretations}\label{sec:CST}
In this section we give topological interpretations for $T(\xi)$ and $S(\xi)$.
\subsection{Representations}
Let $X$ be the complement of the open tubular neighborhood of $E\subset S^3$.
The fundamental group $\pi_1(X)$ is presented as
\begin{equation*}
  \langle
    x,y
    \mid
    wx=yw
  \rangle,
\end{equation*}
with $w:=xy^{-1}x^{-1}y$.
Let $\rho$ be a non-Abelian representation of $\pi_1(X)$ to $\SL(2;\C)$ given by
\begin{equation}\label{eq:presentation_F_2}
  \rho(x)
  =
  \begin{pmatrix}e^{\xi/2}&1\\0&e^{-\xi/2}\end{pmatrix},
  \quad
  \rho(y)
  =
  \begin{pmatrix}e^{\xi/2}&0\\-d&e^{-\xi/2}\end{pmatrix},
\end{equation}
where $d$ annihilates the Riley polynomial
\begin{equation}\label{eq:Riley}
  d^2-(2\cosh(\xi)-3)d-2\cosh(\xi)+3.
\end{equation}
The preferred longitude $\lambda$ is presented by $y^{-1}xyx^{-2}yxy^{-1}$ and it is sent to
\begin{equation*}
  \rho(\lambda)
  =
  \begin{pmatrix}
    e^{\eta/2}&\ast \\
    0          &e^{-\eta/2}
  \end{pmatrix},
\end{equation*}
where
\begin{multline*}
  \eta
  :=
  \log
  \left(
    \frac{1}{2}
    \left(\vphantom{\frac{1}{2}}
      e^{2\xi}-e^{\xi}-e^{-\xi}+e^{-2\xi}-2
    \right.
  \right.
  \\
  \left.
    \left.
      +(e^{-\xi}-e^{\xi})\sqrt{(e^{\xi}+e^{-\xi}-3)(e^{\xi}+e^{-\xi}+1)}
    \right)
  \right).
\end{multline*}
\subsection{Adjoint Reidemeister torsion}
For a representation $\rho\colon\pi_1(X)\to\SL(2;\C)$, one can consider the cochain complex $C^{\ast}(X;\mathfrak{sl}(2;\C)_{\rho}):=\Hom_{\Z[\pi_1(X)]}(C_{\ast}(\tilde{X};\Z),\mathfrak{sl}(2;\C))$ twisted by the adjoint action of $\rho$.
Here $\tilde{X}$ is the universal cover of $X$, $\pi_1(X)$ acts on $\tilde{X}$ as the deck transformation, and the Lie algebra $\mathfrak{sl}(2;\C)$ is regarded as a $\Z[\pi_1(X)]$-module by the adjoint action of $\rho$.
The Reidemeister torsion $\Tor_{\mu}(\rho)\in\C$ associated with the meridian $\mu$, twisted by the adjoint action of $\rho$ is defined as the torsion of the cochain complex $C^{\ast}(X;\mathfrak{sl}(2;\C)_{\rho})$.
The following formula is known for the case of the figure-eight knot:
\begin{equation*}
  {\Tor}_{\mu}(\rho)
  =
  \pm
  \frac{2}{\sqrt{(e^{\xi}+e^{-\xi}+1)(e^{\xi}+e^{\xi}-3)}}.
\end{equation*}
See \cite{Porti:MAMCAU1997,Dubois:CANMB2006,Murakami:JTOP2013}.
\subsection{Chern--Simons invariant}
Let $M$ be a three-manifold with boundary a torus $T$, and $\{\mu,\lambda\}$ be generators of $\pi_1(T)$.
For a representation $\rho\colon\pi_1(M)\to\SL(2;\C)$, we can define the Chern--Simons invariant as follows.
\par
Let $A$ be an $\sl(2;\C)$-valued $1$-form $A$ on $M$ that defines the flat connection corresponding to $\rho$.
Assume that $\rho\Bigm|_{T}$ is diagonalizable for simplicity.
Then by a suitable conjugation, one has
\begin{equation*}
  \rho(\mu)
  =
  \begin{pmatrix}
    e^{2\pi\i\alpha}& 0               \\
    0               & e^{-2\pi\i\alpha}
  \end{pmatrix},
  \quad
  \rho(\lambda)
  =
  \begin{pmatrix}
    e^{2\pi\i\beta}& 0               \\
    0              &e^{-2\pi\i\beta}
  \end{pmatrix}.
\end{equation*}
Then up to gauge equivalence, we can assume that $A$ is of the form
\begin{equation*}
  \begin{pmatrix}
    \i\alpha& 0       \\
    0       & -\i\alpha
  \end{pmatrix}
  \,dx
  +
  \begin{pmatrix}
    \i\beta& 0      \\
    0      &-\i\beta
  \end{pmatrix}
  \,dy
\end{equation*}
near $T$, where $dx$ and $dy$ are the $1$-forms corresponding to $\mu$ and $\lambda$ respectively.
Then the Chern--Simons invariant $\cs_{M}(\rho;\alpha,\beta)$ of $\rho$ associated with $(\alpha,\beta)$ is defined by
\begin{equation*}
  \CS_{M}(\rho;\alpha,\beta)
  :=
  \frac{-1}{8}
  \int_{M}\Tr\left(dA\wedge A+\frac{2}{3}A\wedge A\wedge A\right)\in\C/(\pi^2\Z).
\end{equation*}
Note that $\Im\CS_{M}(\rho_0;0,0)$ coincides with the hyperbolic volume if the interior of $M$ possesses a complete hyperbolic structure, where $\rho_0$ is the Levi--Civita connection.
See \cite{Kirk/Klassen:COMMP1993} for details.
\par
In \cite{Murakami/Tran:2020}, we proved the following theorem.
\begin{thm}[{\cite{Murakami/Tran:2020}}]
The Chern--Simons invariant $\CS_{E}(\rho;\xi,\eta)$ of the representation $\rho$ associated with $(\xi,\eta)$ is given by
\begin{equation*}
  \CS_{E}(\rho;\xi,\eta)
  =
  S(\xi)-\frac{\xi\eta}{2}.
\end{equation*}
Here we choose $\mu$ and $\lambda$ to be the meridian and the preferred longitude of $E$ respectively, and we assume that $\rho$ sends
\begin{equation*}
  \mu
  \mapsto
  \begin{pmatrix}
    e^{\xi/2}&\ast \\
    0          &e^{-\xi/2}
  \end{pmatrix},
  \quad
  \lambda
  \mapsto
  \begin{pmatrix}
    e^{\eta/2}&\ast \\
    0         &e^{-\eta/2}
  \end{pmatrix}.
\end{equation*}
up to conjugation.
\end{thm}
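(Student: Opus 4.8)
The plan is to treat $\xi\mapsto\CS_E(\rho_\xi;\xi,\eta)$ as a function of the single real parameter $\xi\in(\kappa,\infty)$, to pin it down by an initial value together with a first-order variational formula, and to check that $S(\xi)-\xi\eta/2$ solves the same problem. First I would set up the family of representations: for $\xi>\kappa$ the discriminant $(2\cosh\xi-3)(2\cosh\xi+1)$ of the Riley polynomial \eqref{eq:Riley} is positive, so there is a real root $d=d(\xi)$ depending real-analytically on $\xi$, giving a real-analytic path $\rho_\xi$ of irreducible representations as in \eqref{eq:presentation_F_2}, with $\rho_\xi(\mu)$ having distinct eigenvalues $e^{\pm\xi/2}$ (so diagonalizable at the cusp) and $\rho_\xi(\lambda)$ having eigenvalue $e^{\eta(\xi)/2}$.

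For the initial value I would look at $\xi\to\kappa^{+}$: here $d(\xi)\to0$, so $\rho_\xi$ limits to the reducible, upper-triangular representation whose diagonal part is the abelian representation sending $\mu$ to $\operatorname{diag}(e^{\kappa/2},e^{-\kappa/2})$ and $\lambda$ to the identity (note $\eta(\kappa)=0$). Choosing the flat connection for this representation globally upper triangular and diagonal near the boundary torus, the Chern--Simons integrand $\Tr(dA\wedge A+\tfrac23 A\wedge A\wedge A)$ reduces to $2\,dA_{11}\wedge A_{11}$, which vanishes because the diagonal part of a flat upper-triangular connection is itself flat; hence $\CS_E(\rho_\kappa;\kappa,0)=0$. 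Since $\CS_E(\rho_\xi;\xi,\eta(\xi))$ varies continuously in $\xi$ (the defining integral is over a compact manifold with boundary, and $\alpha=\xi/(4\pi i)$, $\beta=\eta(\xi)/(4\pi i)$ vary continuously), and $S(\kappa)=0$, $\kappa\eta(\kappa)/2=0$, both sides of the asserted identity are $0$ in the limit $\xi\to\kappa^{+}$.

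For the variational step I would invoke the Kirk--Klassen formula \cite{Kirk/Klassen:COMMP1993} for the change of the $\SL(2;\C)$ Chern--Simons invariant along a path of flat connections on a manifold with toral boundary. Its content, in the normalization fixed by the definition used here (the factor $-\tfrac18$, values in $\C/\pi^{2}\Z$, and $e^{2\pi i\alpha}=e^{\xi/2}$, $e^{2\pi i\beta}=e^{\eta/2}$), is that up to the bookkeeping total derivative $\tfrac12\,d(\xi\eta)$ the variation of $\CS_E$ is the standard symplectic term built from the meridian and longitude holonomies; concretely $\frac{d}{d\xi}\bigl(\CS_E(\rho_\xi;\xi,\eta)+\tfrac12\xi\eta\bigr)=-\eta(\xi)$. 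On the right-hand side, $\frac{d}{d\xi}\bigl(S(\xi)-\xi\eta/2+\tfrac12\xi\eta\bigr)=S'(\xi)$, and by the computation already carried out in the excerpt $S'(\xi)=\log\frac{e^{\varphi(\xi)}-e^{-\xi}}{1-e^{-\xi+\varphi(\xi)}}=\log\frac{e^{\xi+\varphi(\xi)}-1}{e^{\xi}-e^{\varphi(\xi)}}$. So it remains to verify the algebraic identity $S'(\xi)=-\eta(\xi)$, i.e. $e^{\eta(\xi)}=\frac{e^{\xi}-e^{\varphi(\xi)}}{e^{\xi+\varphi(\xi)}-1}$; this follows from $e^{\varphi(\xi)}+e^{-\varphi(\xi)}=2\cosh\xi-1$ together with the elementary identity $(c^{2}-c-4)^{2}-(2\sinh\xi)^{2}(c-3)(c+1)=4$ with $c=2\cosh\xi$, which rewrites $e^{\eta(\xi)}=\tfrac12\bigl(c^{2}-c-4-2\sinh\xi\sqrt{(c-3)(c+1)}\,\bigr)$ as $2/\bigl(c^{2}-c-4+2\sinh\xi\sqrt{(c-3)(c+1)}\,\bigr)$, matching the reciprocal of $e^{S'(\xi)}$.

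Having matched both the initial value at $\xi=\kappa$ and the $\xi$-derivative on the connected interval $(\kappa,\infty)$, I conclude $\CS_E(\rho_\xi;\xi,\eta)+\tfrac12\xi\eta=S(\xi)$ in $\C/\pi^{2}\Z$, which is the claimed formula after the conjugation normalization of $\mu$ and $\lambda$. I expect the genuine obstacle to be the variational step: extracting the precise Kirk--Klassen formula in the above normalization, pinning down the additive correction as exactly $+\tfrac12\xi\eta$ (this is where the choice of the preferred longitude and of the lifts $\alpha,\beta$ enters), and justifying the formula along a path whose $\xi\to\kappa^{+}$ limit lies on the reducible locus. An alternative that avoids Kirk--Klassen is to compute $\CS_E(\rho_\xi)$ directly from the two-tetrahedron ideal triangulation of $S^{3}\setminus E$ --- whose shape parameters at $\rho_\xi$ are the quantities $e^{-\xi\pm\varphi(\xi)}$ occurring in $S(\xi)$ --- via Neumann's extended-Bloch-group formula for the complex Chern--Simons class with the correction term for an incomplete cusp, but this requires an equally careful accounting of flattenings and boundary terms.
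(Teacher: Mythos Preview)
This theorem is not proved in the present paper: it is quoted from the companion article \cite{Murakami/Tran:2020}, and no argument is given here beyond the citation. There is therefore nothing in this paper to compare your proposal against.

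Your outline is nonetheless the standard strategy and is very likely close in spirit to what the cited reference does. The algebraic identity $S'(\xi)=-\eta(\xi)$ that you isolate is correct: with $c:=2\cosh\xi$ and $e^{\varphi}+e^{-\varphi}=c-1$, the two quantities
\[
\frac{e^{\xi+\varphi}-1}{e^{\xi}-e^{\varphi}}
\qquad\text{and}\qquad
\frac{e^{\xi-\varphi}-1}{e^{\xi}-e^{-\varphi}}
\]
have product $1$ (since $(e^{\xi+\varphi}-1)(e^{\xi-\varphi}-1)=(e^{\xi}-e^{\varphi})(e^{\xi}-e^{-\varphi})$, both sides equalling $e^{2\xi}-(c-1)e^{\xi}+1$) and sum $c^{2}-c-4$; hence $\{e^{S'(\xi)},e^{-S'(\xi)}\}=\{e^{\eta},e^{-\eta}\}$, and since $S'(\xi)>0$ while $e^{\eta}<1$ for $\xi>\kappa$ one gets $e^{S'(\xi)}=e^{-\eta(\xi)}$. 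So that step closes.

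What remains is exactly the gap you already name: deriving the variational identity in the paper's specific normalization (the prefactor $-\tfrac18$, values in $\C/\pi^{2}\Z$, and $e^{2\pi\i\alpha}=e^{\xi/2}$, $e^{2\pi\i\beta}=e^{\eta/2}$) so that it reads precisely $\frac{d}{d\xi}\bigl(\CS_E+\tfrac12\xi\eta\bigr)=-\eta$, and justifying the limit $\xi\to\kappa^{+}$ where the path hits the reducible locus. These are normalization and continuity issues rather than conceptual obstacles, but until they are pinned down the argument is a sketch. Your alternative via the two-tetrahedron triangulation and Neumann's extended Bloch group formula is indeed the other standard route and would give an independent check.
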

\section{Proofs of lemmas}\label{sec:proofs_lemmas}
In this section, we give proofs of lemmas used in this paper.
\begin{proof}[Proof of Lemma~\ref{lem:converge_T}]
Putting $x:=y\,e^{\theta\i}$, the integral becomes
\begin{equation*}
  \int_{C_{0}}
  \frac{\exp\left((2z-1)y\,e^{\theta\i}\right)}
       {y\sinh(y\,e^{\theta\i})\sinh(\gamma y\,e^{\theta\i})}
  \,dy.
\end{equation*}
Noting that
\begin{equation*}
  \sinh(as)
  \underset{s\to\infty}{\sim}
  \frac{1}{2}e^{as}
\end{equation*}
and that
\begin{equation*}
  \sinh(as)
  \underset{s\to-\infty}{\sim}
  -\frac{1}{2}e^{-as}
\end{equation*}
for a complex number $a$ with $\Re(a)>0$, we have
\begin{equation*}
  \frac{\exp\left((2z-1)y\,e^{\theta\i}\right)}
       {y\sinh(y\,e^{\theta\i})\sinh(\gamma y\,e^{\theta\i})}
  \underset{y\to\infty}{\sim}
  \frac{1}{y}
  \exp
  \left(
    (2z-2-\gamma)e^{\theta\i}y
  \right)
\end{equation*}
and
\begin{equation*}
  \frac{\exp\left((2z-1)y\,e^{\theta\i}\right)}
       {y\sinh(y\,e^{\theta\i})\sinh(\gamma y\,e^{\theta\i})}
  \underset{y\to-\infty}{\sim}
  \frac{1}{y}
  \exp
  \left(
    (2z+\gamma)e^{\theta\i}y
  \right)
\end{equation*}
since $\Re\left(e^{\theta\i}\right)=\cos\theta>0$ and $\Re\left(\gamma e^{\theta\i}\right)=\frac{\xi}{2N\pi}\sin\theta>0$.
So we have
\begin{equation*}
  \left|
  \frac{\exp\left((2z-1)y\,e^{\theta\i}\right)}
       {y\sinh(y\,e^{\theta\i})\sinh(\gamma y\,e^{\theta\i})}
  \right|
  <
  \frac{C}{y}
  \exp
  \left(
    \Re\left((2z-2-\gamma)e^{\theta\i}\right)y
  \right)
\end{equation*}
when $y>0$ for a positive constant $C$, and
\begin{equation*}
  \left|
  \frac{\exp\left((2z-1)y\,e^{\theta\i}\right)}
       {y\sinh(y\,e^{\theta\i})\sinh(\gamma y\,e^{\theta\i})}
  \right|
  <
  \frac{C'}{|y|}
  \exp
  \left(
    -\Re\left((2z+\gamma)e^{\theta\i}\right)y
  \right)
\end{equation*}
when $y<0$ for a positive constant $C'$.
Therefore, if $-\frac{\xi\sin\theta}{4N\pi}<\Re(ze^{\theta\i})<\cos\theta+\frac{\xi\sin\theta}{4N\pi}$, then the integrals
\begin{equation*}
  \int_{1}^{\infty}
  \frac{\exp\left((2z-1)y\,e^{\theta\i}\right)}
       {y\sinh(y\,e^{\theta\i})\sinh(\gamma y\,e^{\theta\i})}
  dy
\end{equation*}
and
\begin{equation*}
  \int_{-\infty}^{-1}
  \frac{\exp\left((2z-1)y\,e^{\theta\i}\right)}
       {y\sinh(y\,e^{\theta\i})\sinh(\gamma y\,e^{\theta\i})}
  dy
\end{equation*}
converge and the lemma follows.
\end{proof}
\begin{proof}[Proof of Lemma~\ref{lem:integrals}]
Putting $x=ye^{\i\theta}$, we have
\begin{equation*}
  \int_{C_{\theta}}
  \frac{e^{(2z-1)x}}{x^m\sin{x}}\,dx
  =
  \frac{1}{e^{(m-1)\i\theta}}
  \int_{C_{0}}
  \frac{e^{(2z-1)ye^{\i\theta}}}{y^{m}\sinh\left(ye^{\i\theta}\right)}\,dy.
\end{equation*}
So we need to show that
\begin{equation*}
  \int_{1}^{r}
  \frac{e^{(2z-1)ye^{\i\theta}}}{y^{m}\sinh\left(ye^{\i\theta}\right)}\,dy
\end{equation*}
and
\begin{equation*}
  \int_{-r}^{-1}
  \frac{e^{(2z-1)ye^{\i\theta}}}{y^{m}\sinh\left(ye^{\i\theta}\right)}\,dy
\end{equation*}
converge when $r\to\infty$ for $m=0,1,2$.
\par
We have
\begin{equation*}
\begin{split}
  \left|
    \int_{1}^{r}
    \frac{e^{(2z-1)ye^{\i\theta}}}{y^{m}\sinh\left(ye^{\i\theta}\right)}\,dy
  \right|
  &\le
  \int_{1}^{r}
  \frac{2e^{2y\Re\left(ze^{\i\theta}\right)-y\cos\theta}}
       {y^{m}\left(e^{y\cos\theta}-e^{-y\cos\theta}\right)}\,dy
  \\
  &=
  \int_{1}^{r}
  \frac{2e^{2y\left(\Re(ze^{\i\theta})-\cos\theta\right)}}
       {y^{m}(1-e^{-2y\cos\theta})}\,dy,
\end{split}
\end{equation*}
which converges when $r\to\infty$ since $\Re(ze^{\i\theta})<\cos\theta$.
\par
We also have
\begin{equation*}
\begin{split}
  \left|
    \int_{-r}^{-1}
    \frac{e^{(2z-1)ye^{\i\theta}}}{y^{m}\sinh(ye^{\i\theta})}\,dy
  \right|
  &\le
  \int_{-r}^{-1}
  \frac{2e^{2y\Re\left(ze^{\i\theta}\right)-y\cos\theta}}
       {\left|y^{m}\left(e^{y\cos\theta}-e^{-y\cos\theta}\right)\right|}\,dy
  \\
  &=
  \int_{-r}^{-1}
  \frac{2e^{2y\Re(ze^{\i\theta})}}{\left|y^{m}(e^{2y\cos\theta}-1)\right|}\,dy,
\end{split}
\end{equation*}
which converges when $r\to\infty$ since $\Re(ze^{\i\theta})>0$.
\end{proof}
Now we calculate the integrals in Lemma~\ref{lem:integrals} to prove Lemma~\ref{lem:L0_1_2}.
\par
The following lemma shows \eqref{eq:L0}.
\begin{lem}\label{lem:L0}
If $0<\Re(ze^{\theta\i})<\cos\theta$, then we have
\begin{equation*}
  \int_{C_{\theta}}\frac{e^{(2z-1)x}}{\sinh(x)}\,dx
  =
  \frac{-2\pi\i}{1-e^{-2\pi\i z}}.
\end{equation*}
\end{lem}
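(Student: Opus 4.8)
The plan is to combine the quasi-periodicity of the integrand under the translation $x\mapsto x+\pi\i$ with the residue theorem. Set $I:=\int_{C_\theta}\frac{e^{(2z-1)x}}{\sinh(x)}\,dx$, which converges by Lemma~\ref{lem:integrals} (the case $m=0$). First I would record the functional equation coming from $\sinh(x+\pi\i)=-\sinh(x)$ and $e^{(2z-1)(x+\pi\i)}=e^{(2z-1)\pi\i}e^{(2z-1)x}=-e^{2\pi\i z}e^{(2z-1)x}$: the substitution $x=w+\pi\i$ yields
\[
  \int_{C_\theta+\pi\i}\frac{e^{(2z-1)x}}{\sinh(x)}\,dx
  =
  e^{2\pi\i z}\int_{C_\theta}\frac{e^{(2z-1)w}}{\sinh(w)}\,dw
  =
  e^{2\pi\i z}\,I.
\]

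Next I would deform $C_\theta$ to $C_\theta+\pi\i$. The key geometric observation is that $C_\theta$ is the graph of a continuous function $h\colon\R\to\R$ over the real axis: along the left ray $\Re x$ increases over $(-\infty,-\cos\theta]$, along the rotated upper semicircle it increases over $[-\cos\theta,\cos\theta]$ with $h(0)=1$ (the semicircle passes through $\i$), and along the right ray it increases over $[\cos\theta,\infty)$. Hence $C_\theta$ and $C_\theta+\pi\i$ are disjoint and bound the region $R=\{u+v\i\mid h(u)<v<h(u)+\pi\}$, which meets the imaginary axis exactly in the segment from $\i$ to $(1+\pi)\i$; since $1<\pi<1+\pi$, the only pole of $\frac{e^{(2z-1)x}}{\sinh(x)}$ inside $R$ is $x=\pi\i$. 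Truncating both contours at the points $\pm Te^{\theta\i}$ and joining the truncations by the vertical segments $\pm Te^{\theta\i}+t\i$ $(0\le t\le\pi)$, the residue theorem expresses $I-e^{2\pi\i z}I$ as $2\pi\i$ times the residue at $\pi\i$ plus the contributions of the two segments; on those segments (of length $\pi$) the integrand is bounded by a constant multiple of $e^{2T(\Re(ze^{\theta\i})-\cos\theta)}$ and of $e^{-2T\Re(ze^{\theta\i})}$ respectively, and these tend to $0$ as $T\to\infty$ precisely because $0<\Re(ze^{\theta\i})<\cos\theta$ --- this is exactly the estimate already carried out in the proof of Lemma~\ref{lem:integrals}. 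Since $\cosh(\pi\i)=-1$ and $e^{(2z-1)\pi\i}=-e^{2\pi\i z}$, the residue at $\pi\i$ equals $e^{2\pi\i z}$, so in the limit
\[
  I\bigl(1-e^{2\pi\i z}\bigr)=2\pi\i\,e^{2\pi\i z},
  \qquad\text{hence}\qquad
  I=\frac{2\pi\i\,e^{2\pi\i z}}{1-e^{2\pi\i z}}=\frac{-2\pi\i}{1-e^{-2\pi\i z}},
\]
which is the asserted identity.

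I expect the main obstacle to be the bookkeeping in the deformation step: verifying that $C_\theta$ and its $\pi\i$-translate bound a region containing exactly the pole $\pi\i$ (neither $0$ nor $2\pi\i$), and that the two connecting segments are negligible. Both points are settled by the graph description of $C_\theta$ together with the exponential decay already established in the proof of Lemma~\ref{lem:integrals}, so no essentially new estimate is needed; the remainder is the elementary residue and algebra computation above.
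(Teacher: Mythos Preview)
Your proposal is correct and follows essentially the same route as the paper: shift $C_{\theta}$ by $\pi\i$, use the quasi-periodicity $\sinh(x+\pi\i)=-\sinh(x)$ together with the residue theorem to pick up the single pole at $\pi\i$, and show that the two connecting vertical segments tend to~$0$ using the hypothesis $0<\Re(ze^{\theta\i})<\cos\theta$. One small caveat: $C_{\theta}$ is not literally the graph of a function over the real axis when $\theta>0$ (along the rotated semicircle the real part $\cos(\tau+\theta)$ dips to $-1$ before returning to $-\cos\theta$, so it is not monotone near the left endpoint), but this does not affect your argument, since $C_{\theta}$ meets the imaginary axis only at $\i$ and hence $\pi\i$ is indeed the unique pole enclosed between $C_{\theta}$ and $C_{\theta}+\pi\i$.
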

\begin{proof}
Put $C_{0}^{r}:=C_{0}\setminus\Bigl((-\infty,-r)\cup(r,\infty)\Bigr)$ for $r>1$, $C_{\theta}^{r}:=e^{\i\theta}C_{0}^{r}$, and $C_{\theta}^{r}+\pi\i:=\{w+\pi\i\mid w\in C_{\theta}^{r}\}$.
We first note that
\begin{equation*}
\begin{split}
  \int_{C_{\theta}^{r}}\frac{e^{(2z-1)x}}{\sinh(x)}\,dx
  &=
  \int_{C_{\theta}^{r}+\pi\i}\frac{e^{(2z-1)(x-\pi\i)}}{\sinh(x-\pi\i)}\,d(x-\pi\i)
  \\
  &=
  e^{-2\pi\i z}
  \int_{C_{\theta}^{r}+\pi\i}\frac{e^{(2z-1)x}}{\sinh(x)}\,dx.
\end{split}
\end{equation*}
Hence we have
\begin{equation*}
\begin{split}
  &
  \left(1-e^{-2\pi\i z}\right)
  \int_{C_{\theta}^{r}}\frac{e^{(2z-1)x}}{\sinh(x)}\,dx
  \\
  =&
  e^{-2\pi\i z}
  \left(
    -\int_{C_{\theta}^{r}}\frac{e^{(2z-1)x}}{\sinh(x)}\,dx
    +
    \int_{C_{\theta}^{r}+\pi\i}\frac{e^{(2z-1)x}}{\sinh(x)}\,dx
  \right)
  \\
  =&
  e^{-2\pi\i z}
  \left(
    \int_{V_{r}^{+}}\frac{e^{(2z-1)x}}{\sinh(x)}\,dx
    -
    \int_{V_{r}^{-}}\frac{e^{(2z-1)x}}{\sinh(x)}\,dx
  \right)
  \\
  &-
  e^{-2\pi\i z}2\pi\i\Res\left(\frac{e^{(2z-1)x}}{\sinh(x)};x=\pi\i\right),
\end{split}
\end{equation*}
where $V_{r}^{\pm}$ is the vertical segment connecting $\pm re^{\i\theta}$ and $\pm re^{\i\theta}+\pi\i$, oriented upward.
Since $\Res\left(\frac{e^{(2z-1)x}}{\sinh(x)};x=\pi\i\right)=\lim_{x\to\pi\i}\frac{e^{(2z-1)x}(x-\pi\i)}{\sinh(x)}=-e^{(2z-1)\pi\i}$, we have
\begin{equation*}
\begin{split}
  &
  \left(1-e^{-2\pi\i z}\right)
  \int_{C_{\theta}^{r}}\frac{e^{(2z-1)x}}{\sinh(x)}\,dx
  \\
  =&
  e^{-2\pi\i z}
  \left(
    \int_{V_{r}^{+}}\frac{e^{(2z-1)x}}{\sinh(x)}\,dx
    -
    \int_{V_{r}^{-}}\frac{e^{(2z-1)x}}{\sinh(x)}\,dx
  \right)
  -
  2\pi\i.
\end{split}
\end{equation*}
\par
We will show that $\displaystyle\lim_{r\to\infty}\int_{V_{r}^{\pm}}\frac{e^{(2z-1)x}}{\sinh(x)}\,dx=0$.
\par
Since
\begin{equation*}
  \int_{V_{r}^{\pm}}\frac{e^{(2z-1)x}}{\sinh(x)}\,dx
  =
  \pi\i
  \int_{0}^{1}\frac{e^{(2z-1)(\pm re^{\i\theta}+\pi\i s)}}{\sinh(\pm re^{\i\theta}+\pi\i s)}\,ds
\end{equation*}
and $|\sinh(w)|=\frac{1}{2}\left|e^{w}-e^{-w}\right|\ge\frac{1}{2}\left|e^{\Re{w}}-e^{-\Re{w}}\right|$ for any $w\in\C$, we have
\begin{equation*}
\begin{split}
  \left|
    \int_{V_{r}^{\pm}}\frac{e^{(2z-1)x}}{\sinh(x)}\,dx
  \right|
  &\le
  \pi
  \int_{0}^{1}
  \left|
    \frac{e^{(2z-1)(\pm re^{\i\theta}+\pi\i s)}}{\sinh(\pm re^{\i\theta}+\pi\i s)}
  \right|
  \,ds
  \\
  &\le
  \pi
  \int_{0}^{1}
  \frac{2e^{\Re((2z-1)(\pm re^{\i\theta}+\pi\i s))}}
       {\left|e^{\Re(\pm re^{\i\theta}+\pi\i s)}-e^{-\Re(\pm re^{\i\theta}+\pi\i s)}\right|}
  \,ds
  \\
  &=
  \frac{2\pi e^{\pm r\Re((2z-1)e^{\i\theta})}}{e^{r\cos\theta}-e^{-r\cos\theta}}
  \int_{0}^{1}e^{-\pi s\Im(2z-1)}\,ds.
\end{split}
\end{equation*}
From the assumption $0<\Re(ze^{\theta\i})<\cos\theta$, we have $\left|\Re\left((2z-1)e^{\i\theta}\right)\right|<\cos\theta$.
Therefore we see that
\begin{equation*}
  \left|
    \int_{V_{r}^{\pm}}\frac{e^{(2z-1)x}}{\sinh(x)}\,dx
  \right|
  \underset{r\to\infty}{\to}0
\end{equation*}
and so we have
\begin{equation*}
  \int_{C_{\theta}}\frac{e^{(2z-1)x}}{\sinh(x)}\,dx
  =
  \lim_{r\to\infty}\int_{C_{\theta}^{r}}\frac{e^{(2z-1)x}}{\sinh(x)}\,dx
  =
  \frac{-2\pi\i}{1-e^{-2\pi\i z}}.
\end{equation*}
\end{proof}
\par
The following lemma shows \eqref{eq:L1} and \eqref{eq:L2}.
\begin{lem}\label{lem:L1_2}
If $0<\Re(ze^{\theta\i})<\cos\theta$, then we have
\begin{equation*}
  \int_{C_{\theta}}\frac{e^{(2z-1)x}}{x\sinh(x)}\,dx
  =
  \begin{cases}
    -2\log\left(1-e^{2\pi\i z}\right)&\text{if $\Im{z}\ge0$,}
    \\[3mm]
    -2\pi\i(2z-1)-2\log\left(1-e^{-2\pi\i z}\right)&\text{if $\Im{z}<0$.}
  \end{cases}
\end{equation*}
and
\begin{multline*}
  \int_{C_{\theta}}\frac{e^{(2z-1)x}}{x^2\sinh(x)}\,dx
  \\
  =
  \begin{cases}
    -\dfrac{2\i}{\pi}\Li_2\left(e^{2\pi\i z}\right)&\text{if $\Im{z}\ge0$,}
    \\[3mm]
    -2\pi\i\left(2z^2-2z+\frac{1}{3}\right)+\dfrac{2\i}{\pi}\Li_2\left(e^{-2\pi\i z}\right)
    &\text{if $\Im{z}<0$}.
  \end{cases}
\end{multline*}
\end{lem}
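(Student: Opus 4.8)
I would prove Lemma~\ref{lem:L1_2} by iterating the contour--shifting argument already used in the proof of Lemma~\ref{lem:L0}. Put
\begin{equation*}
  I_m(z):=\int_{C_\theta}\frac{e^{(2z-1)x}}{x^m\sinh(x)}\,dx,\qquad m=1,2,
\end{equation*}
so that the claim is the evaluation of $I_1$ and $I_2$. Both integrands have poles only at $x=k\pi\i$ ($k\in\Z$), and, because the semicircular part of $C_\theta$ passes above the origin, the pole $x=0$ lies \emph{below} $C_\theta$. Hence translating $C_\theta$ vertically by $\pm\pi\i$ forces the dichotomy in the statement: shifting $C_\theta$ upward, the strip between $C_\theta+(k-1)\pi\i$ and $C_\theta+k\pi\i$ contains exactly the pole $x=k\pi\i$ ($k\ge1$) and $x=0$ is never crossed; shifting downward, the strip between $C_\theta-(k+1)\pi\i$ and $C_\theta-k\pi\i$ contains exactly $x=-k\pi\i$ ($k\ge0$), so $x=0$ is crossed on the first step.

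For $\Im z\ge 0$ I would push $C_\theta$ up to $C_\theta+K\pi\i$ and let $K\to\infty$. Each elementary step contributes $2\pi\i\,\Res_{x=k\pi\i}\frac{e^{(2z-1)x}}{x^m\sinh(x)}$; the two connecting vertical segments are negligible and the tail $\int_{C_\theta+K\pi\i}$ tends to $0$ by the estimates of Lemmas~\ref{lem:converge_T} and~\ref{lem:integrals} (translation by $K\pi\i$ supplies a factor of modulus $e^{-2\pi K\Im z}\le1$, while the rays of $C_\theta$ are controlled through $\Re\bigl((2z-1)e^{\i\theta}\bigr)=2\Re(ze^{\i\theta})-\cos\theta\in(-\cos\theta,\cos\theta)$). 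Since $\cosh(k\pi\i)=(-1)^k$ and $e^{(2z-1)k\pi\i}=(-1)^ke^{2\pi\i kz}$, the residue at $x=k\pi\i$ is $\frac{e^{2\pi\i kz}}{k\pi\i}$ when $m=1$, so $I_1(z)=\sum_{k\ge1}\frac{2e^{2\pi\i kz}}{k}=-2\log(1-e^{2\pi\i z})$; the same computation for $m=2$ gives $I_2(z)=-\frac{2\i}{\pi}\sum_{k\ge1}\frac{e^{2\pi\i kz}}{k^2}=-\frac{2\i}{\pi}\Li_2(e^{2\pi\i z})$. For $\Im z=0$ one has $\Re z\in(0,1)$, hence $z\notin\Z$, and the formulas persist by continuity of $I_m$ and of the right-hand sides up to the real axis.

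For $\Im z<0$ I would push $C_\theta$ \emph{down} to $C_\theta-K\pi\i$, where the tail vanishes because $\Im z<0$. The new contribution is the residue at the pole $x=0$: from
\begin{equation*}
  \frac{e^{(2z-1)x}}{x\sinh(x)}=\frac1{x^2}+\frac{2z-1}{x}+\cdots,
  \qquad
  \frac{e^{(2z-1)x}}{x^2\sinh(x)}=\frac1{x^3}+\frac{2z-1}{x^2}+\frac{2z^2-2z+\tfrac13}{x}+\cdots,
\end{equation*}
one reads off that $x=0$ contributes $-2\pi\i(2z-1)$ to $I_1$ and $-2\pi\i\bigl(2z^2-2z+\tfrac13\bigr)$ to $I_2$, while the residues at $x=-k\pi\i$ ($k\ge1$) reassemble, via $e^{-(2z-1)k\pi\i}=(-1)^ke^{-2\pi\i kz}$, into $-2\log(1-e^{-2\pi\i z})$ and $\frac{2\i}{\pi}\Li_2(e^{-2\pi\i z})$ respectively. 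This is exactly the asserted formula.

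The one genuinely delicate point --- the main obstacle --- is the bookkeeping: fixing orientations so that an upward step yields $+2\pi\i\,\Res$ and a downward step $-2\pi\i\,\Res$, verifying that exactly one pole sits in each intermediate strip (in particular that $x=0$ is crossed once, on the first downward step), and making the tail and vertical-segment estimates uniform in $z$ on compact subsets of $\{0<\Re(ze^{\i\theta})<\cos\theta\}$; all of this runs parallel to the proof of Lemma~\ref{lem:L0}. As a consistency check, $I_2$ can alternatively be obtained from $I_1$ by integrating the relation $\frac{d\,\L_2}{d\,z}=-2\pi\i\,\L_1$ of Lemma~\ref{lem:der_L} (differentiation under the integral being legitimate by the same bounds) and pinning the integration constant at $z=\tfrac12$, where closing $C_\theta$ in the upper half-plane gives $\int_{C_\theta}\frac{dx}{x^2\sinh(x)}=2\pi\i\sum_{k\ge1}\frac{(-1)^{k+1}}{k^2\pi^2}=\frac{\pi\i}{6}=-\frac{2\i}{\pi}\Li_2(-1)$.
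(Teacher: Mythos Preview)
Your argument is correct and follows essentially the same route as the paper: close the contour in the upper (respectively lower) half-plane, show the boundary contributions vanish, and sum the resulting residue series at $x=k\pi\i$ to recover $\log$ and $\Li_2$, with the extra pole at $x=0$ producing the polynomial terms when $\Im z<0$. The only cosmetic differences are that the paper caps $C_\theta^r$ with a single segment $\overline{H}_r$ at height $r$ (and handles $\Im z=0$ directly in that estimate) rather than iterating a shift by $\pi\i$ and invoking continuity at the real axis, and the paper omits your consistency check via $\frac{d\,\L_2}{d\,z}=-2\pi\i\,\L_1$.
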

\begin{proof}
First we assume that $\Im{z}\ge0$.
\par
For a real number $r>1$, let $\overline{U}_{r}^{\pm}$ be the vertical segment connecting $\pm re^{\i\theta}$ and $\pm re^{\i\theta}+r\i$, and $\overline{H}_{r}$ be the segment connecting $-re^{\i\theta}+r\i$ and $re^{\i\theta}+r\i$.
Here we assume that $r$ is not an integer multiple of $\pi$ so that $\overline{H}_r$ avoids the poles of $\frac{e^{(2z-1)x}}{x^m\sinh(x)}$ ($m=1,2$) as a function of $x$.
We orient $\overline{U}_{r}^{\pm}$ upward and $\overline{H}_{r}$ from left to right.
Note that the distance between the origin and the line containing $\overline{U}_{r}^{\pm}$ is $r\cos\theta$, and that the distance between the origin and the line containing $\overline{H}_{r}$ is also $r\cos\theta$.
\par
By the residue theorem we have
\begin{equation}\label{eq:L1_residue_positive}
\begin{split}
  &\int_{C_{\theta}^{r}}\frac{e^{(2z-1)x}}{x^m\sinh(x)}\,dx
  -
  \int_{\overline{H}_{r}}\frac{e^{(2z-1)x}}{x^m\sinh(x)}\,dx
  \\
  &+
  \int_{\overline{U}^{+}_{r}}\frac{e^{(2z-1)x}}{x^m\sinh(x)}\,dx
  -
  \int_{\overline{U}^{-}_{r}}\frac{e^{(2z-1)x}}{x^m\sinh(x)}\,dx
  \\
  =&
  2\pi\i
  \sum_{k=1}^{\lfloor{r}\rfloor}
  \Res\left(\frac{e^{(2z-1)x}}{x^m\sinh(x)};x=k\pi\i\right)
\end{split}
\end{equation}
for $m=1,2$, where $\lfloor{r}\rfloor$ is the greatest integer less than or equal to $r$.
Since the order of the pole $x=k\pi\i$ of $\frac{e^{(2t-1)x}}{x^m\sinh(x)}$ for $k=1,2,3,\dots$ is one, we have
\begin{equation*}
  \Res\left(\frac{e^{(2z-1)x}}{x\sinh(x)};x=k\pi\i\right)
  =
  \lim_{t\to k\pi\i}\frac{(x-k\pi\i)e^{(2z-1)x}}{x\sinh(x)}
  =
  \frac{e^{2kz\pi\i}}{k\pi\i}
\end{equation*}
and
\begin{equation*}
  \Res\left(\frac{e^{(2z-1)x}}{x^2\sinh(x)};x=k\pi\i\right)
  =
  \lim_{x\to k\pi\i}\frac{(x-k\i)e^{(2z-1)x}}{x^2\sinh(x)}
  =
  \frac{-e^{2kz\pi\i}}{k^2\pi^2}.
\end{equation*}
\par
Next, we calculate integrals along $\overline{H}_r$ and $\overline{U}_r^{\pm}$.
Note that since the distance between the origin and any point on these segments is greater than or equal to $r\cos\theta$, we have
\begin{align*}
  \left|
    \int_{\overline{H}_r}\frac{e^{(2z-1)x}}{x^m\sinh(x)}\,dx
  \right|
  &\le
  \frac{1}{(r\cos\theta)^m}\left|\int_{\overline{H}_r}\frac{e^{(2z-1)x}}{\sinh(x)}\,dx\right|
  \\
  \intertext{and}
  \left|
    \int_{\overline{U}_r^{\pm}}\frac{e^{(2z-1)x}}{x^m\sinh(x)}\,dx
  \right|
  &\le
  \frac{1}{(r\cos\theta)^m}\left|\int_{\overline{U}_r^{\pm}}\frac{e^{(2z-1)x}}{\sinh(x)}\,dx\right|
\end{align*}
for $m=1,2$.
\par
Putting $x=ye^{\i\theta}+r\i$, we have
\begin{equation*}
\begin{split}
  &\left|\int_{\overline{H}_{r}}\frac{e^{(2z-1)x}}{x^m\sinh(x)}\,dx\right|
  \\
  \le&
  \frac{1}{(r\cos\theta)^m}
  \left|
    \int_{-r}^{r}
    \frac{e^{(2z-1)(ye^{\i\theta}+r\i)}\times e^{\i\theta}}
         {\sinh(ye^{\i\theta}+r\i)}\,dy
  \right|
  \\
  \le&
  \frac{1}{(r\cos\theta)^m}
  \int_{-r}^{r}
  \frac{\left|e^{(2z-1)(ye^{\i\theta}+r\i)}\right|}{\left|\sinh(ye^{\i\theta}+r\i)\right|}\,dy
  \\
  =&
  \frac{e^{-2r\Im{z}}}{(r\cos\theta)^m}
  \int_{-r}^{r}
  \frac{e^{y\Re\left((2z-1)e^{\i\theta}\right)}}
       {\left|\sinh(ye^{\i\theta}+r\i)\right|}\,dy
  \\
  &\text{\small
        ($M
        :=
        \max_{-1\le y\le1}
        \frac{e^{y\Re\left((2z-1)e^{\i\theta}\right)}}{\left|\sinh(ye^{\i\theta}+r\i)\right|}>0$)}
  \\
  \le&
  \frac{e^{-2r\Im{z}}}{(r\cos\theta)^m}
  \\
  &\quad\times
  \left(
    2M
    +
    \int_{-r}^{-1}
    \frac{2e^{2y\Re\left(ze^{\i\theta}\right)-y\cos\theta}}{e^{-y\cos\theta}-e^{y\cos\theta}}\,dy
    +
    \int_{1}^{r}
    \frac{2e^{2y\Re\left(ze^{\i\theta}\right)-y\cos\theta}}{e^{y\cos\theta}-e^{-y\cos\theta}}\,dy
  \right)
  \\
  =&
  \frac{2e^{-2r\Im{z}}}{(r\cos\theta)^m}
  \left(
    M
    +
    \int_{-r}^{-1}
    \frac{e^{2y\Re\left(ze^{\i\theta}\right)}}{1-e^{2y\cos\theta}}\,dy
    +
    \int_{1}^{r}
    \frac{e^{2y\Re\left(ze^{\i\theta}\right)-2y\cos\theta}}{1-e^{-2y\cos\theta}}\,dy
  \right)
  \\
  \le&
  \frac{2e^{-2r\Im{z}}}{(r\cos\theta)^m}
  \left(
    M
    +
    \frac{\int_{-r}^{-1}e^{2y\Re\left(ze^{\i\theta}\right)}\,dy}
         {1-e^{-2\cos\theta}}
    +
    \frac{\int_{1}^{r}e^{2y\Re\left(ze^{\i\theta}\right)-2y\cos\theta}\,dy}
         {1-e^{-2\cos\theta}}
  \right)
  \\
  =&
  \frac{2e^{-2r\Im{z}}}{(r\cos\theta)^m}
  \Biggl(
    M
    +
    \frac{e^{-2\Re\left(ze^{\i\theta}\right)}-e^{-2r\Re\left(ze^{\i\theta}\right)}}
         {2\left(1-e^{-2\cos\theta}\right)\Re\left(ze^{\i\theta}\right)}
  \\
  &\phantom{\frac{4e^{-r\Im{z}}}{(r\cos\theta)^m}\Biggl(}
    \quad+
    \frac{e^{r\left(2\Re\left(ze^{\i\theta}\right)-2\cos\theta\right)}
         -e^{2\Re\left(ze^{\i\theta}\right)-2\cos\theta}}
         {2\left(1-e^{-2\cos\theta}\right)\left(\Re\left(ze^{\i\theta}\right)-\cos\theta\right)}
  \Biggr).
\end{split}
\end{equation*}
This converges to zero as $r\to\infty$ since $0<\Re(ze^{\i\theta})<\cos\theta$ and $\Im{z}\ge0$.
Note that $M$ depends on $r$ but that it is bounded because it is periodic with respect to $r$.
\par
Putting $x=r(e^{\i\theta}+y\i)$, we have
\begin{equation*}
\begin{split}
  \left|
    \int_{\overline{U}_r^{+}}\frac{e^{(2z-1)x}}{x^m\sinh(x)}\,dx
  \right|
  &\le
  \frac{1}{(r\cos\theta)^m}
  \left|
    \int_{0}^{1}\frac{e^{(2z-1)r(e^{\i\theta}+y\i)}\times r\i}{\sinh(r(e^{\i\theta}+y\i))}\,dy
  \right|
  \\
  &\le
  \frac{1}{(r\cos\theta)^m}
  \int_{0}^{1}
  \frac{r\left|e^{(2z-1)r(e^{\i\theta}+y\i)}\right|}{\left|\sinh(r(e^{\i\theta}+y\i))\right|}\,dy
  \\
  &\le
  \frac{2re^{2r\Re\left(ze^{\i\theta}\right)-r\cos\theta}}
       {(r\cos\theta)^m\left(e^{r\cos\theta}-e^{-r\cos\theta}\right)}
  \times
  \int_{0}^{1}e^{-2ry\Im{z}}\,dy
  \\
  &=
  \frac{2e^{2r\Re\left(ze^{\i\theta}\right)-2r\cos\theta}}
       {r^{m-1}\cos^m\theta\left(1-e^{-2r\cos\theta}\right)}
  \times
  \int_{0}^{1}e^{-2ry\Im{z}}\,dy
  \\
  &\to0\quad(r\to\infty)
\end{split}
\end{equation*}
since $\Re(ze^{\i\theta})<\cos\theta$, noting that the last integral becomes either $1$ (if $z$ is real) or $\frac{1-e^{-2r\Im{z}}}{2r\Im{z}}$ (otherwise).
\par
Similarly, putting $x=r(-e^{\i\theta}+y\i)$, we have
\begin{equation*}
\begin{split}
  \left|
    \int_{\overline{U}_r^{-}}\frac{e^{(2z-1)x}}{x^m\sinh(x)}\,dx
  \right|
  &\le
  \frac{1}{(r\cos\theta)^m}
  \left|
    \int_{0}^{1}\frac{e^{-(2z-1)r(e^{\i\theta}-y\i)}\times r\i}{\sinh(r(e^{\i\theta}-y\i))}\,dy
  \right|
  \\
  &\le
  \frac{1}{(r\cos\theta)^m}
  \int_{0}^{1}
  \frac{r\left|e^{-(2z-1)r(e^{\i\theta}-y\i)}\right|}{\left|\sinh(r(e^{\i\theta}-y\i))\right|}\,dy
  \\
  &\le
  \frac{2re^{-2r\Re\left(ze^{\i\theta}\right)+r\cos\theta}}
       {(r\cos\theta)^m\left(e^{r\cos\theta}-e^{-r\cos\theta}\right)}
  \times
  \int_{0}^{1}e^{-2ry\Im{z}}\,dy
  \\
  &=
  \frac{2e^{-2r\Re\left(ze^{\i\theta}\right)}}
       {r^{m-1}\cos^m\theta\left(1-e^{-2r\cos\theta}\right)}
  \times
  \int_{0}^{1}e^{-2ry\Im{z}}\,dy
  \\
  &\to0\quad(r\to\infty)
\end{split}
\end{equation*}
since $0<\Re(ze^{\i\theta})$.
Therefore from \eqref{eq:L1_residue_positive} we have
\begin{equation*}
  \lim_{r\to\infty}
  \int_{C_{\theta}^{r}}\frac{e^{(2z-1)x}}{x\sinh(x)}\,dx
  =
  2
  \lim_{r\to\infty}
  \sum_{k=1}^{\lfloor{r}\rfloor}\frac{e^{2kz\pi\i}}{k},
\end{equation*}
which converges to $-2\log(1-e^{2\pi\i z})$ as $r\to\infty$ when $\left|e^{2z\pi\i}\right|<1$, or $e^{2z\pi\i}\ne1$ ($z\in\R$), that is, when $\Im{z}\ge0$ (Recall that we assume $0<\Re(ze^{\i\theta})<\cos\theta$).
We also have
\begin{equation*}
  \lim_{r\to\infty}
  \int_{C_{\theta}^{r}}\frac{e^{(2z-1)x}}{x^2\sinh(x)}\,dx
  =
  -\frac{2\i}{\pi}
  \lim_{r\to\infty}
  \sum_{k=1}^{\lfloor{r}\rfloor}\frac{e^{2kz\pi\i}}{k^2},
\end{equation*}
which converges to $-\frac{2\i}{\pi}\Li_2(e^{2\pi\i z})$ as $r\to\infty$ when $\Im{z}\ge0$ from Lemma~\ref{lem:Li_2_convergence} below.
\par
This completes the case where $\Im{z}\ge0$.
\par
Next we assume that $\Im{z}<0$.
\par
For $r>1$, let $\underline{U}_{r}^{\pm}$ be the vertical segment connecting $\pm re^{\i\theta}$ and $\pm re^{\i\theta}-r\i$, and $\underline{H}_{r}$ be the segment connecting $-re^{\i\theta}-r\i$ and $re^{\i\theta}-r\i$.
We orient $\underline{U}_{r}^{\pm}$ upward and $\underline{H}_{r}$ from left to right.
Note that the distance between the origin and the line containing $\underline{U}_{r}^{\pm}$ is $r\cos\theta$, and that the distance between the origin and the line containing $\underline{H}_{r}$ is also $r\cos\theta$.
\par
For $m=1,2$, we have
\begin{equation}\label{eq:L1_residue_negative}
\begin{split}
  &-\int_{C_{\theta}^{r}}\frac{e^{(2z-1)x}}{x^m\sinh(x)}\,dx
  +
  \int_{\underline{H}_{r}}\frac{e^{(2z-1)x}}{x^m\sinh(x)}\,dx
  \\
  &+
  \int_{\underline{U}_{r}^{+}}\frac{e^{(2z-1)x}}{x^m\sinh(x)}\,dx
  -
  \int_{\underline{U}_{r}^{-}}\frac{e^{(2z-1)x}}{x^m\sinh(x)}\,dx
  \\
  =&
  2\pi\i
  \sum_{k=0}^{\lfloor{r}\rfloor}
  \Res\left(\frac{e^{(2z-1)x}}{x^m\sinh(x)};x=-k\pi\i\right).
\end{split}
\end{equation}
Since the order of the pole $x=-k\pi\i$ of $\frac{e^{(2z-1)}}{x^m\sinh(x)}$ ($m=1,2$) for $k=1,2,3,\dots$ is one, we have
\begin{align*}
  \Res\left(\frac{e^{(2z-1)x}}{x\sinh(x)};x=-k\pi\i\right)
  =
  \lim_{x\to-k\pi\i}\frac{(x+k\pi\i)e^{(2z-1)x}}{x\sinh(x)}
  =
  \frac{-e^{-2k\pi\i}}{k\pi\i}
  \\
  \intertext{and}
  \Res\left(\frac{e^{(2z-1)x}}{x^2\sinh(x)};x=-k\pi\i\right)
  =
  \lim_{x\to-k\i}\frac{(x+k\i)e^{(2z-1)x}}{x^2\sinh(x)}
  =
  \frac{-e^{-2k\pi\i}}{k^2\pi^2}.
\end{align*}
Since $e^{(2z-1)x}=1+(2z-1)x+\frac{(2z-1)^2x^2}{2}+\cdots$ and $\frac{1}{\sinh(x)}=\frac{1}{x}-\frac{x}{6}+\cdots$, we have
\begin{align*}
  \Res\left(\frac{e^{(2z-1)x}}{x\sinh(x)};x=0\right)
  &=
  2z-1
  \\
  \intertext{and}
  \Res\left(\frac{e^{(2z-1)x}}{x^2\sinh(x)};x=0\right)
  &=
  2z^2-2z+\frac{1}{3}.
\end{align*}
\par
We can prove the integrals along $\underline{H}_r$ and $\underline{U}_{r}^{\pm}$ converge to zero as $r\to\infty$ in similar ways to the cases of $\overline{H}_r$ and $\overline{U}_{r}^{\pm}$.
\par
Putting $x=ye^{\i\theta}-r\i$ and assuming $r>1$, we have
\begin{equation*}
\begin{split}
  &\left|\int_{\underline{H}_{r}}\frac{e^{(2z-1)x}}{x^m\sinh(x)}\,dx\right|
  \\
  \le&
  \frac{1}{(r\cos\theta)^m}
  \left|
    \int_{-r}^{r}
    \frac{e^{(2z-1)(ye^{\i\theta}-r\i)}\times e^{\i\theta}}
         {\sinh(ye^{\i\theta}-r\i)}\,dy
  \right|
  \\
  \le&
  \frac{e^{2r\Im{z}}}{(r\cos\theta)^m}
  \int_{-r}^{r}
  \frac{e^{y\Re\left((2z-1)e^{\i\theta}\right)}}
       {\left|\sinh(ye^{\i\theta}-r\i)\right|}\,dy
  \\
  &\text{\small
    ($M
    :=
    \max_{-1\le y\le1}
    \frac{e^{y\Re\left((2z-1)e^{\i\theta}\right)}}{\left|\sinh(ye^{\i\theta}-r\i)\right|}>0$)}
  \\
  \le&
  \frac{e^{2r\Im{z}}}{(r\cos\theta)^m}
  \\
  &\quad\times
  \left(
    2M
    +
    \int_{-r}^{-1}
    \frac{2e^{2y\Re\left(ze^{\i\theta}\right)-y\cos\theta}}{e^{-y\cos\theta}-e^{y\cos\theta}}\,dy
    +
    \int_{1}^{r}
    \frac{2e^{2y\Re\left(ze^{\i\theta}\right)-y\cos\theta}}{e^{y\cos\theta}-e^{-y\cos\theta}}\,dy
  \right),
\end{split}
\end{equation*}
which converges to zero as $r\to\infty$.
\par
Putting $x=r(e^{\i\theta}-y\i)$, we have
\begin{equation*}
\begin{split}
  \left|\int_{\underline{U}_r^{+}}\frac{e^{(2z-1)x}}{x^m\sinh(x)}\,dx\right|
  &\le
  \frac{1}{(r\cos\theta)^m}
  \left|
    \int_{0}^{1}
    \frac{e^{(2z-1)r(e^{\i\theta}-y\i)}\times(-r\i)}
         {\sinh(r(e^{\i\theta}-y\i))}\,dy
  \right|
  \\
  &\le
  \frac{2re^{2r\Re\left(ze^{\i\theta}\right)-r\cos\theta}}
       {(r\cos\theta)^m(e^{r\cos\theta}-e^{-r\cos\theta})}
  \int_{0}^{1}e^{2ry\Im{z}}\,dy
  \\
  &=
  \frac{2e^{2r\Re\left(ze^{\i\theta}\right)-2r\cos\theta}}
       {r^{m-1}\cos^m\theta(1-e^{-2r\cos\theta})}
  \times
  \int_{0}^{1}e^{2ry\Im{z}}\,dy,
\end{split}
\end{equation*}
since $0<\Re(ze^{\i\theta})<\cos\theta$, noting that the last integral becomes either $1$ (if $z$ is real) or $\frac{e^{2r\Im{z}}-1}{2r\Im{z}}$ (otherwise).
\par
Similarly, putting $x=-r(e^{\i\theta}+y\i)$, we have
\begin{equation*}
\begin{split}
  \left|
    \int_{\underline{U}_{r}^{-}}\frac{e^{(2z-1)x}}{x^m\sinh(x)}\,dx
  \right|
  &\le
  \frac{1}{(r\cos\theta)^m}
  \left|
    \int_{0}^{1}
    \frac{e^{-r(2z-1)(e^{\i\theta}+y\i)}\times(-r\i)}
         {\sinh(-r(e^{\i\theta}+y\i))}\,dy
  \right|
  \\
  &\le
  \frac{2re^{-2r\Re\left(ze^{\i\theta}\right)+r\cos\theta}}
       {(r\cos\theta)^m(e^{r\cos\theta}-e^{-r\cos\theta})}
  \int_{0}^{1}e^{2ry\Im{z}}\,dy
  \\
  &=
  \frac{2e^{-2r\Re\left(ze^{\i\theta}\right)}}
       {r^{m-1}\cos^m\theta(1-e^{-2r\cos\theta})}
  \times
  \int_{0}^{1}e^{2ry\Im{z}}\,dy
  \\
  &\to0\quad(r\to\infty)
\end{split}
\end{equation*}
since $0<\Re(ze^{\i\theta})$.
So from \eqref{eq:L1_residue_negative} we have
\begin{equation*}
\begin{split}
  \lim_{r\to\infty}
  \int_{C_{\theta}^{r}}\frac{e^{(2z-1)x}}{x\sinh(x)}\,dx
  &=
  -2\pi\i
  \lim_{r\to\infty}
  \sum_{k=0}^{\lfloor{r}\rfloor}\Res\left(\frac{e^{(2z-1)x}}{x\sinh(x)};x=-k\pi\i\right).
  \\
  &=
  -2\pi\i(2z-1)
  +
  2\lim_{r\to\infty}\sum_{k=1}^{\lfloor{r}\rfloor}\frac{e^{-2kz\pi\i}}{k}.
\end{split}
\end{equation*}
Since this series converges to $-\log\left(1-e^{-2z\pi\i}\right)$ as $r\to\infty$ if $\Im{z}<0$ from Lemma~\ref{lem:Li_2_convergence}, we finally have
\begin{equation*}
  \int_{C_{\theta}}
  \frac{e^{(2z-1)x}}{x\sinh(x)}\,dt
  =
  -2\pi\i(2z-1)-2\log\left(1-e^{-2z\pi\i}\right),
\end{equation*}
completing the proof when $\Im{z}<0$.
\par
Similarly, we have
\begin{equation*}
\begin{split}
  \lim_{r\to\infty}
  \int_{C_{\theta}^{r}}\frac{e^{(2z-1)x}}{x^2\sinh(x)}\,dx
  &=
  -2\pi\i
  \lim_{r\to\infty}
  \sum_{k=0}^{\lfloor{r}\rfloor}\Res\left(\frac{e^{(2z-1)x}}{x^2\sinh(x)};x=-k\pi\i\right).
  \\
  &=
  -2\pi\i\left(2z^2-2z+\frac{1}{3}\right)
  +
  \frac{2\i}{\pi}\lim_{r\to\infty}\sum_{k=1}^{\lfloor{r}\rfloor}\frac{e^{-2kz\pi\i}}{k^2}.
\end{split}
\end{equation*}
The series converges to $\Li_2\left(e^{-2z\pi\i}\right)$ and so
\begin{equation*}
  \int_{C_{\theta}}
  \frac{e^{(2z-1)x}}{x^2\sinh(x)}\,dt
  =
  -2\pi\i\left(2z^2-2z+\frac{1}{3}\right)+\frac{2\i}{\pi}\Li_2\left(e^{-2z\pi\i}\right),
\end{equation*}
completing the proof when $\Im{z}<0$.
\end{proof}
We give a proof for the following well-known lemma.
\begin{lem}\label{lem:Li_2_convergence}
For a complex number $w$ with $|w|\le1$, the series $\sum_{k=1}^{\infty}\frac{w^k}{k^2}$ converges to $\Li_2(w)$.
Here we use $\Li_2(w):=-\int_{0}^{w}\frac{\log(1-t)}{t}\,dt$ as the definition of the dilogarithm.
\end{lem}
\begin{proof}
Put $a_k:=\frac{1}{k^2}$.
\par
Then we have
\begin{equation*}
  \left|\frac{a_{k+1}}{a_{k}}\right|
  =
  \left(\frac{k}{k+1}\right)^2
  \to
  1
\end{equation*}
as $k\to\infty$.
Therefore from d'Alembert's ratio test, the radius of convergence of the power series $\sum_{k=1}^{\infty}a_kw^k$ is $1$.
So we can differentiate it term by term if $|w|<1$, and we obtain
\begin{equation*}
  \frac{d}{d\,w}
  \left(\sum_{k=1}^{\infty}\frac{w^k}{k^2}\right)
  =
  \sum_{k=1}^{\infty}\frac{w^{k-1}}{k}
  =
  -\frac{\log(1-w)}{w}.
\end{equation*}
Therefore we conclude that $\sum_{k=1}^{\infty}\frac{w^k}{k^2}=-\int_{0}^{w}\frac{\log(1-t)}{t}\,dt$.
\par
If $w=1$, the series $\sum_{k=1}^{\infty}a_k$ converges by the integral test.
\par
Finally, we assume that $|w|=1$ with $w\ne1$, and apply Abel's test.
Since the sequence $\{a_k\}$ is positive and monotonically decreasing with $\lim_{k\to\infty}a_k=0$, the power series $\sum_{k=1}^{\infty}a_kw^k$ converges if $|w|=1$ ($w\ne1$).
We can also apply Abel's theorem to conclude that $\sum_{k=1}^{\infty}a_kz^k$ converges to $\sum_{k=1}^{\infty}a_kw^k$ provided that $z$ approaches $w$ along the radius.
Note that this includes the case $w=1$.
Therefore we also conclude that $\sum_{k=1}^{\infty}\frac{w^k}{k^2}=-\int_{0}^{w}\frac{\log(1-t)}{t}\,dt$ for the case $|w|=1$ by choosing the integral path as the radius connecting $0$ and $w$.
\end{proof}
\par
We prove that $\frac{1}{N}T_{N}(z)$ uniformly converges to $\frac{1}{\xi}\L_{2}(z)$.
\begin{proof}[Proof of Proposition~\ref{prop:T_N_L_2}]
We have
\begin{equation*}
\begin{split}
  \left|
    T_N(z)
    -
    \frac{N}{\xi}\L_2(z)
  \right|
  &=
  \frac{1}{4}
  \left|
    \int_{C_{\theta}}
    \left(
      \frac{e^{(2z-1)x}}{x\sinh(x)\sinh(\gamma x)}
      -
      \frac{e^{(2z-1)x}}{\gamma x^2\sinh(x)}
    \right)
    \,dx
  \right|
  \\
  &=
  \frac{1}{4}
  \left|
    \int_{C_{\theta}}
    \frac{e^{(2z-1)x}}{\gamma x^2\sinh(x)}
    \left(
      \frac{\gamma x}{\sinh(\gamma x)}-1
    \right)
    \,dx
  \right|
  \\
  &\le
  \frac{\pi N}{2|\xi|}
  \int_{C_{\theta}}
  \left|
    \frac{e^{(2z-1)x}}{x^2\sinh(x)}
  \right|
  \left|
    \frac{\gamma x}{\sinh(\gamma x)}-1
  \right|
  \,dx.
\end{split}
\end{equation*}
Since $\frac{\gamma x}{\sinh(\gamma x)}=1-\frac{(\gamma x)^2}{6}+O((\gamma x)^4)$, $\left|\frac{\gamma x}{\sinh(\gamma x)}-1\right|<\frac{C|x|^2}{N^2}$ for a positive constant $C$ since $\gamma=\frac{\xi}{2N\pi\i}$.
So we have
\begin{equation*}
  \left|
    T_N(z)
    -
    \frac{N}{\xi}\L_2(z)
  \right|
  \le
  \frac{C'}{N}
  \int_{C_{\theta}}
  \left|\frac{e^{(2z-1)x}}{\sinh(x)}\right|\,dx,
\end{equation*}
where $C':=\frac{C\pi}{2|\xi|}$.
Put
\begin{align*}
  I_{+}
  &:=
  \lim_{r\to\infty}
  \int_{e^{\i\theta}}^{re^{\i\theta}}
  \left|\frac{e^{(2z-1)x}}{\sinh(x)}\right|\,dx,
  \\
  I_{-}
  &:=
  \lim_{r\to\infty}
  \int_{-re^{\i\theta}}^{-e^{\i\theta}}
  \left|\frac{e^{(2z-1)x}}{\sinh(x)}\right|\,dx,
  \\
  I_{0}
  &:=
  \int_{|x|=1,\theta\le\arg{x}\le\pi+\theta}
  \left|\frac{e^{(2z-1)x}}{\sinh(x)}\right|\,dx.
\end{align*}
Putting $x:=e^{\i\theta}y$, we have
\begin{equation*}
\begin{split}
  I_{+}
  &=
  \int_{1}^{\infty}
  \frac{\left|e^{(2z-1)ye^{\theta\i}}\right|}{|\sinh(ye^{\i\theta})|}
  \,dy
  \\
  &\le
  \int_{1}^{\infty}
  \frac{2e^{2y\Re(ze^{\theta\i})-y\cos\theta}}{e^{y\cos\theta}-e^{-y\cos\theta}}
  \,dy
  \\
  &=
  \int_{1}^{\infty}
  \frac{2e^{2y\left(\Re(ze^{\theta\i})-\cos\theta\right)}}{1-e^{-2y\cos\theta}}
  \,dy
  \\
  &\le
  \frac{2}{1-e^{-2\cos\theta}}
  \int_{1}^{\infty}
  e^{-2y\nu}\,dy
  \\
  &=
  \frac{e^{-2\nu}}{(1-e^{-2\cos\theta})\nu}.
\end{split}
\end{equation*}
Here we use the assumption $\Re(ze^{\theta\i})\le\cos\theta-\nu$.
\par
Similarly, we have
\begin{equation*}
\begin{split}
  I_{-}
  &=
  \int_{-\infty}^{-1}
  \frac{\left|e^{(2z-1)ye^{\theta\i}}\right|}{|\sinh(ye^{\i\theta})|}
  \,dy
  \\
  &\le
  \int_{-\infty}^{-1}
  \frac{2e^{2y\Re(ze^{\theta\i})-y\cos\theta}}{e^{-y\cos\theta}-e^{y\cos\theta}}
  \,dy
  \\
  &=
  \int_{-\infty}^{-1}
  \frac{2e^{2y\Re(ze^{\theta\i})}}{1-e^{2y\cos\theta}}
  \,dy
  \\
  &\le
  \frac{2}{1-e^{-2\cos\theta}}
  \int_{-\infty}^{-1}
  e^{2y\nu}\,dy
  \\
  &=
  \frac{e^{2\nu}}{(1-e^{-2\cos\theta})\nu}.
\end{split}
\end{equation*}
Here we use the assumption $\Re(ze^{\theta\i})\ge\nu$.
\par
Finally, putting $x=e^{\tau\i}$ ($\theta\le\tau\le\theta+\pi$) and $L:=\min_{\theta\le\tau\le\theta+\pi}|\sinh(e^{\i\tau})|>0$, we have
\begin{equation*}
\begin{split}
  I_{0}
  &=
  \int_{\theta}^{\theta+\pi}
  \frac{\left|e^{(2z-1)e^{\tau\i}}\right|}{|\sinh(e^{\i\tau})|}
  |\i e^{\tau\i}|\,d\tau
  \\
  &\le
  \frac{1}{L}
  \int_{\theta}^{\theta+\pi}
  e^{2\Re(ze^{\tau\i})-\cos\tau}
  \,d\tau
  \\
  &=
  \frac{1}{L}
  \int_{\theta}^{\theta+\pi}
  e^{(2\Re{z}-1)\cos\tau-2\sin\tau\Im{z}}
  \,d\tau,
\end{split}
\end{equation*}
which is bounded from the above because both $\Re{z}$ and $\Im{z}$ are bounded.
\par
Therefore we conclude that $\left|T_N(z)-\frac{N}{\xi}\L_2(z)\right|\le\frac{C''}{N}$ for some constant $C''$ that does not depend on $z$.
\end{proof}
\begin{proof}[Proof of Lemma~\ref{lem:T_special}]
Recall that $\gamma=\frac{\xi}{2N\pi\i}$.
\par
By the definition of $T_N(z)$, we have
\begin{equation*}
\begin{split}
  &
  T_{N}\left(\frac{\xi}{2\pi\i}(1+\frac{1}{2N})\right)
  -
  T_{N}\left(\frac{\xi}{2\pi\i}(1-\frac{1}{2N})\right)
  \\
  =&
  \frac{1}{4}\int_{C_{\theta}}
  \frac{e^{\left(\frac{\xi}{\pi\i}(1+\frac{1}{2N})-1\right)x}
       -e^{\left(\frac{\xi}{\pi\i}(1-\frac{1}{2N})-1\right)x}}
       {x\sinh(x)\sinh(\gamma x)}
  \,dx
  \\
  =&
  \frac{1}{2}
  \int_{C_{\theta}}\frac{e^{\left(\frac{\xi}{\pi\i}-1\right)x}}{x\sinh(x)}
  \,dx,
\end{split}
\end{equation*}
which equals $\pi\i-\xi-\log(1-e^{-\xi})$ from Lemma~\ref{lem:L1_2}.
\begin{equation*}
  \frac{\exp\left(T_{N}\left(\frac{\xi}{2\pi\i}(1+\frac{1}{2N})\right)\right)}
       {\exp\left(T_{N}\left(\frac{\xi}{2\pi\i}(1-\frac{1}{2N})\right)\right)}
  =
  \frac{1}{1-e^{\xi}}.
\end{equation*}
\end{proof}
\begin{rem}
The proof of Lemma~2.3 in \cite{Murakami:JTOP2013} is wrong, which was informed by Ka Ho Wong.
\end{rem}
\bibliography{mrabbrev,hitoshi}
\bibliographystyle{amsplain}
\end{document}